\title{Filter-Laver Measurability}
\author{Yurii Khomskii\footnote{This research was partially done whilst the author was a visiting fellow 
at the Isaac Newton Institute for Mathematical Sciences in the programme 
\emph{Mathematical, Foundational and Computational Aspects of the Higher 
Infinite} (HIF) funded by EPSRC grant EP/K032208/1.}} % \\ $\;$ \\\normalsize Kurt G\"odel Research Center (KGRC) \\ \normalsize University of Vienna}
\newcommand{\p}{\medskip \noindent}
\newcommand{\cont}{{2^{\aleph_0}}}
\newcommand{\cc}{{^\frown}}
\newcommand{\till}{{\upharpoonright}}
\newcommand{\thru}{{\uparrow}}
\newcommand{\embeds}{\lhook\joinrel\relbar\joinrel\rightarrow}
\newcommand{\G}{\boldsymbol{\Gamma}}
\newcommand{\SIGMA}{\boldsymbol{\Sigma}}
\newcommand{\DELTA}{\boldsymbol{\Delta}}
\newcommand{\PI}{\boldsymbol{\Pi}}
\newcommand{\ww}{\omega^\omega}
\newcommand{\wlw}{\omega^{<\omega}}
\newcommand{\dw}{2^\omega}
\newcommand{\dlw}{2^{<\omega}}
\newcommand{\wupw}{\omega^{\uparrow \omega}}
\newcommand{\wuw}{[\omega]^\omega}
\newcommand{\stem}{{\rm stem}}
\newcommand{\Succ}{{\rm Succ}}
\newcommand{\IP}{\mathbb{P}}
\newcommand{\B}{\mathcal{B}}
\newcommand{\F}{\mathcal{F}}
\newcommand{\ran}{{\rm ran}}
\newcommand{\dom}{{\rm dom}}
\newcommand{\s}[1]{\medskip \noindent \textbf{#1}}
\newcommand{\N}{\mathcal{N}}
\newcommand{\IL}{\mathbb{L}}
\newcommand{\ID}{\mathbb{D}}
\newcommand{\IC}{\mathbb{C}}
\newcommand{\add}{{\rm add}}
\newcommand{\cof}{{\rm cof}}
\newcommand{\bb}{\mathfrak{b}}
\newtheorem{Thm}{Theorem}[section]
\newtheorem{Lem}[Thm]{Lemma}
\newtheorem{Cor}[Thm]{Corollary}
\newtheorem{Question}[Thm]{Question}
\newtheorem{Fact}[Thm]{Fact}
\theoremstyle{definition}
\newtheorem{Def}[Thm]{Definition}
\newtheorem{Notation}[Thm]{Notation}
\newtheorem{Remark}[Thm]{Remark}
\newcommand{\I}{\mathcal{I}}
\renewcommand{\F}{{{F}}}
\newcommand{\Cof}{{\sf Cof}}
\newcommand{\Fin}{{\sf Fin}}
\newcommand{\ILF}{{\mathbb{L}_{\F}}}
\newcommand{\ILFF}{{\mathbb{L}_{{\F}^+}}}
\newcommand{\ILG}{{\mathbb{L}_{G}}}
\newcommand{\ILGG}{{\mathbb{L}_{{G}^+}}}
\begin{document}

\maketitle

\begin{abstract} We study $\sigma$-ideals and regularity properties related to the ``filter-Laver'' and ``dual-filter-Laver'' forcing partial orders. An important innovation which enables this study is a dichotomy theorem proved recently by Miller \cite{MillerMesses}. %, . We look at regularity properties and dichotomy properties in the projective hierarchy.   
\end{abstract}

\section{Introduction}

In this paper, $\F$ will always be a filter on $\omega$ (or a suitable countable set). We will use $\F^-$ to refer to the ideal of all $a \subseteq \omega$ such that $\omega \setminus a \in F$, and $\F^+$ to the collection of $a \subseteq \omega$ such that $a \notin F^-$. %We will sometimes refer to sets in $F^+$ as \emph{$\F$-stationary.} 
$\Cof$ and $\Fin$ denote  the filter of {cofinite subsets} of $\omega$ and the ideal of finite subsets of $\omega$, respectively.%$\Fin$ the corresponding ideal of finite sets.

\begin{Def} An \emph{$\F$-Laver tree} is a tree $T \subseteq \wlw$ such that for all $\sigma \in T$ extending $\stem(T)$, $\Succ_T(\sigma) \in F$. An \emph{$\F^+$-Laver-tree} is a tree $T \subseteq \wlw$ such that for all $\sigma \in T$ extending $\stem(T)$, $\Succ_T(\sigma) \in \F^+$. We use $\ILF$ and $\ILFF$ to denote the partial orders of $F$-Laver and $\F^+$-Laver trees, respectively, ordered by inclusion. \end{Def}

If $\F = \Cof$ then $\ILFF$ is the standard \emph{Laver forcing} $\IL$, and $\ILF$ is (a version of) the standard \emph{Hechler forcing} $\ID$.
Both $\ILF$ and $\ILFF$ have been used as forcing notions in the literature, see, e.g., \cite{Groszek}. As usual, the generic real added by these forcings can be defined as the limit of the stems of conditions in the generic filter. It is easy to see that in both cases, this generic real is dominating. It is also known that if $F$ is not an ultrafilter, then $\IL_F$ adds a Cohen real, and if $F$ is an ultrafilter, then $\IL_F$ adds a Cohen real if and only if $F$ is not a \emph{nowhere dense ultrafilter} (see Definition \ref{nwdultrafilter}). Moreover, $\ILF$ is $\sigma$-centered and hence satisfies the ccc, and it is known that $\ILFF$ satisfies Axiom A (see \cite[Theorem]{Groszek} and Lemma \ref{b} (3)).

\bigskip In this paper, we consider  $\sigma$-ideals and regularity properties naturally related to $\IL_F$ and $\ILFF$, and study the regularity properties for sets in the low projective hierarchy, following ideas from \cite{BrLo99, Ik10, KhomskiiThesis}. An important technical innovation is a dichotomy theorem proved recently by Miller in \cite{MillerMesses} (see Theorem \ref{dichotomy}), which allows us to simplify the  $\sigma$-ideal for $\ILFF$ when restricted to Borel sets, while having a $\SIGMA^1_2$ definition regarding the membership of Borel sets in it.

\bigskip One question may occur to the reader of this paper: why are we not considering the \emph{filter-Mathias} forcing alongside the filter-Laver forcing, when clearly the two forcing notions (and their derived $\sigma$-ideals and regularity properties) are closely related? The answer is that, although the basic results from Section 2 do indeed hold for filter-Mathias, there is no corresponding dichotomy theorem like Theorem \ref{dichotomy}. In fact, by a result of Sabok \cite{MarcinSabok}, even the $\sigma$-ideal corresponding to the \emph{standard} Mathias forcing is not a $\SIGMA^1_2$-ideal on Borel sets, implying that even in this simple case, there is no hope of a similar dichotomy theorem. It seems that in the Mathias case, a more subtle analysis is required.

\bigskip

In Section \ref{Sec2} we give the basic definitions and prove some easy properties. In Section \ref{Sec3} we present Miller's dichotomy and the corresponding $\sigma$-ideal. In Section \ref{Sec4} we study direct relationships that hold between the regularity properties regardless of the complexity of $F$, whereas in Section \ref{Sec5} we prove  stronger results under the assumption that $F$ is an analytic filter.

\section{$(\ILF)$- and $(\ILFF)$-measurable sets.} \label{Sec2}

In \cite{Ik10}, Ikegami provided a natural framework for studying $\sigma$-ideals and regularity properties related to tree-like forcing notions, generalising the concepts of \emph{meager} and \emph{Baire property}. This concept proved to be very useful in a number of circumstances, see, e.g., \cite{KhomskiiThesis, LaguzziPaper, KhomskiiLaguzzi1}.

\begin{Def} \label{a} Let $\IP$ be $\ILF$ or $\ILFF$ and let $A \subseteq \ww$.

\begin{enumerate} 
\item $A \in \N_\IP$ iff $\forall T \in \IP \: \exists S \leq T \:([S] \cap A = \varnothing)\}$. 
\item $A \in \I_\IP$ iff $A$ is contained in a countable union of sets in $\N_\IP$.
\item $A$ is \emph{$\IP$-measurable} iff $\forall T \in \IP \: \exists S \leq T \: ([S] \subseteq^* A$ or $[S] \cap A =^* \varnothing)$, where $\subseteq^*$ and $=^*$ stands for ``modulo a set in $\I_\IP$''.
\end{enumerate}
\end{Def}

\begin{Lem}  The collection $\{[T] \mid T \in \ILF\}$ forms a topology base. The resulting topology refines the standard topology and the space satisfies the Baire category theorem $($i.e., $[T] \notin \I_\ILF$ for all $T \in \IL_F)$.\end{Lem}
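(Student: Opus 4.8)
The plan is to verify the three assertions in turn, each by an elementary argument about $\F$-Laver trees.

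First, that $\{[T] \mid T \in \ILF\}$ is a base for a topology. I would check the standard criterion: given $T_1, T_2 \in \ILF$ and $x \in [T_1] \cap [T_2]$, I need some $S \in \ILF$ with $x \in [S] \subseteq [T_1] \cap [T_2]$. The natural candidate is the tree of all $\sigma \in T_1 \cap T_2$ that are comparable with $x$. Since $x$ is a branch through both trees, this $S$ is a nonempty tree, $x \in [S]$, and $[S] \subseteq [T_1] \cap [T_2]$. The only thing to confirm is that $S \in \ILF$: its stem is the longer of $\stem(T_1), \stem(T_2)$ together with the relevant initial segment of $x$; and for any $\sigma \in S$ above the stem, $\Succ_S(\sigma) = \Succ_{T_1}(\sigma) \cap \Succ_{T_2}(\sigma)$, which is an intersection of two sets in $F$ (here using that above the common stem, $\sigma$ lies above $\stem(T_i)$ in each $T_i$), hence in $F$ since $F$ is a filter. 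That the resulting topology refines the standard topology is immediate: for any basic clopen $[\sigma] = \{x : \sigma \subseteq x\}$ of $\ww$, the full subtree $\{\tau \in \wlw : \tau \text{ comparable with } \sigma\}$ is an $\F$-Laver tree (every splitting node splits fully, and $\omega \in F$) whose body is $[\sigma]$, so every standard open set is open in the new topology.

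For the Baire category theorem, the cleanest route is to show directly that $[T] \notin \I_{\ILF}$ for every $T \in \ILF$; equivalently, no countable family $\{N_n : n \in \omega\}$ with each $N_n \in \N_{\ILF}$ covers $[T]$. This is a fusion argument. Suppose $[T] \subseteq \bigcup_n N_n$. Build a decreasing sequence $T = T_0 \geq T_1 \geq T_2 \geq \cdots$ of $\F$-Laver trees together with an increasing sequence of ``committed'' nodes so that (i) $[T_{n+1}] \cap N_n = \varnothing$, which is possible because $N_n \in \N_{\ILF}$ gives, below any condition, a further condition missing $N_n$; and (ii) the construction is carried out along a fusion scheme — at stage $n$ one fixes finitely many nodes of $T_n$ up to some level, and when shrinking to $T_{n+1}$ one only modifies the tree strictly above those nodes, and crucially one never removes successors below that finite frontier, so that the stems stabilize and a genuine branch survives. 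Because each $\Succ_{T_n}(\sigma) \in F$ and $F$ is a \emph{filter} (closed under finite, indeed here just the relevant finite, intersections along the fusion), the ``fusion'' tree $T_\omega = \bigcap_n T_n$ is again an $\F$-Laver tree, in particular nonempty; any $x \in [T_\omega]$ lies in $[T]$ but in none of the $N_n$, a contradiction.

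The main obstacle is arranging the fusion in part (ii) so that $T_\omega \in \ILF$, i.e.\ that splitting into an $F$-set is preserved in the limit. The subtlety is that when we pass from $T_n$ to $T_{n+1}$ to kill $N_n$, we may shrink the successor set of some splitting node, and we must ensure each such node is touched only finitely often (otherwise its successor set could shrink to something not in $F$, or even become finite). The standard device handles this: enumerate the splitting nodes and at stage $n$ only allow modifications above the $n$-th splitting node (in a fixed enumeration respecting the tree order), so that the successor set of the $k$-th splitting node is altered at most at stages $\leq k$, hence finitely often, and its final value — a finite intersection of sets in $F$ — is still in $F$. Once this bookkeeping is set up, the rest is routine, and the lemma follows.
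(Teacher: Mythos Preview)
Your arguments for the topology-base and refinement claims are essentially the paper's, modulo a wording slip: the set of $\sigma \in T_1 \cap T_2$ that are \emph{comparable with $x$} is literally just the chain of initial segments of $x$, which is a single branch, not an $\F$-Laver tree. What you want---and what your computation $\Succ_S(\sigma) = \Succ_{T_1}(\sigma) \cap \Succ_{T_2}(\sigma)$ actually describes---is simply $S = T_1 \cap T_2$; this is exactly the paper's one-line argument.

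The substantive gap is in the Baire-category part. Your fusion scheme presupposes that when you shrink above a frontier node $\sigma$ to avoid $N_n$, the successor set at $\sigma$ remains an $F$-set (so that in the end it is a finite intersection of $F$-sets). But for $\ILF$ there is no such stem-preserving refinement: the condition $S \leq T_n \thru \sigma$ with $[S] \cap N_n = \varnothing$ supplied by the definition of $\N_{\ILF}$ may well have $\stem(S)$ strictly longer than $\sigma$, which collapses $\Succ(\sigma)$ to a singleton and destroys $F$-branching at $\sigma$ permanently. The paper in fact shows this explicitly just after Lemma~\ref{b}: the $\leq_0$-version of nowhere-dense avoidance \emph{fails} for $\ILF$ whenever $F$ is not an ultrafilter. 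So your bookkeeping does not secure $T_\omega \in \ILF$, nor even $[T_\omega] \neq \varnothing$ (the $[T_n]$ are closed but not compact). The paper's proof sidesteps fusion entirely and is much simpler: it only builds a single branch, choosing $T_{n+1} \leq T_n$ with $[T_{n+1}] \cap A_n = \varnothing$ and strictly longer stem (always possible---first avoid $A_n$, then extend the stem by one), and taking $x = \bigcup_n \stem(T_n) \in [T] \setminus \bigcup_n A_n$.
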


\begin{proof} Clearly, for all $S,T \in \ILF$ the intersection $S \cap T$ is either empty or an $\ILF$-condition. A basic open set in the standard topology trivially corresponds to a tree in $\ILF$. For the Baire category theorem, let $A_n$ be nowhere dense and, given an arbitrary $T \in \ILF$, build a sequence $T = T_0 \geq T_1  \geq T_2 \geq \dots$  with strictly increasing stems such that $[T_n] \cap A_n = \varnothing$ for all n. Then the limit of the stems is an element in $[T] \setminus \bigcup_n A_n$. \end{proof}

We use $\tau_\ILF$ to denote the topology on $\ww$ generated by $\{[T] \mid T \in \ILF\}$. Clearly $\N_\ILF$ is the collection of $\tau_\ILF$-nowhere dense sets and $\I_\ILF$ the collection of $\tau_\ILF$-meager sets. Moreover, we recall the following fact, which is true in arbitrary topologal spaces (the proof is similar to \cite[Theorem 8.29]{Kechris}): %The following standard result explains the significance of $\IP$-regularity.

\begin{Fact} Let $\mathcal{X}$ be any topological space, and $A \subseteq \mathcal{X}$. Then the following are equivalent: \begin{enumerate}
\item $A$ satisfies the Baire property.
\item For every basic  open $O$ there is a basic open $U \subseteq O$ such that $U \subseteq^* A$ or $U \cap A =^* \varnothing$, where $\subseteq^*$ and $=^*$ refer to ``modulo meager''.
%\item For every basic open $O$ there is a basic open $U \subseteq O$ such that $U \subseteq^* A$ or $U \cap A =^* \varnothing$ 
\end{enumerate} In particular, $A \subseteq \ww$ is $\ILF$-measurable iff $A$ satisfies the $\tau_\ILF$-Baire property.
\end{Fact}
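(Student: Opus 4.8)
The plan is to prove the equivalence $(1)\Leftrightarrow(2)$ for an arbitrary topological space $\mathcal X$ equipped with a fixed base of nonempty open sets — reproducing the classical argument for the usual topology, cf.\ \cite[Theorem~8.29]{Kechris} — and then to obtain the last sentence by specialising to $\mathcal X=\ww$ with base $\{[T]\mid T\in\ILF\}$, at which point Definition~\ref{a}(3) for $\IP=\ILF$ is literally condition $(2)$ (recall $S\leq T$ gives $[S]\subseteq[T]$, and conversely any basic open $[S']\subseteq[T]$ is witnessed by $S'\cap T\leq T$, while $\I_\ILF$ is by definition the ideal of $\tau_\ILF$-meager sets). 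For $(1)\Rightarrow(2)$ I would argue directly: write $A=U\triangle M$ with $U$ open and $M$ meager; given a basic open $O$, if $O\cap U\neq\varnothing$ pick a basic open $V\subseteq O\cap U$ and note $V\setminus A\subseteq M$, so $V\subseteq^*A$; if $O\cap U=\varnothing$ take $V:=O$ and note $V\cap A\subseteq M$, so $V\cap A=^*\varnothing$.

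The direction $(2)\Rightarrow(1)$ carries the content, and the single genuine obstacle is that the open set which will witness the Baire property of $A$ must be built out of possibly uncountably many basic open pieces, whereas an uncountable union of meager sets need not be meager. I would neutralise this at the outset with a Banach-category-theorem-style lemma: \emph{if $\{O_i\}_{i\in I}$ is a pairwise disjoint family of open sets and $N_i\subseteq O_i$ is meager for each $i$, then $\bigcup_{i\in I}N_i$ is meager.} This is proved first for nowhere dense $N_i$ by localisation — given a nonempty open $G$, either $G$ misses every $O_i$ (hence every $N_i$), or $G\cap O_{i_0}\neq\varnothing$ for some $i_0$, and shrinking $G$ inside $G\cap O_{i_0}$ to avoid $N_{i_0}$ already avoids all $N_i$ by disjointness — and then for meager $N_i$ by writing each as a countable union of nowhere dense sets and interchanging the two unions.

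With the lemma available I would run the standard maximal-antichain construction twice. Let $\mathcal D$ be a maximal (via Zorn) pairwise disjoint family of basic open $O$ with $O\setminus A$ meager and put $U:=\bigcup\mathcal D$; then $U$ is open and $U\setminus A$ is meager by the lemma. Let $\mathcal E$ be a maximal pairwise disjoint family of basic open $O'$ with $O'\cap U=\varnothing$ and $O'\cap A$ meager and put $W':=\bigcup\mathcal E$; then $W'\cap A$ is meager by the lemma. The key point is that $U\cup W'$ is dense: if $O$ is a nonempty basic open set with $O\cap U=\varnothing$, condition $(2)$ yields a nonempty basic open $V\subseteq O$ with $V\setminus A$ meager or $V\cap A$ meager; the first alternative is impossible, since $V$ would then be disjoint from every member of $\mathcal D$ and could be adjoined to it, contradicting maximality; so the second holds, and then maximality of $\mathcal E$ forces $V\cap W'\neq\varnothing$, hence $O\cap(U\cup W')\neq\varnothing$. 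Therefore $\mathcal X\setminus(U\cup W')$ is closed and nowhere dense, so
\[
A\setminus U\ \subseteq\ (A\cap W')\cup\bigl(\mathcal X\setminus(U\cup W')\bigr)
\]
is meager; combined with $U\setminus A$ meager this gives $A\triangle U$ meager, i.e.\ $A$ has the Baire property.

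Once the equivalence is established, the ``in particular'' is just the notational unwinding indicated in the first paragraph. The step I expect to demand the most care is the lemma above, i.e.\ the passage through possibly uncountable unions; the remainder uses only routine facts about meager sets and open subspaces (such as the fact that, for a subset of an open set, nowhere-denseness computed in the subspace agrees with nowhere-denseness in $\mathcal X$).
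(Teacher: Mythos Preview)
Your proof is correct and is precisely the classical argument the paper is alluding to: the paper does not give its own proof of this Fact, but merely remarks that ``the proof is similar to \cite[Theorem 8.29]{Kechris}'', and your Banach-category lemma together with the two maximal-antichain constructions is exactly that argument (adapted, as you do, to an arbitrary base rather than a countable one). The unwinding of the ``in particular'' clause is also correct, since for $\ILF$-trees $[S']\subseteq[T]$ implies $S'\subseteq T$ and hence $S'\leq T$.
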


 % in the topology $\tau_\ILF$. %Thus we see that the concepts from Definition \ref{a} are  familiar topological notions.
What about the dual forcing $\ILFF$? Notice that a topological approach cannot work in general: 

\begin{Lem} The collection $\{[T] \mid T \in \ILFF\}$ generates a topology base iff $F$ is an ultrafilter. \end{Lem}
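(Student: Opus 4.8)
The plan is to prove the two implications separately; the forward one is immediate, and the content is in the converse. First, suppose $F$ is an ultrafilter. Then $\F^+ = F$, because for every $a \subseteq \omega$ we have $a \in \F^+$ iff $\omega \setminus a \notin F$ iff $a \in F$, using that an ultrafilter contains exactly one of $a$ and $\omega \setminus a$. Hence $\ILFF$ and $\ILF$ are literally the same partial order, and the assertion is precisely the earlier lemma stating that $\{[T] \mid T \in \ILF\}$ is a topology base.

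For the converse, suppose $F$ is not an ultrafilter; I will produce $S, T \in \ILFF$ for which $[S] \cap [T]$ is a single point $\{x\}$. This suffices: the base property would require some $R \in \ILFF$ with $x \in [R] \subseteq [S] \cap [T] = \{x\}$, i.e.\ $[R] = \{x\}$, but every $R \in \ILFF$ has a finite stem above which the tree is not linear (otherwise $\stem(R)$ would not be a well-defined finite sequence), so $[R]$ has at least two points --- a contradiction. To build $S$ and $T$: since $F$ is not an ultrafilter there is $a \subseteq \omega$ with $a \in \F^+$ and $\omega \setminus a \in \F^+$, and we may moreover take $|a| \geq 2$ (a short combinatorial argument using that $F$ is not a principal ultrafilter at any point; automatic when $F \supseteq \Cof$, since then members of $\F^+$ are infinite). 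Fix $c \in a$, let $S$ be the tree with empty stem in which the successor set of every node equals $a$, and let $T$ be the tree with empty stem in which the successor set of every node equals $(\omega \setminus a) \cup \{c\}$. Both successor sets lie in $\F^+$ (the first is $a$, the second contains $\omega \setminus a$), so $S, T \in \ILFF$. Now $[S] = \{y \in \ww \mid \forall n \, (y(n) \in a)\}$ and $[T] = \{y \mid \forall n \, (y(n) \in (\omega \setminus a) \cup \{c\})\}$, and since $c \in a$ forces $a \cap ((\omega \setminus a) \cup \{c\}) = \{c\}$, we obtain $[S] \cap [T] = \{\langle c, c, c, \dots \rangle\}$, as required.

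The main idea --- in contrast with $\ILF$, where the intersection of two conditions is again a condition --- is that for $\ILFF$ one can shrink the intersection of two conditions all the way down to a single point, built out of a set $a$ witnessing the failure of the ultrafilter property. The only step requiring any real care is the remark that $a$ can be chosen with at least two elements, so that $S$ genuinely branches at its stem and is a legitimate $\ILFF$-condition; everything else is routine.
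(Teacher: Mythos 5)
Your proof is correct and takes essentially the same approach as the paper: for the ultrafilter direction you note $\F^+ = F$ so $\ILFF = \ILF$ and the earlier base lemma applies, and for the converse the paper likewise exhibits two conditions with constant successor sets ($Z \cup \{0\}$ and $(\omega \setminus Z) \cup \{0\}$) whose cones meet in a single point, which no $\ILFF$-cone can fit inside. Your additional care about choosing $|a| \geq 2$ and about single-branch trees not being legitimate conditions just spells out details the paper leaves implicit.
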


\begin{proof}   If  $F$ is not an ultrafilter, fix $Z$ such that $Z \in \F^+$ and $(\omega \setminus Z) \in \F^+$ and consider trees $S, T \in \ILFF$ defined so that $\forall \sigma \in S \: (\Succ_S(\sigma) = Z \cup \{0\})$ and $\forall \tau \in T \: (\Succ_T(\tau) = (\omega \setminus Z) \cup \{0\})$. %Clearly $[S] \cap [T] = \{\vec{0}\}$ (where $\vec{0}$ denotes the constant-$0$-function). %, so  $\{[T] \mid T \in \ILFF\}$ is not closed under intersections. 
\end{proof} 

Instead, to study $\ILFF$, we rely on combinatorial methods familiar from Laver forcing. %: fusion and pure decision arguments. %Since most of the following results are a direct generalization, we will only give sketches of proofs. 
For every $n$, define $\leq_n$ by: $$S \leq_n T \; : \Leftrightarrow \; S \leq T \text{ and } S \cap \omega^{\leq k+n} = T \cap \omega^{\leq k+n},$$ where $k = |\stem(T)|$. %In particular, $S \leq_0 T$ just means that $S$ and $T$ have the same stem. 
If $T_0 \geq_0 T_1 \geq_1 \dots$ is a decreasing sequence then $T := \bigcap_n{T_n} \in \ILFF$ and $T \leq T_n$ for every $n$. %The following lemma is similar to the usual Laver forcing. 

\begin{Lem} \label{b} Let $F$ be a filter on $\omega$. Then: \begin{enumerate}

\item $\ILFF$ has \emph{pure decision}, i.e., for every $\phi$ and every $T \in \ILFF$, there is $S \leq_0 T$ such that $S \Vdash \phi$ or $S \Vdash \lnot \phi$.

\item For all $A \subseteq \ww$, the following are equivalent:
\begin{enumerate}
\item $A \in \N_\ILFF$,
\item $\forall T \in \ILFF \: \exists S \leq_0 T \:( [S] \cap A = \varnothing)$.
\end{enumerate}
\item $\N_\ILFF = \: \I_\ILFF$.
\item For all $A \subseteq \ww$, the following are equivalent:
\begin{enumerate}
\item $A$ is $(\ILFF)$-measurable,
\item $\forall T \in \ILFF \: \exists S \leq T \: ([S] \subseteq A$ or $[S] \cap A = \varnothing)$,
\item $\forall T \in \ILFF \: \exists S \leq_0 T \: ([S] \subseteq A$ or $[S] \cap A = \varnothing)$. \end{enumerate}
\item The collection of $(\ILFF)$-measurable sets forms a $\sigma$-algebra. \end{enumerate} \end{Lem}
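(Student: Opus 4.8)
All five parts rest on the Laver-style fusion machinery attached to the orderings $\leq_n$, plus one elementary fact about $\F^+$: since $\F^-$ is an ideal, $A\cup B\in\F^+$ forces $A\in\F^+$ or $B\in\F^+$, and subtracting an $\F^-$-set from an $\F^+$-set leaves an $\F^+$-set; these replace the usual filter properties in the amalgamation steps. (Below, for $\sigma\in T$ extending $\stem(T)$, $T_\sigma$ denotes the subtree of $T$ with stem $\sigma$.) For (1), fix $\phi$, $T$, and call $\sigma\in T$ (extending $\stem(T)$) \emph{good} if some $S\leq T_\sigma$ with $\stem(S)=\sigma$ decides $\phi$. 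The amalgamation lemma is: if $\{i\in\Succ_T(\sigma):\sigma\cc i\text{ is good}\}\in\F^+$, then $\sigma$ is good --- take witnesses $S^i$ above each good $\sigma\cc i$, split these successors according to whether $S^i$ forces $\phi$ or $\lnot\phi$, keep the half still in $\F^+$, and glue the corresponding $S^i$'s at $\sigma$. Assigning ordinal ranks via this closure rule, every ``ranked'' node is good (induction on rank). Finally $\stem(T)$ is ranked: otherwise, using the contrapositive of the amalgamation lemma to discard an $\F^-$-set of successors at each step, build $S\leq_0 T$ with $\stem(S)=\stem(T)$ none of whose nodes above the stem is good; pick any $S'\leq S$ deciding $\phi$ and observe that $S'$ witnesses that $\stem(S')$, a node of $S$, is good --- contradiction. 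Hence some $S\leq_0 T$ decides $\phi$.

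\textbf{Parts (2)--(3).} For (2) only (a)$\Rightarrow$(b) needs work. Given $A\in\N_\ILFF$ and $T$, call $\sigma\in T$ (extending $\stem(T)$) \emph{clear} if some $S\leq T_\sigma$ with $\stem(S)=\sigma$ has $[S]\cap A=\varnothing$; as in (1), $\F^+$-many clear successors make $\sigma$ clear, so $\sigma$ not clear implies $\F^+$-many successors not clear. If $\stem(T)$ were not clear, build $S\leq_0 T$ with $\stem(S)=\stem(T)$ in which no node above the stem is clear; then $A\in\N_\ILFF$ applied to $S$ gives $S'\leq S$ with $[S']\cap A=\varnothing$, making $\stem(S')$ a clear node of $S$ --- contradiction. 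So $\stem(T)$ is clear, which is (b). For (3), $\N_\ILFF\subseteq\I_\ILFF$ is trivial, and since $\N_\ILFF$ is closed under subsets it suffices to show $\bigcup_n A_n\in\N_\ILFF$ when each $A_n\in\N_\ILFF$. Given $T$ with $k=|\stem(T)|$, build a fusion $T=T_0\geq_0 T_1\geq_1\cdots$ in which, at stage $n$, each node $\sigma$ of $T_n$ at level $k+n$ has its subtree $(T_n)_\sigma$ replaced --- using (2) --- by some $S^\sigma\leq(T_n)_\sigma$ with stem $\sigma$ and $[S^\sigma]\cap A_n=\varnothing$; since this alters $T_n$ only above level $k+n$, $T_{n+1}\leq_n T_n$. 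The limit $S=\bigcap_n T_n\in\ILFF$ satisfies $S\leq T$, and any $x\in[S]$ runs through the node $x\till(k+n)$ above which $S$ avoids $A_n$; so $[S]\cap\bigcup_n A_n=\varnothing$.

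\textbf{Parts (4)--(5).} In (4), (b)$\Rightarrow$(a) and (c)$\Rightarrow$(b) hold because $\varnothing\in\I_\ILFF$ and $\leq_0$ implies $\leq$. For (a)$\Rightarrow$(c): given $T$, take $S\leq T$ with, say, $[S]\subseteq^* A$, i.e.\ $[S]\setminus A\in\I_\ILFF=\N_\ILFF$ by (3); by (2) there is $S'\leq_0 S$ with $[S']\cap([S]\setminus A)=\varnothing$, hence $[S']\subseteq A$, and $S'\leq_0 T$ as well; the case $[S]\cap A=^*\varnothing$ is symmetric. For (5), use characterisation (b): closure under complement is immediate (``$[S]\subseteq A$'' and ``$[S]\cap(\ww\setminus A)=\varnothing$'' are literally the same condition, and vice versa), and $\varnothing,\ww$ are measurable. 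For $A=\bigcup_n A_n$ with each $A_n$ measurable, and a given $T$, rerun the fusion of (3), but at stage $n$ and node $\sigma$ apply measurability of $A_n$ to $(T_n)_\sigma$: if the alternative $[S^\sigma]\subseteq A_n$ ever occurs, return that $S^\sigma$ (then $[S^\sigma]\subseteq A$ and we are done); otherwise every instance is $[S^\sigma]\cap A_n=\varnothing$ and the fusion limit $S$ has $[S]\cap A=\varnothing$. Either way (b) holds for $A$.

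\textbf{Main obstacle.} The delicate point is the bookkeeping in the fusions of (3) and (5): one must synchronise ``treat $A_n$ at stage $n$ on the nodes of level $k+n$'' with the definition of $\leq_n$ so that every modification happens strictly above the segment about to be frozen, verify that each amalgamated tree is a genuine $\F^+$-Laver tree (every node above its stem retains an $\F^+$ successor set), and check that the $\leq_n$-decreasing limit lies in $\ILFF$ below $T$ (using the fact recorded just before the lemma). A minor additional check is that the amalgamation steps in (1) and (2) really use only the binary-splitting property of $\F^+$ noted above, rather than any honest filter property.
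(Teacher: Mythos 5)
Your parts (1)--(3) follow essentially the paper's own route: the good/bad amalgamation (using only that an $\F^+$-set cannot be written as the union of two $\F^-$-sets, and that removing an $\F^-$-set from an $\F^+$-set leaves an $\F^+$-set), then a tree of bad nodes contradicted by a deciding/avoiding condition below it, and the level-by-level fusion for (3); these arguments are correct and match the paper. The genuine gap is in (4), in your step (a)$\Rightarrow$(c). You take $S\leq T$ with $[S]\subseteq^* A$, shrink via (2)/(3) to $S'\leq_0 S$ with $[S']\subseteq A$, and then assert that ``$S'\leq_0 T$ as well''. But $\leq_0$, as you yourself use it everywhere else (and as is needed for clause (c) to say anything beyond clause (b)), means ``same stem'': you get $\stem(S')=\stem(S)$, and $\stem(S)$ will in general properly extend $\stem(T)$, so $S'\leq_0 T$ fails. (If one reads the displayed definition of $\leq_0$ completely literally, $S\leq_0 T$ collapses to $S\leq T$, which would make your assertion vacuously true --- but it would also make (1) and clause (c) contentless, so under that reading you have not proved the intended statement either.) What your argument actually establishes is (a)$\Rightarrow$(b).

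The passage from (b) to (c) --- a homogeneous condition with the \emph{same stem} as $T$ --- does not follow from bookkeeping of $\leq_0$; it requires running the amalgamation of (1) once more, with the two alternatives ``$[S]\subseteq A$'' and ``$[S]\cap A=\varnothing$'' in the roles of $\phi$ and $\lnot\phi$: call $\sigma$ good if some $S\leq T\thru\sigma$ with stem $\sigma$ realises one of the two alternatives; if $\F^+$-many successors of $\sigma$ are good, split them according to which alternative their witnesses realise, keep the $\F^+$-half and glue, so $\sigma$ is good; if $\stem(T)$ were not good, build the tree of non-good nodes and contradict (b) applied to it. This is exactly what the paper indicates (``for (b)$\Rightarrow$(c), use the same argument as in (1)''). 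Note finally that your proof of (5) invokes, at each node $\sigma$ of the fusion, a homogeneous condition below $(T_n)\thru\sigma$ with stem exactly $\sigma$ --- i.e., precisely clause (c) applied to that condition --- so as written (5) also rests on the missing step until (c) is proved properly.
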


\begin{proof} Since many of the arguments here are similar, we prove the first assertion and only sketch the others. % the others are left to the reader. %and leave the rest to the reader.
 $\;$ \

 \begin{enumerate}
 \item Fix $\phi$ and $T$ and let $u := \stem(T)$. For $\sigma \in T$ extending $u$, say: \begin{itemize}
\item $\sigma$ is \emph{positive-good} if $\exists S \leq_0 T \thru \sigma$ such that $S \Vdash \phi$,
\item $\sigma$ is \emph{negative-good} if $\exists S \leq_0 T \thru \sigma$ such that $S \Vdash \lnot \phi$,
\item $\sigma$ is \emph{bad} if neither of the above holds.\end{itemize}
We claim that $u$ is good, completing the proof. Assume that $u$ is bad. Partition  $\Succ_T(u)$ into $Z_0, Z_1$ and $Z_2$ by setting $n \in Z_0$ iff $u \cc \left<n\right>$ is positive-good, $n \in Z_1$ iff $u \cc \left<n\right>$ is negative-good, and $n \in Z_2$ iff  $u \cc \left<n\right>$ is bad. One of the three components must be in $\F^+$. But if it is $Z_0$ then $S:= \bigcup_{n \in Z_0} T \thru (u \cc \left<n\right>) \leq_0 T$ and $S \Vdash \phi$, thus $u$ is positive-good contrary to assumption; likewise, if $Z_1$ is in $\F^+$ then $u$ is negative-good contrary to assumption. Hence, $Z_2$ must be in $\F^+$. Now, for each $n \in Z_2$, use the same argument to obtain an $\F^+$-positive set $Z_{2,2}$ of successors of $u \cc\left<n\right>$ such that for all $m \in Z_{2,2}$, $u \cc \left<n,m\right>$ is bad, and so on. 

\p This way we construct a tree $T^* \leq T$ such that all $\sigma \in T^*$ are bad. But there is a $T^{**} \leq T^*$ deciding $\phi$, which means that $\stem(T^{**})$ is either positive-good or negative-good, leading to a contradiction. % \qedhere

 \item Let $A \in \N_\ILFF$, fix $T$, and let $u = \stem(T)$. For $\sigma \in T$ extending $u$, say that $\sigma$ is \emph{good} if $\exists S \leq_0 T \thru \sigma$ such that $[S] \cap A = \varnothing$, and $\sigma$ is \emph{bad} otherwise. By the same argument as above we prove that $u$ is good.

 \item Suppose  $A_n \in \N_\ILFF$ for all $n$. %,  we must show that $A := \bigcup_n A_n \in \N_\ILFF$. 
 Fix $T \in \ILFF$. Clearly it is enough to produce a fusion sequence $T = T_0 \geq_0 T_1 \geq_1 \dots$ such that for all $n$, $[T_n] \cap A_n = \varnothing$. So suppose we have constructed $T_n$. Let $\{u_i \mid i<\omega\}$ enumerate all the nodes in $T_n$ of length $|\stem(T_n) + n|$. For each $u_i$, use $(2)$ to find $S_i \leq_0 T_n \thru u_i$ with $[S_i] \cap A_{n+1} = \varnothing$. Let $T_{n+1} := \bigcup_i S_i$. Then clearly $T_{n+1} \leq_n T_n$ and $[T_{n+1}] \cap A_{n+1} = \varnothing$ as required.

 %(Notice that we have proved a slightly stronger form of Axiom A for $\ILFF$.)

 \item For $(a) \Rightarrow (b)$, use the fact that $\I_\ILFF = \; \N_\ILFF$. For $(b) \Rightarrow (c)$, use the same argument as in $(1)$. %The details are left to the reader.

 \item It suffices to show closure under countable unions. Suppose $A_n$ is $\ILFF$-measurable and fix $T \in \ILFF$. If for one $n$, there is $S \leq T$ with $[S] \subseteq A_n$ then we are done. Otherwise (using the equivalence from (4)) for every $n$, there is $S \leq T$ such that $[S] \cap A_n = \varnothing$. Then an argument like in (3) shows that there is $S \leq T$ such that $[S] \cap \bigcup_n A_n = \varnothing$. \qedhere
\end{enumerate}
\end{proof}

%We will generally use the notation $\I_\ILFF$ to refer to the ideal from (3) above, and take the statement from (4)(b) as the definition of {$\ILFF$-measurability.} 

\begin{Remark} \label{strongproper} Note that an argument like in (4) above in fact shows that $\ILFF$ satisfies a stronger form of properness, namely,  for all countable elementary models $M \prec \mathcal{H}_\theta$ and all $T \in \ILFF$, there exists $S \leq T$ such that every $x \in [S]$ is $\ILFF$-generic over $M$. \end{Remark}

%In particular, 4(b) provides a familiar simplification of $\ILFF$-measurability in terms of a Marczewski-style property. % if for all $T \in  \

Again it is interesting to ask whether any of the ``simplifications'' (1)--(4) from the above Lemma might go through for $\ILF$, too.

\begin{Lem} If we replace $\ILFF$ with $\ILF$ in Lemma \ref{b}, then  the statements (1)--(4) are all equivalent to each other, and equivalent to the statement ``$\F$ is an ultrafilter''. \end{Lem}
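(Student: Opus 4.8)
The plan is to handle the two directions of the equivalence separately. The implication ``$\F$ is an ultrafilter $\Rightarrow$ (1)--(4)'' is immediate: if $\F$ is an ultrafilter then $\F^{+}=\F$, so $\ILF$ and $\ILFF$ are literally the same partial order; hence $\N_\ILF=\N_\ILFF$, $\I_\ILF=\I_\ILFF$, the two notions of measurability coincide, and statements (1)--(4) are exactly the corresponding clauses of Lemma~\ref{b}. The real content is the converse, which I would prove in contrapositive form: if $\F$ is \emph{not} an ultrafilter then every one of (1)--(4), with $\ILFF$ replaced by $\ILF$ throughout, fails. Combined with the first implication, this shows that each of (1)--(4) is equivalent to ``$\F$ is an ultrafilter'', hence also to one another.

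So assume $\F$ is not an ultrafilter, and fix $Z\subseteq\omega$ with $Z\notin\F$ and $\omega\setminus Z\notin\F$ (such a $Z$ exists precisely because $\F$ is not an ultrafilter). The only use of this hypothesis is the following remark: if $T\in\ILF$ and $\sigma\in T$ extends $\stem(T)$, then $\Succ_T(\sigma)\in\F$, so $\Succ_T(\sigma)\not\subseteq Z$ and $\Succ_T(\sigma)\not\subseteq\omega\setminus Z$ (a member of $\F$ contained in $\omega\setminus Z$ would put $\omega\setminus Z$ into $\F$); equivalently, from every node above the stem $T$ has a successor in $Z$ and a successor outside $Z$. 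From this I would extract two facts. First, for every $m$ the set $N_m:=\{x\in\ww: x(n)\notin Z\text{ for all }n\ge m\}$ is $\tau_\ILF$-nowhere dense: it is closed in the standard topology, hence $\tau_\ILF$-closed, and it has empty $\tau_\ILF$-interior, because $[S]\subseteq N_m$ for an $S\in\ILF$ would force some splitting node $\sigma$ of $S$ with $|\sigma|\ge m$ to satisfy $\Succ_S(\sigma)\subseteq\omega\setminus Z$, which is impossible. Consequently $C:=\bigcup_m N_m$, the set of reals that eventually avoid $Z$, lies in $\I_\ILF$. Second, $C$ is $\tau_\ILF$-\emph{dense}: given any $S\in\ILF$, successively choosing a successor outside $Z$ above $\stem(S)$ yields a branch of $S$ whose coordinates beyond $|\stem(S)|$ all avoid $Z$, i.e.\ a point of $C\cap[S]$.

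With these two facts the four failures are short. For \textbf{(3)}: $C\in\I_\ILF$, but $C\notin\N_\ILF$ since its $\tau_\ILF$-closure is all of $\ww$; hence $\N_\ILF\subsetneq\I_\ILF$. For \textbf{(4)}: $C$ is $\ILF$-measurable for the trivial reason that $C\in\I_\ILF$ (take $S=T$, so $[T]\cap C=^{*}\varnothing$), yet $C$ violates clause (b) of statement~(4) --- for $T=\wlw$ there is no $S\le T$ with $[S]\subseteq C$ (since $[S]\notin\I_\ILF$ by the Baire category theorem for $\tau_\ILF$ proved earlier, while $C\in\I_\ILF$) and no $S\le T$ with $[S]\cap C=\varnothing$ (since $C$ is dense) --- so clause (a) does not imply clause (b), nor clause (c), and statement~(4) fails. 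For \textbf{(1)}: let $\phi$ be the statement ``$\xG(0)\in Z$'' and $T=\wlw$; any $S\le_0\wlw$ still has $\stem(S)=\stem(\wlw)=\langle\rangle$, so below $S$ the value $\xG(0)$ ranges exactly over $\Succ_S(\langle\rangle)\in\F$, which is neither $\subseteq Z$ nor $\subseteq\omega\setminus Z$, whence no such $S$ decides $\phi$ --- pure decision fails. For \textbf{(2)}: the right-to-left implication of clause (2) is trivial, so it suffices to produce $A\in\N_\ILF$ for which no $S\le_0\wlw$ satisfies $[S]\cap A=\varnothing$; take $A:=N_0$ and, for any $S\le_0\wlw$ (so $\stem(S)=\langle\rangle$), build as in the second fact above a branch of $S$ avoiding $Z$ entirely, a point of $N_0\cap[S]$.

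The only genuine work is verifying the two facts extracted from the remark --- that each $N_m$ is $\tau_\ILF$-nowhere dense and that $C$ is $\tau_\ILF$-dense --- together with the routine point that $\le_0$ preserves the stem; the non-ultrafilter hypothesis enters solely through ``no member of $\F$ is contained in $Z$ or in $\omega\setminus Z$''. I do not anticipate any serious obstacle: the sets $N_m$ and $C$ are designed exactly so that the ``pure'' formulations (the $\le_0$-versions, exact decision, and the identity $\N=\I$) break as soon as that hypothesis is available, and the main care needed is simply to keep track of which clause of Lemma~\ref{b} is being contradicted in each of the four cases.
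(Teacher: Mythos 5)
Your proposal is correct and follows essentially the same route as the paper: both directions are handled the same way (the ultrafilter case via $\ILF=\ILFF$, the converse by fixing $Z$ with $Z\notin F$ and $\omega\setminus Z\notin F$), and your four counterexamples are exactly the paper's, up to swapping $Z$ with $\omega\setminus Z$ (the paper uses $Z^\omega$ and $A_n=\{x\mid\forall m\geq n\,(x(m)\in Z)\}$ where you use $N_m$, and cites Theorem \ref{c} for measurability where you note membership in $\I_\ILF$ suffices). You merely write out the verifications the paper leaves to the reader, so no further comparison is needed.
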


\begin{proof} If $\F$ is not an ultrafilter, let $Z$ be such that $Z \in \F^+$ and $(\omega \setminus Z) \in \F^+$, let $A_n := \{x \in \ww \mid \forall m \geq n \: (x(m) \in Z)\}$ and $A = \bigcup_n A_n = \{x \in \ww \mid \forall^\infty m \: (x(m) \in Z)\}$. Also, $x_G$ denotes the $\ILF$-generic real. We leave it to the reader to verify that \begin{itemize}

\item the statement ``$x_{G}(0) \in Z$'' cannot be decided by any $\ILF$-condition with empty stem (falsifying (1)),
\item $Z^\omega \in \N_\ILF$ but for every $T \in \ILF$ with empty stem we have $[T] \cap Z^\omega \neq \varnothing$ (falsifying (2)),
\item $A_n \in \N_\ILF$ for all $n$, but $A \notin \N_\ILF$ (falsifying (3)), and
\item $A$ is $\ILF$-measurable (see Theorem \ref{c}), but for every $T \in \ILF$ we have $[T] \not\subseteq A$ and $[T] \cap A \neq \varnothing$ (falsifying (4)). \qedhere
\end{itemize}

\end{proof}

Thus, the situation can be neatly summarized as follows: when $F$ is \emph{not} an ultrafilter, $\ILF$ generates a topology but does not satisfy properties 1--4  from Lemma \ref{b}, while $\ILFF$ satisfies those properties but does not generate a topology. $\ILF$-measurability is the Baire propery in the $\tau_\ILF$-topology, whereas $\ILFF$-measurability is the ``Marczewski''-property corresponding to the partial order $\ILFF$, and $\I_\ILFF$ is the ``Marczewski''-ideal corresponing to $\ILFF$.

In the interesting scenario when $F$ is an ultrafilter everything coincides, and the ideal $\I_\ILF$ of $\tau_\ILF$-meager sets is the same as the ideal of $\tau_\ILF$-nowhere dense sets. In this context, the ideal has been studied by Louevau in \cite{LouveauIdeal} and is sometimes called the \emph{Louveau ideal}.

\begin{Thm} \label{c} Let $F$ be a filter on $\omega$. Every analytic and co-analytic set $A \subseteq \ww$ is both $\ILF$-measurable and $\ILFF$-measurable. \end{Thm}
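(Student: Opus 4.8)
The plan is to prove both regularity properties simultaneously by the standard unfolding/fusion argument familiar from Baire-property proofs for definable sets, adapted to the tree-forcings $\ILF$ and $\ILFF$. First I would observe that, since the $\ILF$- and $\ILFF$-measurable sets each form a $\sigma$-algebra (for $\ILFF$ this is Lemma \ref{b}(5); for $\ILF$ this follows because $\ILF$-measurability is the $\tau_\ILF$-Baire property and the Baire-property sets form a $\sigma$-algebra), and since both algebras are closed under complements, it suffices to treat analytic sets $A = p[T^*]$ for a tree $T^* \subseteq \wlw \times \ww$. The heart of the matter is then a fusion argument "with side conditions": given $T \in \IP$ (where $\IP$ is $\ILF$ or $\ILFF$), either there is $S \leq T$ with $[S] \cap A = \varnothing$, in which case we are done, or we can build $S \leq T$ such that $[S] \subseteq \overbar{A}^{\tau_\IP}$-ish — more precisely such that $[S] \subseteq^* A$ in the relevant ideal.

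The mechanism I would use is a "quasi-generic" / rank analysis on the node level. For $\ILFF$: working as in the proof of Lemma \ref{b}(1)--(3), call a node $\sigma \in T$ extending $\stem(T)$ together with a finite approximation $t$ to a branch of $T^*$ "large" if there is $S \leq_0 T\thru\sigma$ and an extension of $t$ witnessing that every branch of $[S]$ lies in $p[T^*]$ along a coherent family of $T^*$-branches; otherwise "small". The key combinatorial point is that $\F^+$ is closed upward and partitions into $\F^+$-pieces force one piece into $\F^+$, so if $\stem(T)$ paired with $\varnothing$ is "small" we can thin $T$ to a subtree all of whose nodes are "small", and then a further fusion produces $S \leq T$ with $[S]\cap A = \varnothing$; if it is "large", we carefully amalgamate the witnessing subtrees along a fusion sequence $S = S_0 \geq_0 S_1 \geq_1 \cdots$ so that the limit $S \in \ILFF$ has every branch in $p[T^*]$ via a genuine infinite $T^*$-branch (König's lemma / the fusion closure of $\ILFF$ noted just before Lemma \ref{b}). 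For $\ILF$, the cleanest route is to invoke the Baire-property reformulation: by the Fact after the first Lemma, $A$ is $\ILF$-measurable iff it has the $\tau_\ILF$-Baire property, and every analytic set has the Baire property in any topology obtained by refining a Polish topology with an analytic family of basic sets — this is the classical Nikodym/Marczewski-type transfer argument, and $\{[T] : T \in \ILF\}$ is such a family since $\ILF$ is $\sigma$-centered (ccc), so the $\tau_\ILF$-topology is particularly well-behaved. Alternatively, one runs the same rank argument as for $\ILFF$ but with $\leq_0$ replaced by the coarser order and "large/small" defined via $F$ (rather than $\F^+$) successor sets, which is actually easier because $F$ is a genuine filter.

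The main obstacle I anticipate is the amalgamation step in the "large" case: one must ensure that the fusion sequence $S_n$ not only converges to a condition in $\IP$ but that the finite branch-approximations $t_\sigma$ assigned to nodes $\sigma$ cohere as $\sigma$ grows along each branch of the limit tree, so that each $x \in [S]$ actually gets an infinite branch through $T^*$ projecting to $x$. This is handled by the usual bookkeeping — at stage $n$ one only commits to approximations of length $\geq n$ on nodes of level $|\stem| + n$, and passes to level-by-level refinements — but it requires care that the "large" property is inherited by the relevant successors, which again rests on the $\F^+$-partition fact for $\ILFF$ and on upward closure of $F$ for $\ILF$. A secondary point to check is that the unfolding of a $\SIGMA^1_1$ set is genuinely a projection of a tree on $\omega\times\omega$, i.e. that this all takes place in $\ww$ rather than in a product space, which is routine since $\ww$ is homeomorphic to $\ww \times \ww$.
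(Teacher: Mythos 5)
Your $\ILF$ half is fine and is essentially the paper's argument: since $\tau_\ILF$ refines the standard topology, an analytic set is the result of the Suslin operation applied to standard-closed (hence $\tau_\ILF$-closed) sets, and in an arbitrary topological space the Baire property is preserved by the Suslin operation; note that neither $\sigma$-centredness of $\ILF$ nor any definability of the family $\{[T] \mid T \in \ILF\}$ is needed for this, only that the topology is refined. The problem is the $\ILFF$ half, which is not ``the standard unfolding/fusion argument'': the paper proves it quite differently, namely by pure decision plus fusion (Remark \ref{strongproper}: below any condition there is $S'$ all of whose branches are generic over a countable elementary model) followed by $\SIGMA^1_1$-absoluteness, or alternatively by quoting Miller's dichotomy (Theorem \ref{dichotomy}, via Remark \ref{forcingfree}).

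The genuine gap in your sketch is the ``small'' case, which you dispose of with ``a further fusion produces $S \leq T$ with $[S]\cap A = \varnothing$.'' Smallness of a pair $(\sigma,t)$ is a non-existence statement (no $\leq_0$-refinement of $T\thru\sigma$ lands inside the unfolded set above $t$); it gives you no node at which branches can be locally steered off $p[T^*]$, so no fusion of the usual kind yields avoidance: a tree all of whose pairs are ``small'' is perfectly compatible with individual branches (indeed all branches) lying in the analytic set. Turning ``no subtree goes inside $A$'' into ``some subtree avoids $A$'' is exactly the content of the Laver dichotomy of Goldstern et al.\ \cite{GoldsternGame} and of its filter version, Theorem \ref{dichotomy}, which in the classical case needed an unfolded closed game and in the filter case needed Miller's rank analysis \cite{MillerMesses} --- the paper devotes Section \ref{Sec3} to it precisely because it is not routine. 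There is also a secondary problem in your thinning step: to keep a node's successor set ``small for all relevant $t$'' you must discard, for each of infinitely many $t$, an $F^-$-set of successors, and a countable union of $F^-$-sets need not be $F^-$ (already for $F=\Cof$ it can swallow $\Succ_T(\sigma)$ entirely), so the preservation of $\F^+$-positivity requires bookkeeping you have not specified. Finally, the obstacle you do anticipate (coherence of the finite approximations in the ``large'' case) is not where the work lies: with ``large'' defined as you define it, largeness of the stem already is the desired conclusion. To make your route work you would have to either import the dichotomy (at which point you are reproducing Remark \ref{forcingfree}) or carry out a genuine rank analysis in Miller's style; as written, the fusion sketch does not go through.
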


\begin{proof} Since $\tau_\ILF$ refines the standard topology on $\ww$, analytic (co-analytic) sets are also analytic (co-analytic) in  $\tau_\ILF$. By classical results, such sets have the $\tau_\ILF$-Baire property. 

\p For $\ILFF$, suppose $A$ is analytic, defined by a $\Sigma^1_1(r)$ formula $\phi$. Let $T \in \ILFF$. Let $S \leq T$ be a stronger condition forcing $\phi(\dot{x}_G)$ or $\lnot \phi(\dot{x}_G)$, without loss of generality the former. Let $M$ be a countable elementary submodel of a sufficiency large $\mathcal{H}_\theta$ with $S, r, F \in M$. %(note that $\ILFF \cap M = \IL_{(F \cap M)^+} \cap M$ [citation needed???]).
 By Remark \ref{strongproper}, we can find an $S' \leq S$ such that all $x \in [S']$ are $\ILFF \cap M$-generic over $M$. Then for all such $x$ we have $M[x] \models \phi(x)$. By $\SIGMA^1_1$-absoluteness, $\phi(x)$ is really true. Thus we have $[S'] \subseteq A$. The co-analytic case is analogous. \end{proof}

A different (forcing-free) proof of the second assertion will follow from Theorem \ref{dichotomy}.

\bigskip
\renewcommand{\B}{{\sf Borel}}
From the above it follows that there we have dense embeddings $\ILF \embeds_d \B(\ww)/\I_\ILF$ and $\ILFF \embeds_d \B(\ww)/\I_\ILFF$.

\begin{Def} Let $\G$ be a projective pointclass. The notation $\G(\ILF)$ and $\G(\ILFF)$ abbreviates the propositions ``all sets of complexity $\G$ are $\ILF$-measurable'' and ``all sets of complexity $\G$ are $\ILF$-measurable'', respectively. \end{Def}

The statements $\SIGMA^1_2(\ILF)$ and $\SIGMA^1_2(\ILFF)$ are independent of ZFC, and we will study the exact strength of these statements in Section \ref{Sec4} (for arbitrary $F$) and Section \ref{Sec5} (for definable $F$).

\section{A dichotomy theorem for $\ILFF$} \label{Sec3}

While $\I_\ILF$ is a ccc Borel-generated ideal exhibiting many familiar properties, $\I_\ILFF$ is a ``Marczewski-style'' ideal, which is not Borel-generated and rather difficult to study. The rest of the paper depends crucially on the dichotomy result presented in this section, which simplifies the ideal $\I_\ILFF$ when it is restricted to Borel sets. %, in a way analogous to the classical Laver forcing.  
The proof, as well as several key insights, are due to Arnold Miller \cite{MillerMesses}. For motivation, recall the \emph{Laver dichotomy}, originally due to Goldstern et al \cite{GoldsternGame}.% % concepts from Laver forcing.

\newcommand{\D}{{\mathcal{D}}}
\newcommand{\FF}{{{\F}^+}}

\begin{Def} \label{aaa} If $f: \wlw \to \omega$ and $x \in \ww$, we say that \emph{$x$ strongly dominates $f$} if $\forall^\infty n \: (x(n) \geq f(x \till n))$. A family $A \subseteq \ww$ is called \emph{strongly dominating} if for every $f: \wlw \to \omega$ there exists $x  \in A$ which strongly dominates $f$. $\D$ denotes the ideal of sets $A$ which are \emph{not} strongly dominating. \end{Def}

It is easy to see that if $T \in \IL$ then $[T] \notin \D$, and the classical result \cite[Lemma 2.3]{GoldsternGame} shows that if $A$ is analytic, then either $A \in \D$ or there is a Laver tree $T$ such that $[T] \subseteq A$. The ideal $\D$ was discovered independently by Zapletal (cf. \cite[Lemma 3.3.]{IsolatingCardinals}) and was studied, among others, in \cite{RepickyDeco, Deco}. Generalising this, we obtain the following definitions:

\begin{Def} Let $F$ be a filter on $\omega$. If $\varphi: \wlw \to F$ and $x \in \ww$, we say that \emph{$x$ $F$-dominates $\varphi$} iff $\forall^\infty n \:(x(n) \in \varphi(x \till n))$. A family $A \subseteq \ww$ is \emph{$F$-dominating} if for every $\varphi: \wlw \to F$ there exists $x  \in A$ which  dominates $\varphi$. $\D_{\FF}$ denotes the ideal of sets $A$ which are not $F$-dominating. In other words:

$$A \in \D_\FF \; :\Longleftrightarrow \; \exists \varphi: \wlw \to F \;\; \forall x \in A \;\; \exists^\infty n \:(x(n) \notin \varphi(x \till n)). $$
\end{Def}

In the above context, the terminology ``$F$-dominates'' might seem inappropriate, but we choose it in order to retain the analogy with Definition \ref{aaa}. Note that $\D = \D_{\Cof^+}$.

\begin{Lem} $\D_\FF$ is a $\sigma$-ideal. \end{Lem}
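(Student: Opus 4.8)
The plan is to show directly that $\D_\FF$ is closed under subsets and under countable unions; being a nonempty proper collection of subsets of $\ww$ (it contains $\varnothing$ and does not contain $\ww$, since for any $\varphi$ there is an $x$ with $x(n)\in\varphi(x\till n)$ for all $n$, built by recursion) then makes it a $\sigma$-ideal. Closure under subsets is immediate from the definition: if $A\in\D_\FF$ is witnessed by $\varphi:\wlw\to F$ and $B\subseteq A$, the same $\varphi$ witnesses $B\in\D_\FF$.

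The substantive part is countable unions. Suppose $A_k\in\D_\FF$ for each $k<\omega$, witnessed by $\varphi_k:\wlw\to F$, so that every $x\in A_k$ satisfies $\exists^\infty n\:(x(n)\notin\varphi_k(x\till n))$. I want a single $\varphi:\wlw\to F$ witnessing $\bigcup_k A_k\in\D_\FF$. The natural idea is to ``diagonalize'': for $\sigma\in\wlw$ let $\varphi(\sigma):=\bigcap_{k\leq|\sigma|}\varphi_k(\sigma)$, which lies in $F$ since $F$ is a filter closed under finite intersections. Now take $x\in A_k$ for some $k$. Since $x$ fails to $F$-dominate $\varphi_k$, there are infinitely many $n$ with $x(n)\notin\varphi_k(x\till n)$; among these, infinitely many have $n=|x\till n|\geq k$, and for such $n$ we have $\varphi(x\till n)\subseteq\varphi_k(x\till n)$, hence $x(n)\notin\varphi(x\till n)$. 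Thus $\exists^\infty n\:(x(n)\notin\varphi(x\till n))$, i.e. $x$ does not $F$-dominate $\varphi$, so $x\in\D_\FF$-witnessed $\bigcup_k A_k$.

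The main obstacle — really the only place to be careful — is making sure the cofinitely-many / infinitely-many bookkeeping lines up: the definition of $F$-domination uses $\forall^\infty$ (cofinitely many $n$), so its negation is $\exists^\infty$ (infinitely many $n$), and one must note that restricting an infinite set of ``bad'' $n$'s to those with $n\geq k$ still leaves infinitely many, which is what lets the finite-intersection trick go through. I would state this explicitly. Finally, to record that $\D_\FF$ is a proper ideal (not all of $\PP(\ww)$), I would remark that given any $\varphi:\wlw\to F$ one recursively builds $x$ with $x(n)\in\varphi(x\till n)$ for all $n$ (possible since each $\varphi(x\till n)\in F$ is in particular nonempty), so $\{x\}\notin\D_\FF$ and $\ww\notin\D_\FF$; together with closure under subsets and countable unions this gives the $\sigma$-ideal claim.
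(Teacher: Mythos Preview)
Your proof is correct and follows essentially the same approach as the paper: the key step is the diagonal definition $\varphi(\sigma):=\bigcap_{k\leq|\sigma|}\varphi_k(\sigma)$ (the paper uses $i<|\sigma|$, an immaterial difference), followed by the observation that infinitely many of the ``bad'' $n$'s for $\varphi_k$ satisfy $n\geq k$ and hence are bad for $\varphi$ as well. Your additional remarks on closure under subsets and properness of the ideal are correct and merely make explicit what the paper leaves implicit.
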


\begin{proof} Suppose $A_i \in \D_\FF$ for $i < \omega$. Let $\varphi_i$ witness this for each $i$, and define $\varphi$ by setting $\varphi(\sigma) := \bigcap_{i<|\sigma|} \varphi_i(\sigma)$. We claim that $\varphi$ witnesses that $A = \bigcup_{i<\omega} A_i \in \D_\FF$. Pick $x \in A$. There is $i$ such that $x \in A_i$, hence for infinitely many $n$ we have $x(n) \notin \varphi_i(x \till n)$. But if $n > i$ then $\varphi(x \till n) \subseteq \varphi_i(x \till n)$. Therefore, for infinitely many $n$ we also have $x(n) \notin \varphi(x \till n)$. \end{proof}

\begin{Lem} \label{d} Let $A \subseteq \ww$. The following are equivalent:  \begin{enumerate}
\item $A \in \D_\FF$.
\item $\forall \sigma \in \wlw \: \exists T \in \IL_F$ with $\stem(T) = \sigma$, such that $[T] \cap A = \varnothing$. 
\item $\forall S \in \ILF \: \exists T \leq_0 S \: ([S] \cap A = \varnothing)$ \end{enumerate} \end{Lem}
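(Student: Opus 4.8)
The plan is to prove the cycle of implications $(1)\Rightarrow(3)\Rightarrow(2)\Rightarrow(1)$, reading the conclusion of $(3)$ as $[T]\cap A=\varnothing$. The two forward implications are direct constructions of $\F$-Laver trees; essentially all of the content lies in $(2)\Rightarrow(1)$, where the countably many trees supplied by $(2)$ must be amalgamated into a single function $\varphi\colon\wlw\to F$, and I expect that to be the main obstacle.

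For $(1)\Rightarrow(3)$ I would argue as follows. Let $\varphi\colon\wlw\to F$ witness $A\in\D_\FF$ and fix $S\in\ILF$ with stem $\sigma$; build $T$ by keeping the stem $\sigma$ and setting $\Succ_T(\tau):=\Succ_S(\tau)\cap\varphi(\tau)$ for each $\tau\in T$ extending $\sigma$. Since $\tau$ extends $\stem(S)$ we have $\Succ_S(\tau)\in F$, hence $\Succ_T(\tau)\in F$, so $T$ is an $\F$-Laver tree with $T\leq_0 S$. Every $x\in[T]$ has $x(n)\in\varphi(x\till n)$ for \emph{all} $n\geq|\sigma|$, so $x$ fails the clause defining $\D_\FF$-membership relative to $\varphi$, whence $x\notin A$ and $[T]\cap A=\varnothing$. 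The implication $(3)\Rightarrow(2)$ is then immediate: given $\sigma\in\wlw$, apply $(3)$ to the full $\F$-Laver tree with stem $\sigma$ (all successor sets equal to $\omega\in F$); the resulting $T\leq_0 S$ has stem exactly $\sigma$ and avoids $A$, which is exactly $(2)$.

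The heart of the proof is $(2)\Rightarrow(1)$. Using $(2)$, fix for every $\sigma\in\wlw$ an $\F$-Laver tree $T_\sigma$ with $\stem(T_\sigma)=\sigma$ and $[T_\sigma]\cap A=\varnothing$, and set
$$\varphi(\tau):=\bigcap\bigl\{\,\Succ_{T_\sigma}(\tau)\ :\ \sigma\subseteq\tau\ \text{and}\ \tau\in T_\sigma\,\bigr\}.$$
The key observation that makes this work is that the index set is \emph{finite} and nonempty — it always contains $\sigma=\tau$, since $\tau=\stem(T_\tau)\in T_\tau$ — so $\varphi(\tau)$ is a finite intersection of members of $F$ and therefore lies in $F$; an infinite intersection would not do, and this is precisely why one argues level by level rather than attempting to fuse the $T_\sigma$ directly (indeed $\ILF$ admits no fusion). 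To see that $\varphi$ witnesses $A\in\D_\FF$, suppose toward a contradiction that some $x\in A$ satisfies $x(n)\in\varphi(x\till n)$ for all $n\geq m$, and put $\sigma:=x\till m$. A routine induction then shows $x\till n\in T_\sigma$ for all $n$: at the inductive step, for $n\geq m$ the set $\Succ_{T_\sigma}(x\till n)$ is one of the sets intersected to form $\varphi(x\till n)$, so $x(n)\in\varphi(x\till n)\subseteq\Succ_{T_\sigma}(x\till n)$ and hence $x\till(n+1)\in T_\sigma$. Thus $x\in[T_\sigma]$, contradicting $[T_\sigma]\cap A=\varnothing$. Therefore every $x\in A$ satisfies $x(n)\notin\varphi(x\till n)$ for infinitely many $n$, i.e.\ $A\in\D_\FF$.

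Beyond the finiteness observation above, the only thing to watch is routine bookkeeping with stems — for instance, checking that $\Succ_T(\sigma)=\Succ_S(\sigma)\cap\varphi(\sigma)$ still leaves $T$ with stem exactly $\sigma$, which is immediate once $F$ is non-principal (and if $F$ is principal then $\ILF$ is degenerate anyway). With that dispensed with, everything else is straightforward.
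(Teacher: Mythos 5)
Your proof is correct and takes essentially the same route as the paper: your amalgamation $\varphi(\tau)=\bigcap\{\Succ_{T_\sigma}(\tau) : \sigma\subseteq\tau,\ \tau\in T_\sigma\}$ is exactly the paper's finite intersection $\varphi(\sigma)=\bigcap_{\tau\subseteq\sigma}\varphi_\tau(\sigma)$ (the paper merely extends each tree's successor function to a total $\varphi_\sigma$ on $\wlw$ first), and your $(1)\Rightarrow(3)$ is the paper's observation about the trees $T_{\sigma,\varphi}$ combined with the equivalence of (2) and (3) that the paper declares clear. Only the packaging differs: you run the cycle $(1)\Rightarrow(3)\Rightarrow(2)\Rightarrow(1)$ instead of proving $1\Leftrightarrow 2$ directly.
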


\begin{proof} The equivalence between 2 and 3 is clear  so we prove the equivalence between 1 and 2.

\p First, note that if $\varphi: \wlw \to F$ and $\sigma \in \wlw$, then there is a unique  $T_{\sigma, \varphi} \in \IL_F$ such that $\stem(T_{\sigma,\varphi}) = \sigma$ and $\forall \tau \supseteq \sigma$, $\Succ_{T_{\sigma,\varphi}}(\tau) = \varphi(\tau)$. Conversely, for every $T \in \IL_F$ with $\stem(T) = \sigma$,  there exists a (not unique) $\varphi$ such that $T = T_{\sigma,\varphi}$.

\p Now suppose $A \in \D_\FF$, as witnessed by  $\varphi$, and let $\sigma \in \wlw$.  Then $A \cap [T_{\sigma,\varphi}] = \varnothing$, since if $x \in A \cap [T_{\sigma,\varphi}]$ then $\forall n > |\sigma| \:( x(n) \in \varphi(x\till n))$, contrary to the assumption.

\p Conversely, suppose for every $\sigma$ there is $T_\sigma \in \IL_F$ such that $\stem(T_\sigma) = \sigma$ and $A \cap [T_\sigma] = \varnothing$. For each $\sigma$, let $\varphi_\sigma: \wlw \to F$ be such that $T_\sigma = T_{\sigma, \varphi_\sigma}$. Then define $\varphi: \wlw \to F$ by $$\varphi(\sigma) = \bigcap_{\tau \subseteq \sigma} \varphi_\tau(\sigma).$$
We claim that $\varphi$ witnesses that $A \in \D_\FF$. Let $x \in A$ be arbitrary. Let $\sigma \subseteq x$. Then $x \notin [T_\sigma] = [T_{\sigma ,\varphi_\sigma}]$, hence, there is $n > |\sigma|$ such that $x(n) \notin \varphi_\sigma(x \till n)$. But by definition, since $\sigma \subseteq x \till n$, we have $\varphi(x \till n) \subseteq \varphi_\sigma(x \till n)$. Therefore also $x(n) \notin \varphi(x \till n)$. %Since such an $n > |\sigma|$ can be found for every $\sigma \subseteq x$, we have indeed proved that it holds for infinitely many $n$. 
\end{proof}

The following are easy consequences of the above; the proofs are left to the reader.
\begin{Lem} \label{35}

$\;$ \begin{enumerate}

\item $\D_\FF \subseteq \N_\ILF$.
\item $\D_\FF \subseteq \I_\ILFF$ $($in particular, if $T \in \ILFF$ then $[T] \notin \D_\FF)$.
\item If $F$ is an ultrafilter then $\D_\FF = \N_\ILF  = \I_\ILF = \I_\ILFF$.
\item If $F$ is not an ultrafilter then there is a closed witness to $\D_\FF \neq \N_\ILF$.  \end{enumerate}
\end{Lem}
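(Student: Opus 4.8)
The plan is to prove each of the four items of Lemma \ref{35} in turn, relying on the characterization of $\D_\FF$ provided by Lemma \ref{d} and the analogous characterization of $\N_\ILF$ and $\I_\ILFF$ (via $\leq_0$) from the Baire-category discussion and from Lemma \ref{b}.

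\textbf{Items (1) and (2).} For (1), suppose $A \in \D_\FF$. By Lemma \ref{d}(3), for every $S \in \ILF$ there is $T \leq_0 S$ with $[T] \cap A = \varnothing$; since $T \leq S$ witnesses that $A$ is $\tau_\ILF$-nowhere dense at $[S]$, and $S$ was arbitrary, $A \in \N_\ILF$. For (2), the key point is that an $\FF$-Laver tree is in particular an $F$-Laver tree once we restrict to nodes above the stem. More precisely, given $\varphi : \wlw \to F$ witnessing $A \in \D_\FF$ and given any $S \in \ILFF$, I would build a fusion sequence $S = S_0 \geq_0 S_1 \geq_1 \cdots$ thinning the successor sets of the already-chosen nodes so that at level $k$ past the stem each new node $\sigma$ keeps only successors in $\Succ_{S}(\sigma) \cap \varphi(\sigma)$ — this intersection is still in $\FF$ since $\Succ_S(\sigma) \in \FF$ and $\varphi(\sigma) \in F$, and $F^+$ is closed under intersection with members of $F$. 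The limit tree $S' = \bigcap_n S_n$ is in $\ILFF$, $S' \leq S$, and every $x \in [S']$ satisfies $x(n) \in \varphi(x\till n)$ for all sufficiently large $n$, i.e.\ $x$ $F$-dominates $\varphi$, contradicting $A \in \D_\FF$ unless $[S'] \cap A = \varnothing$. Hence $A \in \N_\ILFF = \I_\ILFF$ by Lemma \ref{b}(3). Taking $A = [T]$ for $T \in \ILFF$ gives the parenthetical remark, since $[T] \notin \I_\ILFF$ by the Baire category theorem for $\ILFF$ (Lemma \ref{b} guarantees $\N_\ILFF = \I_\ILFF$, and $[T] \notin \N_\ILFF$ is immediate from $T \leq T$).

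\textbf{Item (3).} Assume $F$ is an ultrafilter, so $\FF = F$ and $\ILF = \ILFF$ as partial orders. Then by Lemma \ref{d}, $A \in \D_\FF$ iff for every $S \in \ILF$ there is $T \leq_0 S$ with $[T] \cap A = \varnothing$; but when $F$ is an ultrafilter this is precisely statement (2b) of Lemma \ref{b}, which by Lemma \ref{b}(2)--(3) is equivalent to $A \in \N_\ILFF = \I_\ILFF = \I_\ILF$ (the last equality because the $\ILF$ and $\ILFF$ ideals coincide when the forcings coincide). It remains to fold in $\N_\ILF$: since $\ILF = \ILFF$, the nowhere-dense ideal $\N_\ILF$ equals $\N_\ILFF$, which equals $\I_\ILFF$ by Lemma \ref{b}(3). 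Chaining the equalities yields $\D_\FF = \N_\ILF = \I_\ILF = \I_\ILFF$. (Items (1) and (2) already give $\D_\FF \subseteq \N_\ILF$ and $\D_\FF \subseteq \I_\ILFF$; the ultrafilter hypothesis supplies the reverse inclusions.)

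\textbf{Item (4).} Assume $F$ is not an ultrafilter and fix $Z$ with $Z \in \FF$ and $\omega \setminus Z \in \FF$. I would take the closed set $C := \{x \in \ww \mid \forall n\ (x(n) \in Z)\} = Z^\omega$. On one hand $C \notin \D_\FF$: given any $\varphi : \wlw \to F$, one can recursively pick $x(n) \in Z \cap \varphi(x\till n)$ — nonempty (indeed in $\FF$) because $Z \in \FF$ and $\varphi(x\till n) \in F$ — producing $x \in C$ that $F$-dominates $\varphi$. On the other hand $C \in \N_\ILF$: given $S \in \ILF$ with stem $\sigma$, since $\omega \setminus Z \in \FF$ we have $\Succ_S(\sigma) \not\subseteq Z$, so some immediate successor $\sigma\cc\langle m\rangle$ of $\sigma$ in $S$ has $m \notin Z$, and then $S\thru(\sigma\cc\langle m\rangle) \leq S$ is disjoint from $C$; as $S$ was arbitrary, $C$ is $\tau_\ILF$-nowhere dense. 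Thus $C$ is a closed witness to $\D_\FF \neq \N_\ILF$.

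The main obstacle is item (2): one must be careful that the fusion construction genuinely stays inside $\ILFF$, i.e.\ that intersecting successor sets of an $\FF$-Laver tree with the $F$-sets dictated by $\varphi$ preserves $\FF$-positivity at every node above the stem — this uses exactly the dual relationship $a \in \FF$ and $b \in F$ implies $a \cap b \in \FF$. Everything else is a direct translation through Lemmas \ref{d} and \ref{b}.
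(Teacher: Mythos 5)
Your proof is correct, and it follows exactly the route the paper intends: the lemma is stated there as an ``easy consequence'' of Lemma \ref{d} (together with Lemma \ref{b}), with no written proof, and your arguments---thinning successor sets of an $\ILFF$-tree by the values of $\varphi$ (using that $F^+$-sets meet $F$-sets in $F^+$-sets) for item (2), identifying the two $\leq_0$-characterizations when $F^+=F$ for item (3), and taking $Z^\omega$ for $Z$ with both $Z$ and $\omega\setminus Z$ in $\FF$ for item (4)---are precisely these intended consequences.
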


%\begin{proof}  $\;$ \begin{enumerate}

%\item This is clear from the characterisation in Lemma \ref{d} (3).
%\item Let $A \in \D_\FF$ and $T \in \ILFF$. Again by Lemma \ref{d} (3) there is $S \in \ILF$ with $\stem(S) = \stem(T)$ and $[S] \cap A = %\varnothing$. But then $S \cap T$ is an $\ILFF$ and $[S \cap T] \cap A = \varnothing$.%
%\item If $F$ is an ultrafilter then condition (3) of Lemma \ref{d} is exactly the equivalent characterization of $\N_\ILF$ from Lemma \ref{b} (2).
%\item Let $Z \notin F$ such that $\omega \setminus Z \notin F$. Then $Z^\omega \in \N_\ILF \setminus \D_\FF$. \qedhere
%\end{enumerate}
%\end{proof}

%The next theorem generalises Goldstern's dichotomy. %The argument involves only a very slight modification of Miller's proof \cite]{MillerNotHisTrees}.

\begin{Thm}[Miller] \label{dichotomy} For every analytic $A$, either $A \in \D_\FF$ or there is $T \in  \ILFF$ such that $[T] \subseteq A$. \end{Thm}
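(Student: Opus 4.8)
The plan is to mimic the classical proof of the Laver dichotomy (Goldstern et al. \cite{GoldsternGame}), replacing the strong-domination game by a game tailored to the filter $F$, and to exploit the analytic determinacy / unfolding technique to get a winning strategy on the side that yields the $\ILFF$-tree. Concretely, suppose $A$ is analytic, say $A = p[\T]$ for a tree $\T$ on $\omega \times \omega$ (a projection of a closed set), and assume $A \notin \D_\FF$; I want to produce $T \in \ILFF$ with $[T] \subseteq A$. By Lemma \ref{d}, $A \notin \D_\FF$ means there exists $\sigma \in \wlw$ such that \emph{no} $\ILF$-tree with stem $\sigma$ avoids $A$; equivalently, for every $\varphi : \wlw \to F$, the tree $T_{\sigma,\varphi}$ has $[T_{\sigma,\varphi}] \cap A \neq \varnothing$. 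Fix such a $\sigma$ once and for all; it will become the stem of the tree we build.

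The core is a two-player game, played below $\sigma$. Player I (the ``builder'') is trying to descend through $A$ (using the auxiliary tree $\T$ to witness membership), while Player II (the ``spoiler'') is trying to thin out successor sets to an element of $F$ — the point being that if II can win, then II's strategy assembles into a function $\varphi : \wlw \to F$ witnessing $A \in \D_\FF$, contradicting our choice of $\sigma$. More precisely: on each round, given the current node $\tau \supseteq \sigma$ together with a finite approximation $s$ to a branch of $\T$ over $\tau$, Player II plays a set $H_\tau \in F$, and Player I responds with some $n \in H_\tau$ (so that $\tau \cc \langle n\rangle$ stays ``alive'') together with an extension of $s$; the run produces $x \in \ww$ and $y \in \ww$ with $(x,y) \in [\T]$, so $x \in A$, and I wins iff this happens. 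Since $A$ is analytic, this is a closed (hence determined) game for Player I in the unfolded formulation — one runs analytic determinacy, or more elementarily observes that the set of winning positions for I is built by a monotone inductive (well-founded) definition, so that ``I has a winning strategy'' is decidable by a rank analysis with no choice beyond what ZFC provides.

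If Player II has a winning strategy, then from it I extract, for each reachable $\tau$, the set $H_\tau \in F$ it would play, and — after closing under all finite plays I could make — I define $\varphi(\tau) := \bigcap$ of the finitely many relevant $H$-values (exactly as in the $\sigma$-ideal proof of Lemma for $\D_\FF$ being a $\sigma$-ideal); then $T_{\sigma,\varphi}$ is an $\ILF$-tree with stem $\sigma$ whose branches cannot realize a full successful run of I, i.e.\ $[T_{\sigma,\varphi}] \cap A = \varnothing$, contradicting the choice of $\sigma$. Hence Player I has a winning strategy $\chi$. Now comes the step where the filter being a \emph{filter} (closed under finite intersections and supersets) is used crucially: I run $\chi$ against \emph{all} possible moves of II simultaneously. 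For a node $\tau$ in the tree being built, the strategy $\chi$, confronted with an arbitrary $H \in F$, returns some $n = \chi(\tau, H) \in H$; I want to find a single set $E_\tau \in \F^+$ — in fact I'd like $E_\tau \in F$ when $F$ is not too pathological, but $\F^+$ is all I need for an $\ILFF$-tree — such that for every $m \in E_\tau$ the node $\tau \cc \langle m\rangle$ is reachable by $\chi$ against \emph{some} legal $H$. The key combinatorial lemma will be: if for every $H \in F$ the value $\chi(\tau,H)$ lies in $H$, then $\{\chi(\tau,H) : H \in F\} \in \F^+$ — indeed if it were in $\F^-$ its complement $C$ would be in $F$, and then $\chi(\tau, C) \in C$, a contradiction. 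Taking $\Succ_T(\tau)$ to be (a subset of) this set for every $\tau$ extending $\sigma$ gives an $\ILFF$-tree $T$ with $\stem(T) = \sigma$, and every branch through $T$ can be read off as the $x$-part of a run consistent with the winning strategy $\chi$, hence lies in $A$; so $[T] \subseteq A$, as desired.

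The main obstacle I anticipate is the bookkeeping in the last step: the strategy $\chi$ depends on the \emph{entire history}, not just on the current node, so ``$\tau \cc \langle m \rangle$ is reachable against some $H$'' must be organized coherently along a single tree $T$, i.e.\ I must choose, simultaneously and consistently, for each $\tau \in T$ a history $h_\tau$ (a legal partial run of the game ending at $\tau$, following $\chi$) such that $h_{\tau \cc \langle m\rangle}$ extends $h_\tau$; and the successor set $\Succ_T(\tau)$ must be carved out of the $\F^+$-set $\{\chi(h_\tau \cc H) : H \in F\}$ in a way that keeps the recursion going. This is a standard fusion-style construction, but one has to be careful that the auxiliary $\T$-coordinate (the witness that we are really inside $A = p[\T]$) is threaded through as well, so that the branch $x \in [T]$ comes equipped with a genuine $y$ with $(x,y) \in [\T]$. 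I expect no essential difficulty beyond this — the filter axioms do exactly the work one needs, and the reduction to a determined (indeed closed, after unfolding) game handles the analytic case uniformly. A secondary point to check is that the game is set up so that Player II losing really does force reachability of an $\F^+$-set of successors at every node (and not merely at the stem), which is why the game must be restarted/continued from each node rather than played only once from $\sigma$.
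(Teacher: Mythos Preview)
Your approach is precisely the one the paper warns against. In the footnote to Theorem~\ref{dichotomy} the author remarks that the game-and-determinacy proof of Goldstern et al.\ \emph{does not generalise to arbitrary filters}, and that Miller's proof succeeds because it uses a direct rank/tree argument instead. Your proposal reproduces the game approach and runs into exactly the obstruction that makes it fail.

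The gap is in the ``Player~II wins'' case. To stay within ZFC you unfold the game, so that I plays both $x(n)$ and an auxiliary witness $s$ (a finite approximation to a branch of $\T$), and the payoff becomes closed. That is fine for determinacy, but now II's winning strategy depends on the full history, \emph{including the $s$-moves}. When you write ``$\varphi(\tau):=\bigcap$ of the finitely many relevant $H$-values,'' you are implicitly assuming that for a fixed $\tau=x\!\upharpoonright\! n$ there are only finitely many auxiliary histories $s$ of length $n$ with $(\tau,s)\in\T$. For a general tree $\T$ on $\omega\times\omega$ this is false: the $\T$-sections need not be finitely branching (and if they were, $p[\T]$ would be $\PI^0_2$ rather than genuinely analytic). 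Hence the intersection defining $\varphi(\tau)$ is over an \emph{infinite} family of elements of $F$, and since $F$ is merely a filter, this intersection need not lie in $F$. You therefore cannot extract an $\ILF$-tree $T_{\sigma,\varphi}$ disjoint from $A$ to contradict the choice of $\sigma$, and the argument collapses. The analogy with the proof that $\D_\FF$ is a $\sigma$-ideal is misleading: there the intersection is indexed by $i<|\sigma|$, which really is finite.

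In the cofinite case one can rescue the game argument by special tricks exploiting that II's moves are integers (so one can take suprema, reorganise the unfolding, etc.), but these manoeuvres have no analogue when II's moves are arbitrary elements of a filter. Miller's proof sidesteps the issue entirely: it proceeds by a direct rank analysis on nodes $\tau$ (roughly, $\tau$ is ``good'' if some $\ILFF$-tree with stem $\tau$ lies inside $A$, and one shows by well-foundedness that if no $\tau$ extending $\sigma$ is good then an $\ILF$-tree avoiding $A$ can be built), never invoking determinacy. Your ``I wins'' direction, incidentally, is correct --- the $\F^+$ argument for the successor sets is exactly right --- but the other half cannot be repaired within the game framework as you have set it up.
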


\begin{proof} See the proof of Theorem 3 and the comment after Theorem 8 in \cite{MillerMesses}.\footnote{Here we should also note that Miller's Theorem 3 is, in fact, a direct consequence of Goldstern et al's dichotomy  \cite[Lemma 2.3]{GoldsternGame}. However, the point is that its generalisation to filters does not follow from the proof in \cite{GoldsternGame}, which uses infinite games and determinacy. Miller's proof, on the other hand, uses only classical methods and generalises directly to filters.} We need a slight modification of this proof: rather than talking about trees \emph{with empty stem}, we consider trees with a fixed stem $\sigma$. If $A \notin \D_\FF$, then by Lemma \ref{d} (2), there exists $\sigma \in \wlw$ such that for all $S \in \ILF$ with $\stem(S) = \sigma$, $[S] \cap A \neq \varnothing$. By applying the same argument as in  \cite[Theorem 3]{MillerMesses}, we obtain a $T \in \ILFF$ (with $\stem(T) = \sigma$) such that $[T] \subseteq A$.\end{proof}
 
\begin{Remark} \label{forcingfree} As a direct consequence of this theorem, we obtain an alternative (forcing-free) proof of the second part of Theorem \ref{c}. Namely: let $A$ be analytic and let $T \in \ILFF$ be arbitrary, so $A \cap [T]$ is analytic. If there exists $S \in \ILFF$ with $[S] \subseteq A \cap [T]$   we are done, and if $A \cap [T] \in \D_\FF$,   use Lemma \ref{d} to find a tree $U \in \ILF$ with $\stem(U) = \stem(T)$ and $[U] \cap A \cap [T] = \varnothing$. Notice that $T \cap U \in \ILFF$, so we are done.\end{Remark}

Also, we now have a dense embedding $\ILFF \embeds_d \B(\ww) / \D_\FF$, with $\D_\FF$ being a Borel-generated $\sigma$-ideal which is far easier to study than $\I_\ILFF$. This will be of particular importance in Section \ref{Sec5} where we look at analytic filters.

\section{Direct implications}\label{Sec4}
 
We first look at some straightforward implications between various statements of the form $\G(\ILF)$, $\G(\ILFF)$ and $\G(\IP)$ for other well-known forcings $\IP$. Here $\G$ denotes an arbitrary \emph{boldface pointclass}, i.e., a collection of subsets of $\ww$ closed under continuous pre-images and intersections with closed sets. No further assumptions on the complexity of $\F$ are required.

\medskip Recall the following reducibility relations for filters on a countable set:

\begin{Def} Let $F,G$ be filters on $\dom(F)$ and $\dom(G)$, respectively. We say that: \begin{enumerate}
\item $G$ is \emph{Katetov-reducible} to $F$, notation $G \leq_K F$, if there is a map $\pi: \dom(F) \to \dom(G)$ such that  $a \in G \Rightarrow \pi^{-1}[a] \in F$.
\item $G$ is \emph{Rudin-Keisler-reducible} to $F$, notation $G \leq_{RK} F$, if there is a map $\pi: \dom(F) \to \dom(G)$ such that  $a \in G \Leftrightarrow \pi^{-1}[a] \in F$. \end{enumerate} \end{Def}
 
\begin{Remark} Note that $G \leq_K F$ and $G \leq_{RK} F$ are equivalent to the reducibility relation between ideals (i.e., between $G^-$ and $F^-$). Also, it is clear that if $\pi$ witnesses $G \leq_K F$, then $a \in F^+ \Rightarrow \pi[a] \in G^+$, and if $\pi$ witnesses $G \leq_{RK} F$ then, in addition, $a \in F \Rightarrow \pi[a] \in G$. \end{Remark}

\begin{Notation} We use the following slight abuse of notation: if $F$ is a filter and $a \in F^+$, then $F \till a$ denotes the set $\{b \subseteq a \mid (a \setminus b) \in F^-\}$. In other words, $F \till a$ is the filter with $\dom(F \till a) = a$ which is  dual  to the ideal $(F^-) \till a$. \end{Notation}

\begin{Def} A filter $F$ is called   \emph{$K$-uniform} if for every $a \in F^+$, $F \till a  \leq_K F$. \end{Def}% A filter $F$ is $K$-uniform if the corresponding ideal $F^-$ is $K$-uniform. \end{Def}

%\medskip In analogy to the above, we define a similar reducibility relation for $F$- and $G$-stationary sets.
%
%\begin{Def} Let $F,G$ be filters on $\dom(F)$ and $\dom(G)$, respectively. We say that $F^+$ is \emph{Katetov-reducible} to $G^+$, denoted by %$F^+ \leq_K G^+$, if  there is a map $\pi: dom(G) \to \dom(F)$ such that  $a \in F^+ \Rightarrow \pi^{-1}[a] \in G^+$. We also say that $F$ %is \emph{$K$-$(+)$-uniform} if for every $a \in F$, $F^+ \till a \leq_K F^+$. \end{Def} 
% 
%Again, notice that if $\pi$ is a witness to $F^+ \leq_K G^+$, then $a \in G \Rightarrow \pi[a] \in F$.  
 
\begin{Lem} \label{cc} Suppose $G \till a \leq_K F$  for all $a \in G^+$. Then $\G(\ILFF) \Rightarrow \G(\ILGG)$. In particular, this holds if $G$ is $K$-uniform and $G \leq_K F$. \end{Lem}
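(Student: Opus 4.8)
The plan is to transfer $\ILGG$-measurability of a $\G$-set $A$ to $\ILFF$-measurability of a suitably transformed set, using a Katetov map. Let $\pi : \dom(F) = \omega \to \dom(G) = \omega$ witness $G \leq_K F$, so that $a \in G \Rightarrow \pi^{-1}[a] \in F$, and (by the Remark) $a \in F^+ \Rightarrow \pi[a] \in G^+$. Extend $\pi$ coordinatewise to a continuous map $\hat\pi : \ww \to \ww$ by $\hat\pi(x)(n) := \pi(x(n))$; this induces a tree map as well. The key observation I would establish first is that $\hat\pi$ sends $\ILFF$-conditions to (subtrees of) $\ILGG$-conditions: if $T \in \ILFF$ then for $\sigma \in T$ above the stem, $\Succ_T(\sigma) \in F^+$, so $\pi[\Succ_T(\sigma)] \in G^+$, and hence the downward closure of $\{\hat\pi(\sigma) \mid \sigma \in T\}$ contains an $\ILGG$-tree with stem $\hat\pi(\stem(T))$. (One must be slightly careful because $\pi$ need not be injective, so the image tree may have more branching than expected, but it always \emph{contains} the required $G^+$-sets of successors, which is all we need.) Conversely, preimages under $\hat\pi$ of $\ILGG$-trees contain $\ILFF$-trees, using $a \in G^+$... actually more directly: for $R \in \ILGG$ and any $\sigma$ with $\hat\pi(\sigma) \in R$ above $\stem(R)$, the set $\{n \mid \pi(n) \in \Succ_R(\hat\pi(\sigma))\} = \pi^{-1}[\Succ_R(\hat\pi(\sigma))]$ is $F$-positive since $\Succ_R(\hat\pi(\sigma)) \in G^+$ and $\pi$ is Katetov (note $a \in G^+ \Rightarrow \pi^{-1}[a] \in F^+$, which follows from $G \leq_K F$ applied to the ideals). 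This lets us build, inside $\hat\pi^{-1}[R]$, an $\ILFF$-tree.

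The core argument then runs as follows. Fix a $\G$-set $A \subseteq \ww$ and a condition $T \in \ILGG$; we want $S \leq T$ in $\ILGG$ with $[S] \subseteq A$ or $[S] \cap A = \varnothing$. First, using the branching-reduction built into the $K$-uniformity hypothesis — here is where the \emph{full} hypothesis ``$G \till a \leq_K F$ for all $a \in G^+$'' is needed, not just $G \leq_K F$ — I would arrange an $\ILGG$-tree $T' \leq_0 T$ such that $T'$ is ``$\pi$-amenable'': there is $T^* \in \ILFF$ with $\hat\pi[[T^*]] = [T']$, i.e., $T'$ is literally the $\hat\pi$-image of an $\ILFF$-tree. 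Concretely, at each node $\sigma$ of $T'$ above the stem, we thin $\Succ_{T'}(\sigma)$ to a $G$-positive subset $a_\sigma$ on which there is a Katetov witness $G \till a_\sigma \leq_K F$, giving us a way to pull successors back coherently. Then the set $B := \hat\pi^{-1}[A] \cap [T^*]$ is $\G$ (continuous preimage of $\G$ intersected with a closed set, so still in $\G$ since $\G$ is a boldface pointclass closed under such operations), hence $\ILFF$-measurable by $\G(\ILFF)$. Applying $\ILFF$-measurability to $T^*$, we get $S^* \leq T^*$ in $\ILFF$ with $[S^*] \subseteq^* \hat\pi^{-1}[A]$ or $[S^*] \cap \hat\pi^{-1}[A] =^* \varnothing$, modulo $\I_\ILFF = \N_\ILFF$; by Lemma \ref{b}(4) we may take these to be genuine inclusions, $[S^*] \subseteq \hat\pi^{-1}[A]$ or $[S^*] \cap \hat\pi^{-1}[A] = \varnothing$. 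Now push forward: $S := $ (the $\ILGG$-tree generated by) $\hat\pi[S^*]$ is below $T'$ in $\ILGG$, and since $\hat\pi[S^*] \subseteq A$ in the first case (as $x \in [S^*] \Rightarrow \hat\pi(x) \in \hat\pi^{-1}[A]$'s image $\subseteq A$) and $\hat\pi[[S^*]] \cap A = \varnothing$ in the second, we are done.

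The step I expect to be the main obstacle is the ``$\pi$-amenability'' thinning — ensuring that after passing to $T'$, the tree is \emph{exactly} a $\hat\pi$-image and that pushing an $\ILFF$-subtree of $T^*$ forward again lands below $T'$ in $\ILGG$ rather than merely in some larger forcing. The non-injectivity of $\pi$ is the source of the trouble: $\hat\pi[S^*]$ could collapse distinct nodes and the successor sets $\pi[\Succ_{S^*}(\cdot)]$ must be checked to lie in $G^+$ (which they do, by the Remark) and to refine $\Succ_{T'}(\cdot)$. The $K$-uniformity of $G$ together with $G \leq_K F$ is what guarantees we can do this uniformly at every node — this is precisely the ``In particular'' clause, and I would present that special case first as the clean version, then remark that only the pointwise hypothesis $G \till a \leq_K F$ for $a \in G^+$ is actually used. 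A secondary technical point to handle carefully: verifying that $\hat\pi^{-1}[\N_\ILGG] \subseteq \N_\ILFF$ and $\hat\pi[\N_\ILFF]$-related inclusions, so that the ``$=^*$'' and ``$\subseteq^*$'' can be cleaned up — but Lemma \ref{b}(4) already reduces $\ILFF$-measurability to genuine (non-modulo) inclusions, so this mostly evaporates.
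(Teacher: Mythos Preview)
Your overall strategy---pull $A$ back through a continuous map into $[T]$, apply $\ILFF$-measurability there, then push the resulting tree forward---is exactly the paper's. The gap is in the execution: you try to do everything with a \emph{single} Katetov witness $\pi$ for $G \leq_K F$ and the coordinatewise map $\hat\pi$, and this cannot work.

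The specific error is the claim ``$a \in G^+ \Rightarrow \pi^{-1}[a] \in F^+$, which follows from $G \leq_K F$ applied to the ideals.'' It does not. The Katetov condition gives $a \in G \Rightarrow \pi^{-1}[a] \in F$, and by complements $a \in G^- \Rightarrow \pi^{-1}[a] \in F^-$; the contrapositive of the latter is $\pi^{-1}[a] \in F^+ \Rightarrow a \in G^+$, which is the wrong direction. Concretely: take $G = \Cof$, $F$ any non-principal ultrafilter, $\pi$ the identity. Then $G \leq_K F$, and the full hypothesis $G\till a \leq_K F$ also holds for every infinite $a$ (via any finite-to-one map $\omega \to a$); yet for any infinite $a$ with $\omega \setminus a \in F$ we have $a \in G^+$ but $\pi^{-1}[a] = a \notin F^+$. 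So inside $\hat\pi^{-1}[[T]]$ there need not be any $\ILFF$-tree, and your ``thinning'' step cannot help: if $\Succ_T(\sigma) \subseteq a$ for such an $a$, every $G^+$-subset of it is still $F^-$.

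The paper's fix is to use \emph{node-dependent} witnesses from the outset and to build the continuous map recursively from them, rather than coordinatewise. For each $\sigma \in T$ extending the stem, fix $\pi_\sigma : \omega \to \Succ_T(\sigma)$ witnessing $G \till \Succ_T(\sigma) \leq_K F$; then set $f'(\varnothing) := \stem(T)$ and $f'(\tau \cc \langle n\rangle) := f'(\tau) \cc \langle \pi_{f'(\tau)}(n)\rangle$, and let $f$ be the limit. Now $f[\ww] \subseteq [T]$ automatically, $A' := f^{-1}[A] \in \G$, and for any $S \in \ILFF$ the successor images $\pi_{f'(\sigma)}[\Succ_S(\sigma)]$ lie in $(G \till \Succ_T(f'(\sigma)))^+ \subseteq G^+$ by the Remark after the definition of $\leq_K$. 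After the injectivity-pruning you already anticipate, $f[[S^*]]$ is the body of an $\ILGG$-tree below $T$. No preliminary thinning of $T$ is needed, since the hypothesis already covers every $\Succ_T(\sigma) \in G^+$.
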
 
 
\begin{proof} Let $A \in \G$ and $T \in \ILGG$ arbitrary. For all $\sigma \in T$ extending $\stem(T)$, let $X_\sigma := \Succ_T(\sigma)$ and fix $\pi_\sigma$ witnessing $G \till X_\sigma \leq_K F$. Define $f': \wlw \to \wlw$ by $f'(\varnothing) := \stem(T)$ and $f'(\tau \cc \left<n\right>) := f'(\tau) \cc \left<\pi_{f'(\tau)}(n)\right>$, and let $f: \ww \to \ww$ be the limit of $f'$. Let $A' := f^{-1}[A]$. Then $A \in \G$, so by assumption there is an $S \in \ILFF$ such that $[S] \subseteq A'$ or $[S] \cap A' = \varnothing$, without loss of generality the former.

\p By assumption, we know that for every $\sigma \in S$ extending $\stem(S)$, $\pi_{f'(\sigma)}[\Succ_S(\sigma)] \in G^+$. To make sure that the image under $f$ is an $\ILGG$-tree, prune $S$ to $S^* \subseteq S$, so that $\stem(S^*) = \stem(S)$, and for all $\sigma \in S^*$ extending $\stem(S^*)$, $\pi_{f'(\sigma)}[\Succ_{S^*}(\sigma)] =  \pi_{f'(\sigma)}[\Succ_{S}(\sigma)]$, and $\pi_{f'(\sigma)} \till \Succ_{S^*}(\sigma)$ is injective. Then $f[S^*]$ is the set of branches through an $\ILFF$-tree, and moreover $f[S^*] \subseteq [T] \cap A$. \end{proof}

\begin{Lem} Suppose $G \till a \leq_K F$  for all $a \in G^+$. Then $\G(\ILF) \Rightarrow \G(\ILGG)$. In particular, if $F$ is $K$-uniform then $\G(\ILF) \Rightarrow \G(\ILFF)$ \end{Lem}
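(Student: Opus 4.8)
The statement to prove is: if $G \till a \leq_K F$ for all $a \in G^+$, then $\G(\ILF) \Rightarrow \G(\ILGG)$, and in particular if $F$ is $K$-uniform then $\G(\ILF) \Rightarrow \G(\ILFF)$. The plan is to imitate the proof of Lemma \ref{cc}, replacing the ``outer'' forcing $\ILFF$ by $\ILF$ throughout, and then to notice that the argument still produces a $G^+$-Laver tree inside the target $\G$-homogeneous set. So, given $A \in \G$ and $T \in \ILGG$, I would again choose for every node $\sigma \in T$ extending $\stem(T)$ a Katetov map $\pi_\sigma$ witnessing $G \till \Succ_T(\sigma) \leq_K F$, build the node-map $f' : \wlw \to \wlw$ with $f'(\varnothing) := \stem(T)$ and $f'(\tau \cc \langle n \rangle) := f'(\tau) \cc \langle \pi_{f'(\tau)}(n) \rangle$, take the limit $f : \ww \to \ww$, and set $A' := f^{-1}[A]$.

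The key divergence from Lemma \ref{cc} is the nature of the hypothesis: here we assume $\G(\ILF)$ rather than $\G(\ILFF)$, so we know $A'$ is $\ILF$-measurable, i.e.\ (unwinding Definition \ref{a}(3)) there is $S \in \ILF$ with $[S] \subseteq^* A'$ or $[S] \cap A' =^* \varnothing$ modulo $\I_\ILF$. Passing to the ``exactly'' version requires care: using Lemma \ref{35}(1), $\D_\FF \subseteq \N_\ILF \subseteq \I_\ILF$, but actually we want to go the other way — we have an $\ILF$-tree $S$ and an $\I_\ILF$-small exceptional set, and we want to shrink $S$ to an $\ILF$-subtree avoiding (or contained in) the exceptional set exactly. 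For the case $[S] \cap A' =^* \varnothing$: the exceptional set is $\tau_\ILF$-meager, so by the Baire category theorem for $\tau_\ILF$ (the first Lemma of Section \ref{Sec2}) we can find $S' \leq S$ in $\ILF$ with $[S']$ disjoint from it, hence $[S'] \cap A' = \varnothing$ genuinely. For the case $[S] \subseteq^* A'$: the complement $[S] \setminus A'$ is $\tau_\ILF$-meager, and again by the same Baire argument we thin $S$ to $S' \leq S$ with $[S'] \subseteq A'$ genuinely. (Alternatively, since $A' \in \G$ is itself $\ILF$-measurable and we can re-run the argument, but the cleanest route is the meager-avoidance via Baire category in $\tau_\ILF$.)

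Once we have $S' \in \ILF$ with, say, $[S'] \subseteq A'$, the rest is exactly as in Lemma \ref{cc}: for every $\sigma \in S'$ extending $\stem(S')$ we have $\Succ_{S'}(\sigma) \in F$, hence $\Succ_{S'}(\sigma) \in F^+$, so $\pi_{f'(\sigma)}[\Succ_{S'}(\sigma)] \in G^+$ by the Katetov property; prune $S'$ to $S^*$ so that $\pi_{f'(\sigma)}$ is injective on $\Succ_{S^*}(\sigma)$ while keeping $\pi_{f'(\sigma)}[\Succ_{S^*}(\sigma)] = \pi_{f'(\sigma)}[\Succ_{S'}(\sigma)] \in G^+$; then $f[S^*]$ is the branch set of a $G^+$-Laver tree contained in $[T] \cap A$, giving the required $\ILGG$-witness. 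The dual case ($[S'] \cap A' = \varnothing$) similarly yields a $G^+$-Laver tree in $[T] \setminus A$. For the ``in particular'' clause, when $F$ is $K$-uniform and $G = F$, the hypothesis $F \till a \leq_K F$ for all $a \in F^+$ is exactly $K$-uniformity, so $\G(\ILF) \Rightarrow \G(\ILFF)$ follows immediately.

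The main obstacle I anticipate is the bookkeeping in the ``modulo $\I_\ILF$'' step — specifically justifying that one can always shrink an $\ILF$-tree $S$ to an $\ILF$-subtree $S'$ whose branch set avoids a prescribed $\tau_\ILF$-meager set, \emph{while} the pruning to achieve injectivity of the $\pi$-maps does not reintroduce meager obstructions. This is handled by doing the two prunings in the right order (first avoid the meager set via the Baire category construction, which builds $S'$ with strictly increasing stems, then prune for injectivity, which only removes successors and cannot enlarge $[S']$), but one must check that the injectivity-pruning still leaves each successor set in $F$ (not merely $F^+$) — and here we use that $F$ is a \emph{filter}, so shrinking within an $F$-set to a set with the same image under a finite-to-one... no: $\pi_{f'(\sigma)}$ need not be finite-to-one, so injectivity-pruning of an $F$-set may leave only an $F^+$-set, not an $F$-set. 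Hence the output tree genuinely lands in $\ILGG$ and not in $\ILG$, which is exactly what the statement claims; the apparent obstacle dissolves once one notes the conclusion is about $\ILGG$.
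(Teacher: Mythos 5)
Your setup (the node-map $f'$, the pull-back $A' = f^{-1}[A]$, and the final injectivity-pruning so that the image is an $\ILGG$-tree) matches the paper's proof, but the middle step is wrong. You claim that from $S \in \ILF$ with $[S] \subseteq^* A'$ (or $[S] \cap A' =^* \varnothing$) modulo $\I_\ILF$, the Baire category theorem for $\tau_\ILF$ lets you shrink $S$ to $S' \in \ILF$ with $[S'] \subseteq A'$ (resp.\ $[S'] \cap A' = \varnothing$) \emph{exactly}. The Baire category construction only produces a single branch avoiding a given meager set (the limit of the stems), not a whole subtree; and the principle you actually need --- that every $\tau_\ILF$-meager set can be avoided by $[S']$ for some $\ILF$-subtree $S'$ of any given condition --- is exactly the statement $\I_\ILF = \N_\ILF$, which by the paper's own lemma (the $\ILF$-analogue of Lemma \ref{b}) holds if and only if $F$ is an ultrafilter. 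Concretely, for $F = \Cof$ (Hechler forcing) and $Z$ infinite and co-infinite, the set $A = \{x \in \ww \mid \forall^\infty m \: (x(m) \in Z)\}$ is in $\I_\ILF$, yet every Hechler tree contains a branch lying in $A$; so no subtree has branch set disjoint from $A$, and dually no subtree sits exactly inside the co-meager complement. So the ``exact'' trees $S'$ you build need not exist, and the injectivity-pruning worry you discuss at the end is not the real obstacle.

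The paper closes this gap by a different device, and this is precisely where Miller's dichotomy is indispensable: since $\I_\ILF$ is a ccc, Borel-generated ideal, from $[U] \setminus A' \in \I_\ILF$ one extracts a \emph{Borel} $\I_\ILF$-positive set $B \subseteq A' \cap [U]$; by Lemma \ref{35}(1), $B \notin \D_\FF$; and then Theorem \ref{dichotomy} gives $S \in \ILFF$ with $[S] \subseteq B \subseteq A'$ (note: only an $F^+$-Laver tree is produced, which suffices, since the image is only required to be in $\ILGG$), after which the push-forward argument of Lemma \ref{cc} finishes the proof, with the disjoint case handled symmetrically. Without invoking the dichotomy (or some substitute for it), there is no route from ``$\I_\ILF$-positive modulo meager'' to an actual tree contained in the set, so your proposal as written has a genuine gap.
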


\begin{proof}  Let $A \in \G$ and $T \in \ILGG$ be arbitrary. Let $f$ and $A' := f^{-1}[A]$ be as above. By the same argument, %$X_\sigma := \Succ_T(\sigma)$, and let 
%$f$ be the continuous map from $\ww$ to $[T]$ defined as in the proof above, using witnesses $\pi_\sigma$ to $G \till X_\sigma \leq_K F$. Let $A' := f^{-1}[A]$. Note that, by an analogous argument as above, 
it suffices to find $S \in \ILFF$ such that $[S] \subseteq A'$ or $[S] \subseteq A'$. 

\p By assumption, there is an $\ILF$-tree $U$ with $[U] \setminus A' \in \I_\ILF$ or $[U] \cap A' \in \I_\ILF$, without loss of generality the former. Since $\I_\ILF$ is Borel-generated, let $B$ be a Borel $\I_\ILF$-positive set such that $B \subseteq A' \cap [U]$. By Lemma \ref{35} $B$ is also $\D_{F^+}$-positive. But then, by Theorem \ref{dichotomy} there exists an $S \in \ILFF$ such that $[S] \subseteq B$, which completes the proof. \end{proof}

\begin{Lem} Suppose $G \leq_{RK} F$. Then $\G(\ILF) \Rightarrow \G(\ILG)$. %In particular, this holds if $F$ is $K$-$(+)$-uniform and $F \leq_K G$. 
\end{Lem}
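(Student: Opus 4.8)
The plan is to transfer measurability along the Rudin-Keisler reduction $\pi: \omega \to \omega$ witnessing $G \leq_{RK} F$, in the same spirit as Lemma \ref{cc} and the previous lemma, but now working entirely with the topological/ccc forcing $\ILF$ on the ``source'' side and $\ILG$ on the ``target'' side. Given $A \in \G$ and an arbitrary $T \in \ILG$, I would first, for each $\sigma \in T$ extending $\stem(T)$, use that $\pi^{-1}[\Succ_T(\sigma)] \in F$ (this is exactly the defining property of $G \leq_{RK} F$, applied to $\Succ_T(\sigma) \in G$) to build an $\ILF$-tree $U$ together with a map $f: [U] \to [T]$ induced by $\pi$ levelwise. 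Concretely, define $f': \wlw \to \wlw$ recursively by $f'(\varnothing) := \stem(T)$ and $f'(\tau \cc \left<n\right>) := f'(\tau) \cc \left<\pi(n)\right>$ whenever $\pi(n) \in \Succ_T(f'(\tau))$, and let $U$ be the tree of all $\tau$ on which $f'$ is defined, with $f: [U] \to \ww$ its limit; then $U \in \ILF$ since $\Succ_U(\tau) = \pi^{-1}[\Succ_T(f'(\tau))] \in F$.

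Next, let $A' := f^{-1}[A] \subseteq [U]$. Since $f$ is continuous and $A \in \G$, also $A' \in \G$ (here I use that $\G$ is closed under continuous preimages and under intersection with the closed set $[U]$). By the assumption $\G(\ILF)$, the set $A'$ is $\ILF$-measurable, so there is $S \leq U$ in $\ILF$ with $[S] \subseteq^* A'$ or $[S] \cap A' =^* \varnothing$, modulo $\I_\ILF$; without loss of generality the former. Using that $\I_\ILF$ is Borel-generated and satisfies the Baire category theorem (so $[S] \notin \I_\ILF$), I can further shrink $S$ — passing through a Borel $\I_\ILF$-positive subset of $A' \cap [S]$, and then by density of $\ILF$ in $\B(\ww)/\I_\ILF$ — to an $\ILF$-tree $S' \leq S$ with genuinely $[S'] \subseteq A'$. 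Equivalently, one can run the usual fusion argument directly on $\ILF$ to thin out $S$ so that no branch lands in the meager exceptional set.

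Then I would push $S'$ forward under $f$. Because $\pi$ may fail to be injective on a given $\Succ_{S'}(\sigma)$, I first prune $S'$ to $S^* \leq S'$ so that $\stem(S^*) = \stem(S')$ and, for every $\sigma \in S^*$ extending the stem, $\pi \till \Succ_{S^*}(\sigma)$ is injective while $\pi[\Succ_{S^*}(\sigma)] = \pi[\Succ_{S'}(\sigma)]$ — this is possible since we may select one $F$-positive, indeed $F$-large after the RK reduction, transversal; more precisely, since $\Succ_{S'}(\sigma) \in F$ and $F$-largeness of the image is guaranteed by the Remark following the definition of $\leq_{RK}$ ($a \in F \Rightarrow \pi[a] \in G$), we get $\pi[\Succ_{S^*}(\sigma)] \in G$. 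Now $f \till [S^*]$ is a homeomorphism onto its image, $f[S^*] = [W]$ for an $\ILG$-tree $W$ with $\stem(W) = \stem(T)$ and $W \leq T$, and $f[S^*] \subseteq A \cap [T]$. Thus $[W] \subseteq A$, establishing $\ILG$-measurability of $A$ witnessed at $T$.

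The step I expect to be the main obstacle is the shrinking of $S$ to a tree $S'$ with $[S'] \subseteq A'$ \emph{exactly} (not merely mod $\I_\ILF$) while remaining an $\ILF$-condition refining $U$: one must verify that discarding the meager exceptional set can be done inside $\ILF$, which relies on Borel-generatedness of $\I_\ILF$ together with Theorem \ref{dichotomy}/density of $\ILF$ — this is exactly the manoeuvre already used in the proof of the previous lemma, so it should go through, but it is the place where some care is needed. A minor secondary point is checking that the pruning to make $\pi$ injective on successor sets does not destroy $F$-largeness of those sets; this follows from the RK (as opposed to merely Katetov) hypothesis.
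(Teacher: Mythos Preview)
Your approach has a genuine gap at the step you yourself flag as the obstacle. You claim that from $S \in \ILF$ with $[S] \subseteq^* A'$ (mod $\I_\ILF$) one can shrink to an $\ILF$-tree $S'$ with $[S'] \subseteq A'$ \emph{exactly}. This is false in general: the paper explicitly constructs (in the proof of Lemma~2.5, the set $A = \{x \mid \forall^\infty m\; x(m)\in Z\}$) a Borel $\ILF$-measurable set such that no $\ILF$-condition is contained in $A$ or disjoint from $A$. Your two proposed justifications both fail: (i) density of $\ILF$ in $\B(\ww)/\I_\ILF$ only yields $[S']\setminus B \in \I_\ILF$, never $[S']\subseteq B$; (ii) there is no ``usual fusion argument'' for $\ILF$ --- Lemma~2.5 shows that properties (1)--(4) of Lemma~\ref{b} all fail for $\ILF$ unless $F$ is an ultrafilter. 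The ``manoeuvre from the previous lemma'' you invoke is also not what you think: there the paper applies Miller's dichotomy (Theorem~\ref{dichotomy}) to obtain a tree in $\ILFF$, not $\ILF$; and pushing an $\ILFF$-tree forward via $\pi$ lands in $\ILGG$, not $\ILG$, so this route proves the wrong conclusion.

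The paper's argument avoids this trap by never attempting exact inclusion on the $\ILF$ side. Instead it works topologically throughout: it shows that the coordinatewise map $f(x)(n)=\pi(x(n))$ is continuous and \emph{open} from $(\ww,\tau_\ILF)$ to $(\ww,\tau_\ILG)$, whence $f$-preimages of $\I_\ILG$-sets are in $\I_\ILF$. Then, given a Borel $\I_\ILF$-positive $B \subseteq A' \cap U'$, the image $f[B]$ is an \emph{analytic} subset of $A\cap O$ which is $\I_\ILG$-positive (since $B \subseteq f^{-1}[f[B]]$). Now one invokes the $\tau_\ILG$-Baire property for analytic sets (Theorem~\ref{c}) to get an open $U$ with $U \subseteq^* f[B] \subseteq A$. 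The key conceptual difference is that the paper stays in the ``modulo ideal'' world and uses regularity of analytic sets on the target side, rather than trying to manufacture exact tree-inclusions on the source side --- which, for the ccc forcing $\ILF$, is simply unavailable.
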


\begin{proof} Let $\pi$ witness $G \leq_{RK} F$ and let $f: \ww \to \ww$ be defined by $f(x)(n) := \pi(x(n))$. Clearly $f$ is continuous in the standard sense. Moreover, we claim the following:

\s {\textbf Claim.} \emph{$f$ is continuous and open as a function from $(\ww, \tau_{\ILF})$ to $(\ww, \tau_{\ILG})$}.

\begin{proof} If $[T]$ is a basic open set in $\tau_\ILG$, then $T \in \ILG$ and so $f^{-1}[T]$ is a union of $\ILF$-trees (one for each $f$-preimage of the stem of $T$), so it is open in $\tau_\ILF$. Conversely, if $[S]$ is basic open in $\tau_\ILF$, then $S \in \ILF$. Although $f[S]$ is not necessarily a member of $\ILG$, we can argue as follows: given $y \in f[S]$, let $x \in [S]$ be such that $f(x) = y$. Then prune $S$ to $S^*$ in a similar way as in the proof of Lemma \ref{cc}, in such a way that the function $\pi$ restricted to $\Succ_{S^*}(\sigma)$ is injective for each $\sigma$ while the image $\pi[\Succ_{S^*}(\sigma)]$ remains unchanged. Moreover, we can do this so that $x \in [S^*]$. Then $f[S^*]$ is indeed an $\ILG$-tree, and moreover $y \in f[S^*] \subseteq f[S]$. Since this can be done for every $y \in f[S]$, it follows that $f[S]$ is open in $\tau_\ILG$. \hfill \renewcommand{\qed}{$\square$ (Claim)}\end{proof}

\p From this, it is not hard to conclude that if $A \in \I_\ILG$ then $f^{-1}[A] \in \I_\ILF$. To complete the proof, let $A \in \G$ and let $O$ be $\tau_\ILG$-open. It suffices to find a non-empty $\tau_\ILG$-open $U \subseteq O$ such that $U \subseteq^* A$ or $U \cap A =^* \varnothing$, where $\subseteq^*$ and $=^*$ refers to ``modulo $\I_\ILG$.

\p  Let $A' := f^{-1}[A]$ and $O' := f^{-1}[O]$. Since $A'$ has the Baire property in $\tau_\ILF$, there is an open $U' \subseteq O'$ such that $U \subseteq^* A'$ or $U \cap A' =^* \varnothing$  (wlog the former) where $\subseteq^*$ and $=^*$ refers to ``modulo $\I_\ILF$''. Then there is a Borel set $B$ such that $B \notin \I_\ILF$ and $B \subseteq A' \cap U'$. Hence $f[B]$ is an analytic subset of $A \cap O$, and by the Claim, $f[B] \notin \I_\ILG$. By the $\tau_\ILG$-Baire property of analytic sets, there is an $\tau_\ILG$-open $U$ such that $U \subseteq^* f[B]$. Hence $U \cap O \subseteq^* A'$, which completes the proof. \end{proof}
 
The relationships established in the above three theorems are summarised in Figure \ref{fig}.

\begin{figure}[here] 
$$
\xymatrix@C=1.5cm@R=1.2cm{
%\xymatrix{
\G(\ILG)\ar@{=>}^{G \ K\text{-uniform}}[rr] & &G(\ILGG) \\  & & & (*)\quad \forall a \in G^+ (G \till a \leq_K F) \\
\G(\ILF) \ar@{=>}^{F \ K\text{-uniform}}[rr] \ar@{=>}^{G \leq_{RK} F}[uu] \ar@{=>}^{(*)}[uurr]& &\ar@{=>}_{(*)}[uu]\G(\ILFF) }  
$$\caption{Implications between the properties for filters $F$ and $G$.}
 \label{fig} \end{figure}
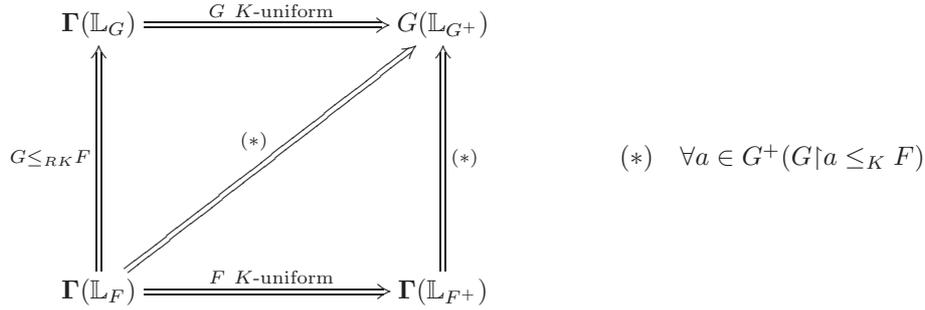

\bigskip In particular, since $\Cof  \till a \leq_K F$ holds for every $F$ and every infinite $a$, we obtain the following corollary:

\begin{Cor} \label{cor1} $\G(\ILF) \Rightarrow \G(\IL)$ and $\G(\ILFF) \Rightarrow \G(\IL)$ for all $F$. \end{Cor}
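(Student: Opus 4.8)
The plan is to derive Corollary \ref{cor1} directly from the three lemmas summarised in Figure \ref{fig}, together with the observation that the cofinite filter $\Cof$ is, in the relevant sense, minimal: it is Katetov-reducible to every filter $F$ on $\omega$, and moreover $\Cof \till a \leq_K F$ for every $F$ and every infinite $a \subseteq \omega$. So the first thing I would do is make that observation precise. Given an infinite $a \in F^+$, fix any bijection $\pi: \omega \to a$ (or more generally any injection whose range is $a$); I claim this witnesses $\Cof \till a \leq_K F$. Indeed, if $b \in \Cof \till a$, then $a \setminus b$ is finite, so $\pi^{-1}[b]$ is cofinite in $\omega$, hence in $F$ since $F$ extends $\Cof$. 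This is the only genuinely new content; everything else is bookkeeping.

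Next I would apply Lemma \ref{cc} with $G := \Cof$. Its hypothesis is ``$G \till a \leq_K F$ for all $a \in G^+$'', and $G^+ = \Cof^+$ is exactly the collection of infinite subsets of $\omega$, so the hypothesis holds by the paragraph above. Lemma \ref{cc} then yields $\G(\ILFF) \Rightarrow \G(\IL_{\Cof^+}) = \G(\IL)$, since $\IL_{\Cof^+}$ is the standard Laver forcing $\IL$ as noted right after Definition \ref{a}. That gives the second half of the corollary. For the first half, I would invoke the Lemma stating ``$G \till a \leq_K F$ for all $a \in G^+$ implies $\G(\ILF) \Rightarrow \G(\ILGG)$'', again with $G := \Cof$: the same hypothesis holds, so $\G(\ILF) \Rightarrow \G(\IL_{\Cof^+}) = \G(\IL)$. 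Alternatively one can route through Figure \ref{fig} via $\G(\ILF) \Rightarrow \G(\ILFF) \Rightarrow \G(\IL)$, but the direct application is cleaner and avoids needing $F$ to be $K$-uniform.

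I do not expect any real obstacle here. The one point that merits a sentence of care is the identification $\IL = \IL_{\Cof^+}$ and the fact that $\Cof^+$ really is ``all infinite sets'', so that $\Cof \till a$ makes sense precisely for infinite $a$ and the notation $F \till a$ from the paper applies. A second small point: strictly speaking the bijection $\pi: \omega \to a$ only gives a partial function if one insists $\dom(\pi) = \dom(F) = \omega$ and $\ran(\pi) \subseteq \dom(\Cof \till a) = a$, but since $\pi$ is onto $a$ this is exactly the setup in the Definition of Katetov reducibility, so no adjustment is needed. Thus the whole proof is: verify $\Cof \till a \leq_K F$ for all infinite $a$ via a bijection onto $a$, then quote the two lemmas. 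I would write it as a three-line argument.

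\begin{proof}
Fix a filter $F$ on $\omega$. We first check that $\Cof \till a \leq_K F$ for every infinite $a \subseteq \omega$. Let $\pi: \omega \to a$ be a bijection. If $b \in \Cof \till a$, then $a \setminus b$ is finite, so $\omega \setminus \pi^{-1}[b] = \pi^{-1}[a \setminus b]$ is finite, hence $\pi^{-1}[b] \in \Cof \subseteq F$. Thus $\pi$ witnesses $\Cof \till a \leq_K F$. Since $\Cof^+$ is exactly the collection of infinite subsets of $\omega$, the hypothesis ``$\Cof \till a \leq_K F$ for all $a \in \Cof^+$'' holds. Applying Lemma \ref{cc} with $G := \Cof$ gives $\G(\ILFF) \Rightarrow \G(\IL_{\Cof^+}) = \G(\IL)$, and applying the subsequent Lemma (``$G \till a \leq_K F$ for all $a \in G^+$ implies $\G(\ILF) \Rightarrow \G(\ILGG)$'') with $G := \Cof$ gives $\G(\ILF) \Rightarrow \G(\IL_{\Cof^+}) = \G(\IL)$, using that $\IL_{\Cof^+}$ is the standard Laver forcing $\IL$.
\end{proof}
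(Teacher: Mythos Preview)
Your proof is correct and follows exactly the same approach as the paper: the paper simply notes that $\Cof \till a \leq_K F$ holds for every filter $F$ and every infinite $a$, and then invokes the two lemmas from Figure~\ref{fig} with $G = \Cof$. You have merely supplied the (straightforward) verification of $\Cof \till a \leq_K F$ via a bijection $\pi:\omega\to a$, which the paper leaves implicit.
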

 
\newcommand{\NWD}{{\sf NWD}} 
 
Next, we look at the relationship between $\ILF$-measurability and the classical Baire property. In accordance to common usage, we denote the statement ``all sets in $\G$ have the Baire property'' by $\G(\IC)$ ($\IC$ denoting the Cohen forcing partial order). It is known that if $F$ is not an ultrafilter then $\ILF$ adds a Cohen real. Specifically, if $Z$ is such that $Z \notin F$ and $(\omega \setminus Z) \notin F$, and $f: \ww \to \dw$ is defined by 
$$f(x)(n) := \left\{ \begin{array}{ll} 1 & \text{ if } x(n) \in Z \\ 0 & \text{ if } x(n) \notin Z \end{array} \right. $$
then $f$ is continuous with the property that if $A$ is meager then $f^{-1}[A] \in \I_\ILF$.

\medskip
Concerning ultrafilters, the following is known.

\begin{Def} \label{nwdultrafilter} Let $\NWD \subseteq \dlw$ denote the ideal of \emph{nowhere dense subsets of $\dlw$}, that is, those $H \subseteq \dlw$ such that $\forall \sigma \; \exists \tau \supseteq \sigma \;\forall \rho \supseteq \tau \: (\rho \notin H)$. An ultrafilter $U$ is called \emph{nowhere dense} iff $\NWD \not\leq_K U^-$. \end{Def}
 
It is known that $\IL_U$ adds a Cohen real iff $U$ is not a nowhere dense ultrafilter. Specifically, if $U$ is not nowhere dense and $\pi:\omega \to \dlw$ is a witness to $\NWD \leq_K U^-$, then we can define a continuous function $f: \ww \to \dw$ by $f(x) := \pi(x(0)) \cc \pi(x(1)) \cc \dots $. We leave it to the reader to verify that if $A$ is meager then $f^{-1}[A] \in \I_{\IL_U}$. This easily leads to the following:

\begin{Lem} \label{lemcohen} If $F$ is not an ultrafilter, or a non-nowhere dense ultrafilter, then $\G(\ILF) \Rightarrow \G(\IC)$. \end{Lem}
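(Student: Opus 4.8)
The statement to prove is Lemma \ref{lemcohen}: if $F$ is not an ultrafilter, or is a non-nowhere-dense ultrafilter, then $\G(\ILF) \Rightarrow \G(\IC)$. The plan is to use the continuous reductions exhibited in the paragraphs immediately preceding the lemma. In both cases we have a continuous function $f \colon \ww \to \dw$ with the property that $A$ meager (in $\dw$) implies $f^{-1}[A] \in \I_\ILF$; in the non-ultrafilter case $f$ is the ``coding by $Z$'' map, and in the non-nowhere-dense ultrafilter case $f$ is the concatenation map built from a Katetov witness $\pi \colon \omega \to \dlw$ to $\NWD \leq_K U^-$. So the core of the argument is a single transfer lemma: given such an $f$, a pointclass $\G$, and the hypothesis $\G(\ILF)$, every $A \in \G$ has the Baire property.

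First I would fix $A \in \G$ (a subset of $\dw$) and consider $A' := f^{-1}[A]$. Since $\G$ is a boldface pointclass (closed under continuous preimages) and $f$ is continuous in the standard sense, $A' \in \G$, so by $\G(\ILF)$ the set $A'$ is $\ILF$-measurable, i.e.\ has the $\tau_\ILF$-Baire property. Now I want to push this down to a classical Baire property statement about $A$ in $\dw$. The clean way is to observe that $f$, viewed as a map from $(\ww, \tau_\ILF)$ to $\dw$ with its standard topology, is continuous (immediate, since $\tau_\ILF$ refines the standard topology and $f$ is standard-continuous) and, crucially, \emph{category-preserving in the relevant direction}: the stated property ``$A$ meager $\Rightarrow f^{-1}[A] \in \I_\ILF$'' says exactly that preimages of meager sets are $\tau_\ILF$-meager. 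I would then argue as follows. Given $A \subseteq \dw$ in $\G$: let $B \subseteq \dw$ be a Borel set with $A \triangle B$ meager iff $A$ has the Baire property — so I need to produce such a $B$. The standard device: since $A'$ has the $\tau_\ILF$-Baire property, write $A' = O' \triangle M'$ with $O'$ $\tau_\ILF$-open and $M' \in \I_\ILF$. This does not immediately descend because $f$ need not be open; instead I would use the image-of-a-positive-Borel-set trick already used twice in Section \ref{Sec4} (e.g.\ in the proof of the Lemma after Lemma \ref{cc} and in the $\leq_{RK}$ lemma): on a $\tau_\ILF$-non-meager Borel piece inside $A'$ (and likewise inside the complement), take the $f$-image, which is analytic, hence has the classical Baire property, and assemble a Borel set $B$ with $A \triangle B$ meager.

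More precisely, the cleanest route avoids messing with images altogether. Since $f^{-1}$ sends meager sets to $\tau_\ILF$-meager sets and $f$ is standard-continuous, the map $A \mapsto f^{-1}[A]$ respects ``Baire property'' in the following sense: if $A$ does \emph{not} have the classical Baire property, I want a contradiction with $A'$ having the $\tau_\ILF$-Baire property. But it is more direct to run it positively. By a theorem of the Ikegami/transitive-model type (or simply: the fact that Cohen-measurability of $\G$ follows from any forcing $\IP$ that adds a Cohen real together with $\G(\IP)$, which is the content of the standard ``adding a Cohen real'' transfer argument, cf.\ \cite{Ik10, BrLo99}), the existence of a continuous $f$ with $f^{-1}[\text{meager}] \subseteq \I_\ILF$, combined with $\ILF$-measurability of $\G$, yields the classical Baire property for $\G$. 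I would spell this out: given $A \in \G$ and a basic clopen $[s] \subseteq \dw$, pull back to get $A' \in \G$ and a $\tau_\ILF$-open set $O' := f^{-1}[[s]]$; by $\ILF$-measurability there is $T \in \ILF$ with $[T] \subseteq O'$ and $[T] \subseteq^* A'$ or $[T] \cap A' =^* \varnothing$ modulo $\I_\ILF$; transporting via $f$ as in Remark \ref{forcingfree}/Lemma \ref{cc} (prune $T$ so the relevant coordinate maps are injective with unchanged image, so that $f[T]$ contains a nonempty standard-open or at least a non-meager set), one gets a nonempty open $V \subseteq [s]$ with $V \subseteq^* A$ or $V \cap A =^* \varnothing$ modulo meager. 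Since $[s]$ was arbitrary, the characterization of the Baire property (the Fact quoted after Lemma 2.2, applied to $\dw$ with its standard topology) gives that $A$ has the Baire property, i.e.\ $\G(\IC)$.

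The main obstacle is the ``transport via $f$'' step: $f$ is not an open map, so one cannot directly say $f[T]$ is open, and one must instead produce a non-meager Borel (hence analytic) image and invoke the classical Baire property of analytic sets — exactly the maneuver used in the proofs in Section \ref{Sec4}. Once that maneuver is in hand the two cases ($F$ not an ultrafilter; $F$ a non-nowhere-dense ultrafilter) are handled uniformly, since in both cases the only thing used about $f$ is continuity plus ``$f^{-1}$ of meager is in $\I_\ILF$,'' both of which were established in the two paragraphs preceding the lemma. I would therefore state and prove the transfer as a one-paragraph sublemma — ``if there is a continuous $f\colon\ww\to\dw$ with $f^{-1}[\text{meager}]\subseteq\I_\ILF$ then $\G(\ILF)\Rightarrow\G(\IC)$'' — and then note that the preceding two paragraphs supply such an $f$ in each of the two cases.
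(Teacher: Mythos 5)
Your overall strategy coincides with the paper's: in both cases one has a continuous $f\colon\ww\to\dw$ whose preimages of meager sets lie in $\I_\ILF$, one pulls an arbitrary $A\in\G$ back, applies $\G(\ILF)$, extracts an $\I_\ILF$-positive Borel set $B$ contained in the pullback (or disjoint from it), and pushes it forward to an analytic non-meager subset of $A$ (or of its complement), finishing with the Baire property of analytic sets and the local characterization of the Baire property. The genuine gap is in your localization step. You localize inside a basic clopen $[s]\subseteq\dw$ by setting $O':=f^{-1}[[s]]$ and asserting that there is $T\in\ILF$ with $[T]\subseteq O'$. In the non-ultrafilter case this is fine (one can choose a stem whose entries lie in $Z$ or $\omega\setminus Z$ according to $s$), but in the non-nowhere-dense ultrafilter case nothing guarantees it: the map $f(x)=\pi(x(0))\cc\pi(x(1))\cc\dots$ built from a Katetov witness $\pi$ to $\NWD\leq_K U^-$ need not have range meeting $[s]$ at all. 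Indeed, if $\pi$ is any witness, then $\pi'(n):=\langle 0\rangle\cc\pi(n)$ is again a witness (the map $H\mapsto\{\tau\mid\langle 0\rangle\cc\tau\in H\}$ preserves nowhere density), and for the corresponding $f$ one has $f^{-1}[[\langle 1\rangle]]=\varnothing$, so your $O'$ is empty and the argument cannot start for that $[s]$. Since the Baire-property criterion you invoke requires a suitable basic open set inside \emph{every} basic clopen $[s]$, this is not cosmetic.

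The missing idea is how the paper localizes: rather than pulling $[s]$ back through $f$, it fixes a homeomorphism $\varphi\colon\dw\to[\sigma]$ and pulls $A$ back through $\varphi\circ f$; this composition still sends meager sets to $\I_\ILF$-small sets under preimage, and the pushed-forward analytic non-meager set now automatically lies inside $[\sigma]$, so the basic open set supplied by the Baire property of analytic sets sits inside $[\sigma]$ as required. (Alternatively, your version can be repaired by first normalizing $\pi$ so that its range is dense in $\dlw$ --- modify it on an infinite set in $U^-$ --- after which $f^{-1}[[s]]$ contains a full subtree above some stem for every $s$; but some such extra step is needed and is absent from your write-up.) A minor further point: the pruning-for-injectivity maneuver from Lemma \ref{cc} that you cite is not what is needed here; the operative device is exactly the analytic-image argument you also mention, namely that if $f[B]$ were meager then $B\subseteq f^{-1}[f[B]]\in\I_\ILF$, contradicting the positivity of $B$.
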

 
 \begin{proof} In either case, we have a continuous $f: \ww \to \dw$ such that $f$-preimages of meager sets are $\I_\ILF$-small, as above. Let $A \in \G$ and $\sigma \in \dlw$ arbitrary. Let $\varphi$ be a homeomorphism from $\dw$ to $[\sigma]$ and $A' := (\varphi \circ f)^{-1}[A]$. Then $A' \in \G$, so let $B$ be a Borel $\I_\ILF$-positive set with $B \subseteq A'$ or $B \cap A' = \varnothing$, without loss the former. Then $f[B]$ is an analytic non-meager subset of $A \cap [\sigma]$, so there exists $[\tau] \subseteq [\sigma]$ such that $[\tau]  \subseteq^* A$,  which is sufficient. \end{proof}
 
Finally, an argument from \cite{MillerMesses} yields the following implication. Recall that a set $A \subseteq \wuw$ is \emph{Ramsey} iff there exists $H \in \wuw$ such that $[H]^\omega \subseteq A$ or $[H]^\omega \cap A = \varnothing$.

\begin{Lem} If $U$ is an ultrafilter then $\G(\IL_U) \Rightarrow \G({\rm Ramsey})$. \end{Lem}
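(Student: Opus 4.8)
The plan is to reduce $\G(\mathrm{Ramsey})$ for an ultrafilter $U$ to $\G(\IL_U)$ by exhibiting a continuous map $f : \ww \to \wuw$ under which preimages of Ramsey-null sets (in the relevant relativized sense) are $\I_{\IL_U}$-small, and under which images of $\IL_U$-positive Borel sets contain a set of the form $[H]^\omega$. The natural candidate is the map sending $x \in \ww$ to the increasing enumeration of a set built from $x$ — concretely, interpret $x$ as a code for an infinite subset of $\omega$ whose successive ``blocks'' are dictated by the values $x(n)$, so that a branch through an $\IL_U$-tree with $U$-large splitting corresponds to a ``diagonal'' infinite set living above a prescribed sequence of $U$-large sets. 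This is exactly the kind of correspondence used implicitly in Miller's analysis of filter-Laver forcing, so I would cite \cite{MillerMesses} for the combinatorial core.

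First I would fix the standard correspondence between conditions in $\IL_U$ and Mathias-like conditions $(s, A)$ with $A \in U$: given $T \in \IL_U$, the stem together with the successor sets along the leftmost branch (or, more carefully, a fusion of the successor sets) yields a set $A \in U$ such that every sufficiently ``fast'' infinite subset of $A$ extending $\stem(T)$ gives a branch of (a pruning of) $T$. Conversely, from a Mathias condition one builds an $\IL_U$-tree. Using this, I would show: if $X \subseteq \wuw$ is such that no $H$ gives $[H]^\omega \subseteq X$, then the corresponding subset of $\ww$ misses $[S]$ for a suitable $S \leq T$ below any given $T$, i.e.\ is $\N_{\IL_U}$-small; since $F = U$ is an ultrafilter, Lemma \ref{b}(3) gives $\N_{\IL_U} = \I_{\IL_U}$ there — but note $\IL_U$ is ccc, so one must be slightly careful and instead argue directly that the preimage of a Ramsey-null (in the localized sense) set is $\tau_{\IL_U}$-nowhere dense, hence in $\I_{\IL_U}$.

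Then the argument runs as in Lemma \ref{lemcohen}: given $A \in \G$ and an arbitrary $H_0 \in \wuw$, relativize $f$ to land in $[H_0]^\omega$ (replacing $\omega$ by $H_0$ throughout, using that $U \restriction$-type localization still makes sense since $U$ is an ultrafilter and any $H_0 \in U$ works, or passing to $U \till H_0$ otherwise), set $A' := f^{-1}[A]$, apply $\G(\IL_U)$ to get a Borel $\I_{\IL_U}$-positive $B$ with $B \subseteq A'$ or $B \cap A' = \varnothing$, and observe that $f[B]$ is then an analytic subset of $A \cap [H_0]^\omega$ (resp.\ of its complement) which, by the positivity transfer, is not Ramsey-null; by the classical Ramsey property of analytic sets (Silver's theorem, or the Galvin–Prikry theorem) there is then $H \subseteq H_0$ with $[H]^\omega \subseteq A$ or $[H]^\omega \cap A = \varnothing$, which is what $\G(\mathrm{Ramsey})$ demands.

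The main obstacle I expect is making the map $f$ and the ``fast/diagonal'' correspondence precise enough that both directions work simultaneously: on one hand $f$-preimages of non-Ramsey (null) sets must be genuinely $\I_{\IL_U}$-small — which needs that $\IL_U$-trees produce sufficiently fast-growing branches relative to the successor sets, so a fusion/diagonalization argument is required rather than a naive leftmost-branch argument — and on the other hand images of $\IL_U$-positive Borel sets must still contain a full $[H]^\omega$, which needs the pruning trick (as in Lemma \ref{cc} and the Claim in the preceding lemma) to ensure injectivity of the relevant block-decoding map on the relevant successor sets. Getting the bookkeeping of stems versus blocks consistent between the two halves is the delicate point; everything else is routine transfer in the style of the paper's earlier lemmas, so I would state the combinatorial heart as a short claim and reference \cite{MillerMesses} for its proof.
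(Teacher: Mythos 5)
There is a genuine gap, and it sits exactly at the combinatorial heart you deferred to a ``short claim''. Your first transfer claim --- if no $H$ gives $[H]^\omega \subseteq X$ then $f^{-1}[X]$ is $\N_{\IL_U}$-small --- is false for any reasonable decoding map: take $X := \{Y \in \wuw \mid 0 \in Y\}$, which contains no $[H]^\omega$, yet its preimage under the range/block map contains $[T]$ for every $T \in \IL_U$ whose stem begins with $0$, and $[T] \notin \I_{\IL_U}$. Retreating to the localized (completely Ramsey null) ideal does not save the other half either: the image of an $\I_{\IL_U}$-positive Borel set need not meet every $[H]^\omega$ --- if $H \notin U$, the tree $T$ with all successor sets equal to $\omega \setminus H$ has $f[[T]] \cap [H]^\omega = \varnothing$ --- so when you apply Silver/Galvin--Prikry to the analytic set $f[B]$ you may simply get the useless alternative $[H]^\omega \cap f[B] = \varnothing$, which says nothing about $A$; the most that ``not completely Ramsey null'' yields is local homogeneity $[s,H] \subseteq A$ with a nonempty stem $s$, and that is not the conclusion $[H]^\omega \subseteq A$ that the Ramsey property (as defined in the paper) requires. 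A further problem is your Laver-to-Mathias correspondence: to replace a $U$-Laver tree by a single pair $(s,A)$ with $A \in U$ you must pseudointersect the countably many successor sets $\Succ_T(\sigma)$ \emph{inside} $U$, which needs $U$ to be (at least) a P-point and fails for arbitrary ultrafilters. Your fallback ``pass to $U \till H_0$'' also breaks down, since $U \till H_0$ is only a proper filter when $H_0 \in U$ (and localization below an arbitrary $H_0$ is not demanded by the paper's definition of Ramsey anyway).

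The missing idea, and the paper's actual route, is much more direct and avoids any ideal transfer or appeal to Silver. Since $U$ is an ultrafilter, $\IL_U$ coincides with $\IL_{U^+}$, so Lemma \ref{b}(4) gives \emph{exact} homogeneity: for $A \subseteq \wupw$ measurable there is $T \in \IL_U$ with empty stem and $[T] \subseteq A$ or $[T] \cap A = \varnothing$ (no error term modulo the ideal). Then one extracts a single infinite set by a diagonal construction: choose $n_0 \in \Succ_T(\varnothing)$, and at stage $k$ choose $n_k$ in the intersection of $\Succ_T(\sigma)$ over \emph{all} subsequences $\sigma$ of $\langle n_0,\dots,n_{k-1}\rangle$ lying in $T$; this is possible because $U$ is closed under finite intersections, and it arranges that every finite (hence every infinite) subsequence of $\langle n_i \mid i<\omega\rangle$ is a node (branch) of $T$. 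Setting $H := \{n_i \mid i<\omega\}$, the increasing enumeration of any member of $[H]^\omega$ lies in $[T]$, so both homogeneity cases for $T$ transfer verbatim to $[H]^\omega$ versus $\{\ran(x) \mid x \in A\}$. It is precisely this tree-to-$[H]^\omega$ extraction that your proposal hopes Silver's theorem will provide, and it cannot; if you want to salvage your write-up, replace the transfer-plus-Silver step by this fusion argument.
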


\begin{proof} In fact, we prove a stronger statement: if $A \subseteq \wupw$ (strictly increasing sequences) is $\IL_U$-measurable then $\{\ran(x) \mid x \in A\}$ is Ramsey. First note that, by Lemma \ref{b} (4), there exists a $T \in \IL_U$ with empty stem, such that  $[T] \subseteq A$ or $[T] \cap A = \varnothing$. Also, without loss of generality, we can assume that $[T] \subseteq \wupw$.

\p Now proceed inductively: \begin{itemize}

\item Let $n_0 \in \Succ_T(\varnothing)$ be arbitrary.

\item Let $n_1 \in \Succ_T(\varnothing) \cap \Succ_T(\left<n_0\right>)$.

\item Let $n_2 \in \Succ_T(\varnothing) \cap \Succ_T(\left<n_0\right>) \cap  \Succ_T(\left<n_1\right>) \cap  \Succ_T(\left<n_0, n_1\right>) $.

\item etc.
\end{itemize} Since $U$ is a filter we can always continue this process and make sure that for any $k$, any subsequence of the sequence $\left<n_0, \dots, n_k\right>$ is an element of $T$. It then follows that any infinite subsequence  of the sequence $\left<n_i \mid i<\omega\right>$  is an element of $[T]$. This is exactly what we need. \end{proof}
 
If  $U$ is not an ultrafilter, then the above result does not hold in general. For example, considering the cofinite filter, both implications $\G(\IL) \Rightarrow \G($Ramsey$)$ and $\G(\ID) \Rightarrow \G($Ramsey$)$ are consistently false for $\G = \DELTA^1_2$ (see \cite[Section 6]{CichonPaper}).

\section{Analytic filters} \label{Sec5}

In this section, we focus on analytic filters (or ideals). This is important if we want the forcings to be definable, and if we want to apply results from \cite{Ik10, KhomskiiThesis}. Note that just for absoluteness of the forcing, it would have been sufficient to consider $\SIGMA^1_2$ or $\PI^1_2$ filters, by Shoenfield absoluteness. However, we also require the ideals and other related notions to have a sufficiently low complexity. For this reason, in this section the following assumption will hold:

\medskip
\noindent \textbf{Assumption.} $F$ is an analytic filter on $\omega$.

%\medskip Notice that, aside of guaranteeing absoluteness, this assumption has the side-effect of restricting the types of filters that $F$ can be: for instance, $F$ can never be an ultrafilter. 

%It is easy to see that the complexity of the statement ``$T \in \ILF$'' is the same as that of $F$, and ``$T \in \ILFF$'' the dual of that of $F$. Moreover, for $S,T \in \ILF$ we know that $S \bot T$ iff $\stem(S) \bot \stem(T)$. This leads to the following observation:

\medskip
It is clear that the  statement ``$T \in \ILF$'' is as complex as $F$ itself. Recall from  \cite[Section 3.6]{BaJu95}) that a forcing notion is \emph{Suslin ccc} if it is ccc and the statements  ``$T \in \ILF$'', ``$T \:\bot\: S$'' and ``$S \leq T$'' are  $\SIGMA^1_1$-relations on the codes of trees. The following is clear:

\begin{Fact} Let $F$ be analytic. Then $\ILF$ is a Suslin ccc forcing notion. \end{Fact}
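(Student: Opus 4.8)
The plan is to verify the ccc together with the three complexity clauses; the latter is where the work lies. Since it has already been noted that $\ILF$ is $\sigma$-centered — two conditions sharing a stem $\sigma$ are refined by their intersection, whose successor sets above $\sigma$ are finite intersections of members of $F$, hence in $F$ and nonempty as $F$ is proper — the ccc is immediate, and it remains to bound the complexity of ``$T \in \ILF$'', ``$S \leq T$'' and ``$T \bot S$'' as relations on codes of trees $T \subseteq \wlw$, coded as elements of $2^\omega$ via a fixed recursive identification $\wlw \cong \omega$.

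For ``$T \in \ILF$'': being downward closed and having a well-defined stem are arithmetical conditions on the code (and an $F$-Laver tree is automatically pruned, since $F$ is proper), while the map $(T,\sigma) \mapsto \Succ_T(\sigma) \subseteq \omega$ is recursive; hence ``$\Succ_T(\sigma) \in F$'' is $\SIGMA^1_1$ uniformly in the code of $T$ and in $\sigma$. The defining clause ``for every $\sigma \in T$ with $\sigma \supseteq \stem(T)$, $\Succ_T(\sigma) \in F$'' is therefore a countable intersection of $\SIGMA^1_1$ conditions, and since $\SIGMA^1_1$ is closed under number quantification this is again $\SIGMA^1_1$; conjoining with the arithmetical part keeps us in $\SIGMA^1_1$. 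For ``$S \leq T$'', which on codes of conditions means $S \subseteq T$: the inclusion $\forall \sigma(\sigma \in S \to \sigma \in T)$ is arithmetical, hence $\SIGMA^1_1$, and conjoining with ``$S, T \in \ILF$'' leaves us in $\SIGMA^1_1$.

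The only real point is ``$T \bot S$'': read literally (``no common refinement in $\ILF$'') it is $\PI^1_1$, so we need a combinatorial reformulation. I would first prove: for $S, T \in \ILF$ with stems $\sigma := \stem(S)$ and $\tau := \stem(T)$, the conditions $S$ and $T$ are compatible if and only if ($\sigma \subseteq \tau$ and $\tau \in S$) or ($\tau \subseteq \sigma$ and $\sigma \in T$), and in that case $S \cap T$ is a common refinement. One direction uses only that $F$ is a proper filter: if, say, $\sigma \subseteq \tau \in S$, then $S \cap T$ is linear up to $\tau$ (a pruned tree is linear below its stem), and above $\tau$ every node $\rho$ has $\Succ_{S \cap T}(\rho) = \Succ_S(\rho) \cap \Succ_T(\rho) \in F \setminus \{\varnothing\}$, so $S \cap T \in \ILF$. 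For the converse, if $R \in \ILF$ refines both then $\stem(R) \in R \subseteq S \cap T$; since $\Succ_R(\stem(R))$ has at least two elements (else the stem of the pruned tree $R$ would be longer), two distinct immediate successors of $\stem(R)$ inside $R \subseteq S$ would be incomparable yet both comparable with $\sigma$, which is impossible unless $\sigma \subseteq \stem(R)$; likewise $\tau \subseteq \stem(R)$, so $\sigma$ and $\tau$ are comparable and the shorter one lies in the tree of the longer one, giving the stated condition.

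Since computing stems, comparing finite sequences, and testing membership of a finite sequence in a coded tree are all arithmetical, this reformulation exhibits ``$T \bot S$'' (on codes of conditions) as an arithmetical, hence $\SIGMA^1_1$, relation, which together with the previous two bounds and the ccc completes the proof that $\ILF$ is Suslin ccc. The expected obstacle is precisely the converse direction of the compatibility lemma — pinning down that a common refinement must have a stem extending both given stems — but, as indicated, this follows cleanly from the fact that the stem is by definition the last non-splitting node of a pruned $F$-Laver tree.
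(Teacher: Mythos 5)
Your proof is correct, and since the paper offers no argument for this Fact (it is stated as ``clear''), your verification --- ccc via $\sigma$-centeredness, and $\SIGMA^1_1$ bounds on ``$T \in \ILF$'', ``$S \leq T$'' and ``$S \ort T$'' obtained from the analyticity of $F$ together with closure of $\SIGMA^1_1$ under recursive substitution and number quantification --- is exactly the routine check the author intends. The one step that genuinely requires an argument, replacing the naively $\PI^1_1$ incompatibility relation by the arithmetical condition ``the stems are comparable and the longer stem belongs to the tree with the shorter stem,'' is correct under the paper's standing conventions on stems (members of $F$ infinite, so conditions split above their stems), and is the right way to see that $\ort$ is $\SIGMA^1_1$.
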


%\medskip For Borel-generated ideals, it is important to check the complexity of the Borel set being in the ideal; for example, if this property is not absolute between models of set theory, then results from \cite{Ik10, KhomskiiThesis} cannot be applied. In general, ideals defined using a clause like in Definition X.X. fail to be $\SIGMA^1_2$ on Borel sets, even if the underlying partial order is simple enough; nevertheless, we are lucky because we can use the topology in the $\IL_F$ case and the dichotomy using $\D_\FF$ in the $\ILFF$-case.
%; therefore, all the consequences of this from Section XX will automatically apply ($\N_\ILFF \neq \I_\ILFF$, $\ILF$ is not topological etc.)

\begin{Lem} Let $F$ be analytic.  Then the ideals $\I_\ILF$ and $\D_\FF$ are $\SIGMA^1_2$ on Borel sets $($i.e., the membership of Borel sets in the ideal is a $\SIGMA^1_2$-property on the Borel codes$)$. \end{Lem}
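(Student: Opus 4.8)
The plan is to unwind both ideals to explicit formulas and count quantifiers, using that $F$ analytic makes ``$T \in \ILF$'' a $\SIGMA^1_1$ predicate on tree-codes (the preceding Fact), and that Borel sets come with Borel codes through which membership/non-membership ``$x \in B$'' is $\PI^1_1/\SIGMA^1_1$ respectively (Shoenfield-style coding).

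\medskip\noindent\textbf{The ideal $\D_\FF$.} By definition, for a Borel set $B$ with code $c$,
\[
B \in \D_\FF \;\Longleftrightarrow\; \exists \varphi \colon \wlw \to F \;\; \forall x \;\bigl( x \in B \to \exists^\infty n\; x(n) \notin \varphi(x\till n)\bigr).
\]
The function $\varphi$ is coded by a single real (a sequence of codes for the values $\varphi(\sigma)$, each value being an element of $F$; ``$\varphi(\sigma) \in F$ for all $\sigma$'' is $\SIGMA^1_1$ since $F$ is analytic — actually it is a countable conjunction of $\SIGMA^1_1$ statements, hence $\SIGMA^1_1$). The matrix ``$x \in B$'' is $\PI^1_1(c)$, and ``$\exists^\infty n\; x(n)\notin \varphi(x\till n)$'' is arithmetic in $x$ and the code of $\varphi$ once we know the codes of $\varphi(\sigma)$ are, say, $\Delta^0_2$-codes — here one must be slightly careful: $x(n) \notin \varphi(x\till n)$ where $\varphi(x\till n)$ is an element of $F\subseteq\PP(\omega)$, so this is a single membership fact about a subset of $\omega$ given by a real parameter, hence arithmetic. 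So the whole matrix is $\PI^1_1$ in $(x, \varphi, c)$, the inner $\forall x$ makes it $\PI^1_1$, and adjoining ``$\varphi$ takes values in $F$'' ($\SIGMA^1_1$) keeps it $\PI^1_1$; finally the outer $\exists\varphi$ gives $\SIGMA^1_2$. I would write this out carefully as the cleaner of the two computations, and I'd phrase the ``$\varphi$ takes values in $F$'' clause so it can be absorbed without damage (it is $\Pi^1_1$ as well? no — it is genuinely $\Sigma^1_1$; but $\exists \varphi (\Sigma^1_1(\varphi) \wedge \Pi^1_1(\varphi,\ldots))$ is still $\Sigma^1_2$, which is all we need).

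\medskip\noindent\textbf{The ideal $\I_\ILF$.} Here I would not argue directly from the definition ``countable union of $\tau_\ILF$-nowhere dense sets'' but instead use the Baire-category / Suslin-ccc machinery. Since $\ILF$ is Suslin ccc and $\I_\ILF$ is the meager ideal of the topology $\tau_\ILF$, a Borel set $B$ is in $\I_\ILF$ iff it contains no $\tau_\ILF$-basic open set modulo $\I_\ILF$, equivalently (by the Baire property of Borel sets in $\tau_\ILF$, Theorem \ref{c}/the Fact on the Baire property) iff for every $T \in \ILF$, $[T] \not\subseteq^* B$, i.e.\ for every $T$ there is $S \leq T$ with $[S]\cap B \in \I_\ILF$. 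That looks circular, so instead I'd use the standard Suslin-ccc fact (as in \cite{BaJu95, Ik10}): for a Suslin ccc forcing $\IP$ whose associated ideal is proper, $B \in \I_\IP$ iff $\mathbf{1}_\IP \Vdash \dot x_G \notin B$, and this forcing statement — by Suslin-ccc absoluteness and the $\SIGMA^1_1$-definability of the forcing relation restricted to analytic/Borel sets — is $\SIGMA^1_2$ in the code of $B$. Concretely: $B\in\I_\ILF \Leftrightarrow$ there is a condition forcing $\dot x_G\in B^c$, no wait — $\Leftrightarrow$ no condition forces $\dot x_G \in B$; since ``$T \Vdash \dot x_G \in B$'' for Borel $B$ is (by ccc) equivalent to ``$[T]\setminus B \in \I_\ILF$'' one again needs the $\Sigma^1_2$ fact, so really the right move is to invoke the general theorem from Ikegami \cite{Ik10} (or Brendle–Löwe) that for a Suslin ccc forcing the associated $\sigma$-ideal restricted to Borel sets is $\SIGMA^1_2$; I'll cite that and give the one-line reason ($B \in \I \Leftrightarrow \exists$ a maximal antichain of conditions each almost-disjoint from $B$, which unwinds to $\exists$ real $\cdots$ with a $\PI^1_1$ matrix using that ``$[S]\cap B=\varnothing$'' is $\PI^1_1$ — closed disjointness — and ccc lets a single real enumerate the countable antichain).

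\medskip\noindent\textbf{Main obstacle.} The $\D_\FF$ side is essentially a direct quantifier count and should be routine. The genuine subtlety is $\I_\ILF$: one must be sure that when $B$ is Borel, ``$[S]\cap B = \varnothing$'' is $\PI^1_1$ (true: $B$ Borel $\Rightarrow$ $B^c$ Borel, and ``$[S]\subseteq B^c$'' is $\PI^1_1$ in the codes of $S$ and $B$, uniformly, even accounting for ``$S \in \ILF$'' being $\Sigma^1_1$ — so the conjunction ``$S\in\ILF \wedge [S]\cap B=\varnothing$'' is $\Sigma^1_1$? no: $\Pi^1_1 \wedge \Sigma^1_1$ is neither, but it is still $\Delta^1_2$, hence inside a $\Sigma^1_2$ block it causes no harm), plus verifying that the ccc really does let us encode the witnessing antichain by a single real so the leading existential quantifier is a real quantifier and not a quantifier over arbitrary countable sets. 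I would make the ccc-encoding step explicit and lean on the cited Suslin-ccc theory for the rest, keeping the $\D_\FF$ computation fully self-contained as the model for both.
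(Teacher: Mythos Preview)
Your approach matches the paper's: for $\D_\FF$ both unwind the definition and count quantifiers, and for $\I_\ILF$ the paper's argument is exactly your ``one-line reason'' --- $B$ is $\tau_\ILF$-nowhere dense iff there is a maximal antichain $A \subseteq \ILF$ with $B \cap \bigcup\{[T] \mid T \in A\} = \varnothing$, and by ccc such an $A$ is coded by a single real (the passage from nowhere dense to meager is left implicit there just as in your sketch). Your alternative of invoking the general Suslin-ccc theory from \cite{Ik10} would also work and is a reasonable shortcut.
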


\begin{proof} A Borel set $B$ is in $\D_\FF$ iff $\exists \varphi: \wlw \to F \; \forall x \; (x \in B \to \exists^\infty n \:(x(n) \notin \varphi(x\till n)))$. This is easily seen to be a $\SIGMA^1_2$ statement if $F$ is $\SIGMA^1_1$.

\p For $\I_\ILF$, let $B$ be a Borel set. Notice that $B$ is $\tau_{\ILF}$-nowhere dense iff there exists a $\tau_{\ILF}$-open dense set $O$ such that $B \cap O = \varnothing$, iff there is a maximal antichain $A \subseteq \ILF$ such that $B \cap \bigcup\{[T] \mid T \in A\} = \varnothing$. By the ccc, one such maximal antichain can be coded by a real. The resulting computation yields a $\SIGMA^1_2$ statement. \end{proof}

In \cite{BrHaLo, Ik10} the concept of \emph{quasi-generic real} was introduced---a real avoiding all Borel sets in a certain $\sigma$-ideal coded in the ground model. This concept coincides with generic reals for ccc ideals, but yields a weaker concept for other (combinatorial) ideals, see e.g. \cite[Section 2.3]{KhomskiiThesis}.

\iffalse It turns out to be very useful when studying regularity properties for projective sets. 

We mention important characterizations of the classical Baire property, Laver-measurability and Hechler-measurability, denoted by $\IC, \IL$ and $\ID$, respectively (recall that $\ID$ and $\IL$ are $\IL_\Cof$ and $\IL_{\Cof^+}$).

\begin{Fact}[Judah-Shelah; Truss; Brendle-L\"owe] $\;$ \begin{enumerate} 

\item The following are equivalent: \begin{itemize}
\item $\DELTA^1_2(\IC)$
\item $\forall r \in \ww \exists x \;( x $ is Cohen over $L[r])$. \end{itemize}

\item The following are equivalent: \begin{itemize}
\item  $\DELTA^1_2(\IL)$
%\item $\SIGMA^1_2(\IL)$
\item $\forall r \in \ww \;\exists x \;( x $ is dominating over $L[r])$. \end{itemize}

\item The following are equivalent: \begin{itemize}
\item $\SIGMA^1_2(\IC)$
\item $\forall r \in \ww \; \{x \mid x $ not Cohen over $L[r]\} \in \mathcal{M}$
\item  $\forall r \in \ww \; \exists x \; \exists d \;(x$ is Cohen over $L[r]$ and $d$ is dominating over $L[r])$
\item $\DELTA^1_2(\IC) \land \DELTA^1_2(\IL)$
\item $\forall r \in \ww \; \exists y \; (y$ is Hechler over $L[r])$
\item $\DELTA^1_2(\ID)$
\end{itemize}
\end{enumerate}
\end{Fact}

\begin{proof}
(see \cite[Theorem 5.8]{BrLo99}). 

\end{proof}

\fi

 In the case of $\ILF$, ``quasi-generic reals'' are the $\ILF$-generic ones, whereas in the case of $\ILFF$, they have a simple characterisation due to the combinatorial ideal $\D_\FF$.

\begin{Lem} \label{quasi} Let $M$ be a model of set theory. A real  $x$ is $\ILF$-generic over $M$ iff $x \notin B$ for every Borel set $B \in \I_\ILF$ with code in $M$. \end{Lem}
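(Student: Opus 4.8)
\textbf{Proof plan for Lemma \ref{quasi}.}

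The plan is to prove the two directions separately, the forward direction being essentially trivial and the reverse direction requiring the ccc together with the Suslin-ccc definability established in the previous Fact. For the direction ($\Rightarrow$): suppose $x$ is $\ILF$-generic over $M$ and $B \in \I_\ILF$ is a Borel set coded in $M$. Since $\I_\ILF$ is exactly the $\sigma$-ideal of $\tau_\ILF$-meager sets and it is $\SIGMA^1_2$ on Borel codes, the statement ``$B \in \I_\ILF$'' holds in $M$ by upward absoluteness (or we simply carry over a witnessing sequence of $\tau_\ILF$-nowhere dense Borel sets with code in $M$); then by genericity $x$ avoids each of the countably many $\tau_\ILF$-nowhere dense pieces — indeed, for each such piece $N$ the set $\{T \in \ILF \mid [T] \cap N = \varnothing\}$ is dense in $\ILF$ and lies in $M$ — so $x \notin B$.

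For the direction ($\Leftarrow$): suppose $x$ avoids every Borel set in $\I_\ILF$ coded in $M$; I want to show $x$ meets every dense set $D \subseteq \ILF$ with $D \in M$. Given such a $D$, work inside $M$ and use the ccc to extract a maximal antichain $A \subseteq D$; since $\ILF$ is Suslin ccc, $A$ can be coded by a single real in $M$, and the set $U := \bigcup\{[T] \mid T \in A\}$ is a $\tau_\ILF$-open dense set whose complement $B := \ww \setminus U$ is therefore a Borel set in $\I_\ILF$ (being $\tau_\ILF$-nowhere dense, in fact closed nowhere dense in the refined topology) with code in $M$. By hypothesis $x \notin B$, i.e.\ $x \in [T]$ for some $T \in A \subseteq D$. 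Since membership in $\ILF$ and the order relation are $\SIGMA^1_1$, and $[T]$ for $T$ with code in $M$ is absolute, this genuinely places $x$ in a condition of $D$, as required. One then assembles these to conclude that $\{T \in \ILF \mid x \in [T]\}$ generates an $M$-generic filter, using that $\ILF$ has a dense set of conditions with stems converging to $x$ (the stems of conditions containing $x$ form a chain cofinal in $x$, by the topology-base lemma).

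The main obstacle — really the only place where more than bookkeeping is needed — is the use of the ccc to replace an arbitrary dense set by a \emph{real-coded} maximal antichain whose union is a Borel open dense set: this is where Suslin-ccc-ness of $\ILF$ enters, guaranteeing that the relevant antichain and its union have Borel codes in $M$ and that all the membership/refinement relations transfer correctly between $M$ and $V$. Everything else (the equivalence of $\I_\ILF$ with the $\tau_\ILF$-meager ideal, the density of the avoidance sets $\{T \mid [T]\cap N = \varnothing\}$ for nowhere dense $N$, and the fact that conditions containing $x$ have stems cofinal in $x$) is already available from the earlier lemmas in Section \ref{Sec2}.
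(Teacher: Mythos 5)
Your plan is correct and is essentially the argument behind the paper's proof, which is given only by citation to \cite[Lemma 2.3.2]{KhomskiiThesis}: for the Suslin ccc forcing $\ILF$, genericity over $M$ and avoidance of $M$-coded Borel sets in $\I_\ILF$ coincide, via ccc maximal antichains coded by reals in $M$ whose open union is $\tau_\ILF$-dense, together with absoluteness transferring the relevant $\SIGMA^1_2$/$\PI^1_2$ facts (membership of a Borel set in $\I_\ILF$, predensity of the antichain) between $M$ and $V$. The only slip worth noting is terminological: carrying ``$B \in \I_\ILF$'' from $V$ into $M$ is a \emph{downward} (Shoenfield-type) use of the $\SIGMA^1_2$ definability, not ``upward absoluteness,'' and it is exactly this (plus the upward transfer of maximality of the antichain) that your appeal to Suslin-ccc-ness must supply.
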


\begin{proof} See \cite[Lemma 2.3.2]{KhomskiiThesis}. \end{proof}

\begin{Def} Let $M$ be a model of set theory. We will call a real $x \in \ww$ \emph{$F$-dominating over $M$} if for every $\varphi: \wlw \to F$ with $\varphi \in M$, $x$ $F$-dominates $\varphi$, i.e., $\forall^\infty n \:(x(n) \in \varphi(x\till n))$ (note that the statement $\varphi: \wlw \to F$ is absolute for between $M$ and larger models). \end{Def}

\begin{Lem} Let $M$ be a model of set theory with $\omega_1 \subseteq M$. A real $x$ is $F$-dominating over $M$ iff $x \notin B$ for every Borel set $B \in \D_\FF$ with code in $M$. \end{Lem}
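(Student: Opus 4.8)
The plan is to prove the two directions separately, mirroring the structure of Lemma \ref{quasi} but exploiting that $\D_\FF$, unlike $\I_\ILF$, is a \emph{Borel-generated combinatorial} $\sigma$-ideal whose generators $[T_{\sigma,\varphi}]$ are exactly the complements of the relevant Borel sets.

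\textbf{($\Rightarrow$).} Suppose $x$ is $F$-dominating over $M$ and let $B \in \D_\FF$ be Borel with code in $M$. Since $\D_\FF$ is a $\sigma$-ideal and being in it is a $\SIGMA^1_2$ property on Borel codes (previous Lemma), and since $\omega_1 \subseteq M$, the statement ``$B \in \D_\FF$'' is witnessed inside $M$: by $\SIGMA^1_2$-absoluteness (Shoenfield) there is $\varphi \colon \wlw \to F$ with $\varphi \in M$ such that $\forall y \in B \; \exists^\infty n \,(y(n) \notin \varphi(y\till n))$. (This is the step where $\omega_1 \subseteq M$ is used, guaranteeing the witness $\varphi$ can be taken in $M$ — one extracts it from an $M$-ordinal enumeration of a witnessing structure, exactly as in the standard $\SIGMA^1_2$-absoluteness arguments.) Since $x$ is $F$-dominating over $M$, $x$ $F$-dominates this $\varphi$, i.e.\ $\forall^\infty n\,(x(n) \in \varphi(x\till n))$, and therefore $x \notin B$.

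\textbf{($\Leftarrow$).} Suppose $x \notin B$ for every Borel $B \in \D_\FF$ with code in $M$, and let $\varphi \colon \wlw \to F$ with $\varphi \in M$ be given; we must show $x$ $F$-dominates $\varphi$. Consider the Borel set $B_\varphi := \{ y \in \ww \mid \exists^\infty n \,(y(n) \notin \varphi(y\till n))\}$, which has a code in $M$. It suffices to observe that $B_\varphi \in \D_\FF$, for then by hypothesis $x \notin B_\varphi$, which is precisely the statement that $x$ $F$-dominates $\varphi$. To see $B_\varphi \in \D_\FF$, note its complement is $\{y \mid \forall^\infty n\,(y(n) \in \varphi(y\till n))\} = \bigcup_{\sigma \in \wlw} [T_{\sigma,\varphi}]$ in the notation of the proof of Lemma \ref{d}; hence $B_\varphi = \ww \setminus \bigcup_\sigma [T_{\sigma,\varphi}]$, and for each $\sigma$ the tree $T_{\sigma,\varphi} \in \ILF$ has $[T_{\sigma,\varphi}] \cap B_\varphi = \varnothing$, so by Lemma \ref{d} (clause 2) $B_\varphi \in \D_\FF$ as witnessed by $\varphi$ itself.

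\textbf{Main obstacle.} The only non-routine point is the first direction: passing from ``$B \in \D_\FF$'' holding in $V$ to a \emph{witness} $\varphi \in M$. This requires both that $\omega_1 \subseteq M$ (so that $M$ computes the Borel code and the relevant trees correctly and contains enough of the well-founded part of any witnessing tree) and an application of $\SIGMA^1_2$-absoluteness between $M$ and $V$; if $F$ is merely $\SIGMA^1_1$ (our standing assumption in this section), ``$B \in \D_\FF$'' is $\SIGMA^1_2$, so Shoenfield applies. The backward direction is entirely soft, reusing the $T_{\sigma,\varphi}$ machinery from Lemma \ref{d}.
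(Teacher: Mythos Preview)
Your proof is correct and follows essentially the same route as the paper, which simply says ``easy to verify from the definition, using $\SIGMA^1_2$-absoluteness between $M$ and $V$ and the fact that $B \in \D_\FF$ is a $\SIGMA^1_2$-statement for Borel sets.'' You have merely spelled out the two directions explicitly, and your identification of the one non-routine step (pulling the witness $\varphi$ down into $M$ via Shoenfield, which is where $\omega_1 \subseteq M$ is used) is exactly right. One small remark: in the $(\Leftarrow)$ direction your detour through Lemma~\ref{d} and the trees $T_{\sigma,\varphi}$ is unnecessary---$B_\varphi \in \D_\FF$ is literally the definition, with $\varphi$ itself as witness.
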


\begin{proof} This is easy to verify from the definition, using $\SIGMA^1_2$-absoluteness between $M$ and $V$ and the fact that $B \in \D_\FF$ is a $\SIGMA^1_2$-statement for Borel sets. \end{proof}

%Notice that, since $\D_\FF \subseteq \I_\ILF$, a $\ILF$-generic real is necessarily $F$-dominating.
 As an immediate corollary of the above and the general framework from \cite{Ik10} and \cite{KhomskiiThesis}, we immediately obtain the following four characterizations for $(\ILF)$- and $(\ILFF)$-measurability. % for analytic filters $F$.

\begin{Cor}  \label{equivalences} Let $F$ be an analytic filter. Then: \begin{enumerate}

\item $\DELTA^1_2(\ILF) \; \Longleftrightarrow \; \forall r \in \ww \; \exists x  \:(x$ is $\ILF$-generic over $L[r])$.

\item $\SIGMA^1_2(\ILF) \; \Longleftrightarrow \; \forall r \in \ww \; \{x \mid x $ not $\ILF$-generic over $L[r]\} \in \I_\ILF$.

\item $\DELTA^1_2(\ILFF) \Longleftrightarrow \forall r \in \ww \; \forall T \in \ILFF \: \exists x \in [T] \:(x$ is $F$-dominating  over $L[r])$.

\item $\SIGMA^1_2(\ILFF) \; \Longleftrightarrow \; \forall r \in \ww \; \{x \mid x $ not $F$-dominating  over $L[r]\} \in \I_\ILFF$.
\end{enumerate}
\end{Cor}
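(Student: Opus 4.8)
The plan is to derive all four equivalences from the abstract characterization theorems in \cite{Ik10} and \cite{KhomskiiThesis}, by verifying that the forcings $\ILF$ and $\ILFF$ (together with their ideals $\I_\ILF$ and $\I_\ILFF$, and in the case of $\ILFF$ the simpler Borel-generated ideal $\D_\FF$) satisfy the hypotheses of those general frameworks. The two main ingredients needed are: (i) the forcing is \emph{Suslin} with a suitably definable associated ideal, and (ii) ``quasi-generic reals'' in the sense of \cite{BrHaLo, Ik10} coincide with a concrete, absolutely-definable notion of genericity.

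\begin{proof} We verify the hypotheses of the general characterization theorems of Ikegami \cite[Theorem 4.5, Theorem 4.11]{Ik10} (see also \cite[Theorems 2.3.1 and 2.3.3]{KhomskiiThesis}).

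\p For (1) and (2): By the Fact above, $\ILF$ is a Suslin ccc forcing (when $F$ is analytic), and by the Lemma above $\I_\ILF$ is a $\SIGMA^1_2$-ideal on Borel sets. Moreover, $\ILF \embeds_d \B(\ww)/\I_\ILF$. By Lemma \ref{quasi}, a real $x$ is $\ILF$-generic over a model $M$ precisely when $x$ avoids all Borel sets in $\I_\ILF$ coded in $M$; that is, the $\ILF$-generic reals are exactly the ``quasi-generic'' reals for $\I_\ILF$. Hence the general equivalences of \cite{Ik10, KhomskiiThesis} for Suslin ccc forcings apply verbatim, yielding
$$\DELTA^1_2(\ILF) \Longleftrightarrow \forall r \in \ww \; \exists x \; (x \text{ is } \ILF\text{-generic over } L[r])$$
and
$$\SIGMA^1_2(\ILF) \Longleftrightarrow \forall r \in \ww \; \{x \mid x \text{ not } \ILF\text{-generic over } L[r]\} \in \I_\ILF.$$

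\p For (3) and (4): Here we use the dense embedding $\ILFF \embeds_d \B(\ww)/\D_\FF$ coming from Theorem \ref{dichotomy}, where $\D_\FF$ is a Borel-generated $\sigma$-ideal which, by the Lemma above, is $\SIGMA^1_2$ on Borel sets. Theorem \ref{dichotomy} is precisely the ``Borel-ideal dichotomy'' (in the terminology of \cite{KhomskiiThesis}) needed to run the general argument for non-ccc arboreal forcings. The replacement for ``quasi-genericity'' is $F$-dominating-ness over $M$: by the Lemma above, for $M$ with $\omega_1 \subseteq M$, a real $x$ is $F$-dominating over $M$ iff $x$ avoids all Borel sets in $\D_\FF$ coded in $M$. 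Feeding this into the general framework (and using that for $\DELTA^1_2$ one quantifies existentially over branches of an arbitrary condition $T \in \ILFF$, since $\ILFF$-measurability of a $\DELTA^1_2$ set $A$ means every $T$ has a subtree $S$ with $[S]$ inside or outside $A$ modulo $\D_\FF$, which by the dichotomy can be taken to be an $\ILFF$-subtree with $[S] \subseteq A$ or $[S] \cap A = \varnothing$ outright) gives
$$\DELTA^1_2(\ILFF) \Longleftrightarrow \forall r \in \ww \; \forall T \in \ILFF \; \exists x \in [T] \; (x \text{ is } F\text{-dominating over } L[r])$$
and
$$\SIGMA^1_2(\ILFF) \Longleftrightarrow \forall r \in \ww \; \{x \mid x \text{ not } F\text{-dominating over } L[r]\} \in \I_\ILFF.$$
Note that in (4) one may equally write $\D_\FF$ in place of $\I_\ILFF$ on the right-hand side, since the two ideals agree on Borel sets by Theorem \ref{dichotomy} and the relevant set is the complement of a $\SIGMA^1_2$, hence $\PI^1_2$, set which is a union of $\aleph_1$ Borel sets in the ideal.
\end{proof}

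\p The only genuinely new content beyond citing \cite{Ik10, KhomskiiThesis} is the bookkeeping that the hypotheses are met; the main obstacle is making sure that the non-ccc case (3)--(4) really fits the general template, which is why Theorem \ref{dichotomy} and the Borel-generatedness of $\D_\FF$ are essential — without them $\I_\ILFF$ is not Borel-generated and the abstract machinery would not apply. Since all of this is routine given the earlier sections, we have only sketched the verification and refer to \cite[Section 2.3]{KhomskiiThesis} for the details of the general framework.
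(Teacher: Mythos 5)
Your overall route is the same as the paper's: cite the abstract characterization theorems of Ikegami/Khomskii, and verify the hypotheses (analytic $F$ gives a Suslin, absolutely definable forcing; $\I_\ILF$ and $\D_\FF$ are $\SIGMA^1_2$ on Borel codes; $\ILF$-genericity and $F$-dominatingness are exactly quasi-genericity for the respective ideals). That part is fine. But there is a genuine gap in item (1): the abstract framework does \emph{not} yield statement (1) ``verbatim''. What it yields is the local statement $\DELTA^1_2(\ILF) \Leftrightarrow \forall r \; \forall T \in \ILF \; \exists x \in [T]\, (x$ is $\ILF$-generic over $L[r])$, exactly parallel to your item (3). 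To pass from ``$\exists x$ generic over $L[r]$'' to ``generics exist inside every $[T]$'' one needs a homogeneity argument for $\ILF$, and this is precisely the one non-trivial point the paper's proof addresses: for every non-principal filter $F$ and every $X \in F$ there is a bijection $\pi : \omega \to X$ with $a \in F \Leftrightarrow \pi^{-1}[a] \in F$, and from this one builds an isomorphism-type argument showing that an $\ILF$-generic real over $L[r]$ can be transferred below any condition $T$. Without some such argument the right-hand side of (1) as stated is strictly weaker on its face than what the abstract theorem gives (for a non-homogeneous forcing, one generic real need not produce generics in every condition), so your claim that the ccc case applies ``verbatim'' skips the essential step.

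A secondary point: your closing remark in (4), that one may replace $\I_\ILFF$ by $\D_\FF$ on the right-hand side ``since the relevant set is a union of $\aleph_1$ Borel sets in the ideal'', is unjustified — $\D_\FF$ is only a $\sigma$-ideal, and the two ideals agreeing on Borel sets says nothing about this non-Borel union. Indeed, whether $\SIGMA^1_2(\ILFF)$ admits such a $\D_\FF$-characterization is essentially the open question raised at the end of the paper, so you should not assert it in passing. Since this remark is not needed for the statement being proved, simply delete it; but the homogeneity argument for (1) must be supplied.
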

	
\begin{proof} See \cite[Theorem 4.3 and Theorem 4.4]{Ik10} and \cite[Theorem 2.3.7 and Corollary 2.3.8]{KhomskiiThesis}. Note that both ideals $\ILF$ and $\D_\FF$ are $\SIGMA^1_2$, the forcings have absolute definitions and are proper, so the above results can be applied.

\p Only one non-trivial fact requires some explanation. In point 1, the above abstract theorems only yield the statement ``$\DELTA^1_2(\ILF) \; \Longleftrightarrow \; \forall r \in \ww \; \forall T \in \ILF \; \exists x \in [T] \:(x$ is $\ILF$-generic over $L[r])$''. In order to eliminate the clause ``$\forall T \in \ILF$ \dots '', we use the following fact: for every non-principal filter $F$ and every $X \in F$, there exists a bijection $\pi: \omega \to X$, such that for all $a \subseteq X$, $a \in F \; \Leftrightarrow \; \pi^{-1}[a] \in F$. See, e.g., \cite[Lemma 3]{MediniZdomskyy}. We leave it to the reader to verify that this implies homogeneity of $\ILF$, in the sense that if there exists an $\ILF$-generic real then there also exists an $\ILF$-generic real inside $T$ for every $T \in \ILF$. \end{proof}

We are interested in more elegant characterizations of the four above statements.

%. 
% which  which would make it easier to determine in which models of set theory they are true, and which wouldWe proceed to find simpler characterisations of the above statements. %, beginning with $\ILF$. 

\begin{Thm} \label{thistheo} $\SIGMA^1_2(\ILF) \; \Longleftrightarrow \; \forall r \in \ww \:(\omega_1^{L[r]} < \omega_1)$. \end{Thm}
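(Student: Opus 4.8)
The plan is to prove this via the general theory already assembled: by Corollary \ref{equivalences}(2), $\SIGMA^1_2(\ILF)$ is equivalent to the statement that for every $r \in \ww$, the set $N_r := \{x \mid x$ not $\ILF$-generic over $L[r]\}$ belongs to $\I_\ILF$. So it suffices to show that, assuming $F$ is an analytic filter, $N_r \in \I_\ILF$ for all $r$ if and only if $\omega_1^{L[r]} < \omega_1$ for all $r$. We prove the two directions separately.

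For the direction $(\Leftarrow)$, assume $\omega_1^{L[r]} < \omega_1$. We want $N_r \in \I_\ILF$, i.e., $N_r$ is $\tau_\ILF$-meager. The point is that there are only $\omega_1^{L[r]}$-many Borel codes in $L[r]$ for $\tau_\ILF$-meager sets (recall from the Lemma preceding this theorem that membership of a Borel set in $\I_\ILF$ is $\SIGMA^1_2$, hence absolute between $L[r]$ and $V$, and that $\I_\ILF$ is Borel-generated since it is the $\tau_\ILF$-meager ideal). By Lemma \ref{quasi}, $x \notin N_r$ exactly when $x$ avoids every Borel set in $\I_\ILF$ coded in $L[r]$; therefore $N_r$ is contained in the union of all Borel $\tau_\ILF$-meager sets with codes in $L[r]$. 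This is a union of fewer than $\omega_1$ many $\tau_\ILF$-meager sets, and since $\tau_\ILF$ is a topology satisfying the Baire category theorem (the Lemma following Definition \ref{a}), such a union is still $\tau_\ILF$-meager — here one uses that $\I_\ILF$ is a genuine $\sigma$-ideal and that a wellordered union of length $<\omega_1$ of meager sets is meager, which in this context reduces to the $\sigma$-ideal property once we note $\omega_1^{L[r]}$ is countable in $V$. Hence $N_r \in \I_\ILF$.

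For the direction $(\Rightarrow)$, assume there is $r$ with $\omega_1^{L[r]} = \omega_1$. We must show $N_r \notin \I_\ILF$, i.e., $N_r$ is not $\tau_\ILF$-meager. Equivalently, we must show that the set of $\ILF$-generic reals over $L[r]$ is $\tau_\ILF$-meager (since its complement is $N_r$, and a set and its complement cannot both be $\tau_\ILF$-meager by the Baire category theorem). The standard argument: working in $L[r]$, since $\omega_1^{L[r]} = \omega_1$ one can construct — by a wellordering of the Borel codes in $L[r]$ of length $\omega_1$ — an increasing $\omega_1$-sequence of $\tau_\ILF$-dense-open sets whose intersection, relativized appropriately, captures exactly the non-generic reals, or rather one shows the generic reals form a set which is the complement of a union of $\omega_1$-many Borel sets each of which is $\tau_\ILF$-comeager from the point of view of $L[r]$. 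In our setting the cleanest route is to invoke the abstract result behind Corollary \ref{equivalences}: the framework of \cite{Ik10, KhomskiiThesis} for Suslin ccc forcing with a $\SIGMA^1_2$ Borel-generated ideal gives precisely that $\SIGMA^1_2(\IP) \Leftrightarrow \forall r\, (\omega_1^{L[r]} < \omega_1)$ whenever the forcing $\IP$ is additionally \emph{homogeneous} in the sense made explicit in the proof of Corollary \ref{equivalences}, and the $(\Rightarrow)$ half of that equivalence is exactly the statement that if $\omega_1^{L[r]} = \omega_1$ then the non-generics are non-small — this is Judah–Shelah-style and uses that a $\SIGMA^1_2$ set is $\omega_1$-Suslin, hence a union of $\omega_1$ Borel sets, combined with the Baire category theorem for $\tau_\ILF$.

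The main obstacle I expect is the $(\Rightarrow)$ direction, specifically verifying that the abstract machinery of \cite{Ik10} applies cleanly: one needs that $\ILF$ is Suslin ccc (this is the Fact above), that $\I_\ILF$ is $\SIGMA^1_2$ on Borel sets and Borel-generated (established), and — the subtle point flagged in the proof of Corollary \ref{equivalences} — the homogeneity of $\ILF$, namely that an $\ILF$-generic real over $L[r]$ exists iff one exists inside $[T]$ for every $T \in \ILF$, which follows from the fact that for non-principal $F$ and $X \in F$ there is a bijection $\pi: \omega \to X$ with $a \in F \Leftrightarrow \pi^{-1}[a] \in F$. Once homogeneity is in hand, the equivalence $\SIGMA^1_2(\ILF) \Leftrightarrow \forall r\,(\omega_1^{L[r]} < \omega_1)$ is an instance of the general Ikegami–Brendle–Löwe characterization for Suslin ccc forcings, and the theorem follows. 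I would present the proof as: reduce to Corollary \ref{equivalences}(2) via homogeneity, then cite the general ccc characterization of \cite[Theorem 4.3]{Ik10} or \cite[Theorem 2.3.7]{KhomskiiThesis} for the forcing $\ILF$ with ideal $\I_\ILF$, noting all hypotheses have been checked above.
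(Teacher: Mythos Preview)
Your $(\Leftarrow)$ direction is fine and matches the paper: if $\omega_1^{L[r]}$ is countable then the set of non-generics is a countable union of $\tau_{\ILF}$-meager Borel sets, hence meager.

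Your $(\Rightarrow)$ direction, however, has a genuine gap. You assert that ``the equivalence $\SIGMA^1_2(\ILF) \Leftrightarrow \forall r\,(\omega_1^{L[r]} < \omega_1)$ is an instance of the general Ikegami--Brendle--L\"owe characterization for Suslin ccc forcings''. No such general theorem exists. The abstract framework (\cite[Theorem 4.3]{Ik10}, \cite[Theorem 2.3.7]{KhomskiiThesis}) only yields Corollary~\ref{equivalences}(2), namely $\SIGMA^1_2(\ILF) \Leftrightarrow \forall r\,(N_r \in \I_\ILF)$; it says nothing about when $N_r \in \I_\ILF$ fails. Indeed, Cohen forcing $\IC$ is Suslin ccc with a $\SIGMA^1_2$ Borel-generated ideal (the meager ideal) and is homogeneous, yet $\SIGMA^1_2(\IC)$ is strictly weaker than $\forall r\,(\omega_1^{L[r]} < \omega_1)$: it holds in the Hechler extension of $L$, where $\omega_1^L = \omega_1$. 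So the hypotheses you list cannot suffice.

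What the paper actually does for $(\Rightarrow)$ is a specific combinatorial argument, adapted from \cite{LabedzkiRepicky} and \cite[Theorem 5.11]{BrLo99}. One introduces a rank function $\rk_D$ on $\wlw$ for each open dense $D \subseteq \ILF$ (Definition~\ref{rankdef}), and proves Lemma~\ref{Lefty}: for any $(F^-)$-mad family $\mathcal{A}$, the $\tau_{\ILF}$-nowhere-dense sets $X_a := \{x \mid \ran(x) \cap a = \varnothing\}$ for $a \in \mathcal{A}$ have the property that no single $X \in \I_\ILF$ can contain uncountably many of them. Since an analytic filter admits an $(F^-)$-mad family of size continuum, one can arrange $\omega_1$-many $a$'s in $L[r]$ when $\omega_1^{L[r]} = \omega_1$; the corresponding $X_a$ are then $\omega_1$-many Borel $\I_\ILF$-small sets coded in $L[r]$ whose union is not in $\I_\ILF$, so $N_r \notin \I_\ILF$. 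This is exactly the content missing from your sketch --- in effect, a proof that $\add(\I_\ILF) = \omega_1$ realised by sets coded in $L[r]$.
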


The proof uses methods similar to \cite[Theorem 6.2]{LabedzkiRepicky} (see also \cite[Theorem 5.11]{BrLo99}). It follows using a series of definitions and lemmas. 

\newcommand{\rk}{{\rm rk}}
\begin{Def} \label{rankdef} For every open dense set $D \subseteq \ILF$, define a \emph{rank function} $\rk_D: \wlw \to \omega_1$ by \begin{itemize}
\item $\rk_D(\sigma) := 0$ iff there is $T \in D$ with $\stem(T) = \sigma$ and
\item $\rk_D(\sigma) := \alpha$ iff $\rk_D(\sigma) \not<\alpha$ and $\exists Z \in F^+ \; \forall n \in Z \:( \rk_D(\sigma \cc \left<n\right>) < \alpha)$. \end{itemize} \end{Def}

\noindent A standard argument shows that $\rk_D(\sigma)$ is well-defined for every $\sigma$.

\begin{Def} An $(F^-)$-mad family is a collection $\mathcal{A} \subseteq F^+$ such that $\forall a \neq b \in \mathcal{A}$ $(a \cap b) \in F^-$, and $\forall a \in F^+$ there exists $b \in \mathcal{A}$ such that $(a \cap b) \in F^+$. \end{Def}

\begin{Fact} For every analytic filter $F$, there exists an $(F^-)$-mad family of size $2^{\aleph_0}$. \end{Fact}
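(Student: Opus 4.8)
\textbf{Plan for the proof that every analytic filter $F$ admits an $(F^-)$-mad family of size $2^{\aleph_0}$.}

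The strategy I would follow is to reduce the problem to the classical construction of an almost disjoint family of size continuum, by exploiting the analyticity of $F$ to secure a ``copy'' of the combinatorial skeleton on which that construction works. Concretely, since $F$ is a non-principal analytic filter, $\Cof \le_K F$; moreover one can find an infinite $X \in F^+$ (in fact any $X \in F^+$ works) and a bijection $\pi : \omega \to X$ such that $a \in F \Leftrightarrow \pi^{-1}[a] \in F$ when $a \subseteq X$ — this is the homogeneity fact cited in the proof of Corollary \ref{equivalences} (from \cite{MediniZdomskyy}). The point is that $(F^-)\!\restriction\! X$ is isomorphic to $F^-$, so it is enough to produce a large family of sets that are pairwise $F^-$-small and jointly $F^-$-maximal; building them inside a single $X \in F^+$ costs nothing. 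First I would take a classical almost disjoint family $\{a_\xi \mid \xi < 2^{\aleph_0}\}$ on $\omega$ of size continuum, say realized via branches of $2^{<\omega}$. These are pairwise finite-intersection, hence pairwise in $\Fin \subseteq F^-$, and they are all infinite, hence in $F^+$ (as $F$ is non-principal, so $\Fin \subseteq F^-$, i.e. all cofinite sets — equivalently all sets with infinite complement that are ``big enough'' — wait, more carefully: an infinite set need not be $F^+$).

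So the first genuine step is to arrange that the pieces of the family are $F^+$, not merely infinite. Here is where analyticity enters essentially: an analytic filter is \emph{not} a maximal filter (ultrafilters are not analytic, by a theorem of Sierpiński), so $F^+$ is ``large'' in a strong sense. I would argue that one can thin the almost disjoint family so that each $a_\xi \in F^+$: start from a set $Y \in F^+$ with $\omega \setminus Y \in F^+$ (non-ultrafilter), and build the almost disjoint family of branches inside a tree whose nodes are indexed so that each branch meets $Y$ in an $F^+$-set — or, cleaner, use the $\Cof \le_K F$ witness $\rho: \omega \to \omega$ and pull back a classical a.d.\ family: if $\{b_\xi\}$ is almost disjoint on $\omega$ then $\{\rho^{-1}[b_\xi]\}$ are pairwise in $F^-$ (since $b_\xi \cap b_\eta$ finite $\Rightarrow \rho^{-1}[b_\xi \cap b_\eta] \in F^-$) and each $\rho^{-1}[b_\xi] \in F^+$ (since $b_\xi$ infinite and $\rho$ is a $K$-reduction witness, $b_\xi \in \Cof^+ \Rightarrow \rho^{-1}[b_\xi] \in F^+$). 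This pull-back trick gives continuum-many pairwise $F^-$-small $F^+$-sets in one stroke, which is the heart of the matter.

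It then remains to make the family \emph{maximal}, i.e.\ an $(F^-)$-mad family, not just an $(F^-)$-a.d.\ family. For this I would invoke a Zorn's lemma argument: the pull-back family $\{\rho^{-1}[b_\xi] \mid \xi < 2^{\aleph_0}\}$ has size $2^{\aleph_0}$, and any maximal $(F^-)$-a.d.\ family extending it is still of size at least $2^{\aleph_0}$ (extending can only add members) and at most $2^{\aleph_0}$ (it is a family of subsets of $\omega$), hence exactly $2^{\aleph_0}$. One must check that such a maximal extension exists — a union of a chain of $(F^-)$-a.d.\ families is again $(F^-)$-a.d.\ because the defining conditions ($\forall a \ne b\ (a \cap b \in F^-)$) are preserved under unions of chains — and that maximality in the poset of $(F^-)$-a.d.\ families coincides with the $(F^-)$-mad condition ($\forall a \in F^+\ \exists b\ (a \cap b \in F^+)$), which is immediate from the definition.

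\textbf{Main obstacle.} The delicate point is the very first reduction: guaranteeing that the continuum-many pairwise-small pieces are genuinely $F^+$. For an arbitrary filter this can fail (e.g.\ a maximal almost disjoint family refining into a single $F$-positive set), so one truly needs a structural feature of analytic filters. The cleanest route is the $\Cof \le_K F$ pull-back described above, which simultaneously forces $F^+$-positivity of each piece and $F^-$-smallness of pairwise intersections; verifying that a $K$-reduction witness $\rho$ has the property $b \in \Cof^+ \Rightarrow \rho^{-1}[b] \in F^+$ is exactly the remark recorded right after the definition of Katetov reducibility, so no new work is needed there. The size count and the Zorn argument are then routine.
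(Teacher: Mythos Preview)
Your proposal has a genuine gap at exactly the point you flag as ``the delicate point''. You claim that if $\rho$ witnesses $\Cof \le_K F$ then $b \in \Cof^+ \Rightarrow \rho^{-1}[b] \in F^+$, and you say this is ``exactly the remark recorded right after the definition of Katetov reducibility''. But that remark says the \emph{opposite} direction: for $\pi$ witnessing $G \le_K F$ one has $a \in F^+ \Rightarrow \pi[a] \in G^+$ (forward image, not preimage). Your direction is simply false: take $\rho = \mathrm{id}$, which witnesses $\Cof \le_K F$ for any non-principal $F$; then $\rho^{-1}[b] = b$, and an infinite $b$ need not lie in $F^+$. So your pull-back family $\{\rho^{-1}[b_\xi]\}$ need not consist of $F^+$-sets, and the argument collapses. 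Note also that $\Cof \le_K F$ holds for \emph{every} non-principal filter, so nothing in your argument actually uses analyticity.

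This is precisely where analyticity must do real work. One standard route: by Talagrand's theorem an analytic filter has the Baire property, hence is meager, hence there is an interval partition $(I_n)_{n<\omega}$ of $\omega$ such that every $a \in F$ meets all but finitely many $I_n$; equivalently, any set containing infinitely many blocks $I_n$ lies in $F^+$. Now take a classical a.d.\ family $\{b_\xi \mid \xi < 2^{\aleph_0}\}$ on $\omega$ and set $a_\xi := \bigcup_{n \in b_\xi} I_n$. Each $a_\xi$ contains infinitely many blocks, so $a_\xi \in F^+$; and $a_\xi \cap a_\eta$ is finite, hence in $F^-$. Your Zorn argument then extends this to an $(F^-)$-mad family of size $2^{\aleph_0}$ without change. (If you prefer the pull-back language: the map $\rho(k) = n$ for $k \in I_n$ is a \emph{finite-to-one} Katetov witness, and finite-to-one is exactly the extra feature that makes preimages of infinite sets $F^+$-positive --- but obtaining such a $\rho$ is a consequence of Talagrand, not of $\Cof \le_K F$ alone.) The paper itself gives no argument here; it simply cites an external reference.
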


\begin{proof} See \cite[Corollary 1.8]{KhomskiiBarnabas}. \end{proof}

\begin{Lem} \label{Lefty} Let $\mathcal{A}$ be an $(F^-)$-mad family. For each $a \in \mathcal{A}$, let $X_a := \{x \in \ww \mid \ran(x) \cap a = \varnothing\} \in \N_\ILF$. Then, for any $X \in \I_\ILF$, the collection $\{a \in \mathcal{A} \mid X_a \subseteq X\}$ is at most countable. \end{Lem}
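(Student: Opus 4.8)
The plan is to argue by contradiction, exploiting the almost-disjointness of the mad family together with the fact that $X \in \I_\ILF$ means $X$ is $\tau_\ILF$-meager, hence contained in a countable union of $\tau_\ILF$-nowhere dense sets, each of which "misses" a dense open set in $\ILF$. Suppose toward a contradiction that $\{a \in \mathcal{A} \mid X_a \subseteq X\}$ is uncountable; fix an uncountable $\mathcal{A}' \subseteq \mathcal{A}$ with $X_a \subseteq X$ for all $a \in \mathcal{A}'$. Write $X \subseteq \bigcup_n H_n$ with each $H_n$ $\tau_\ILF$-nowhere dense, so that for each $n$ there is a dense open $D_n \subseteq \ILF$ with $[T] \cap H_n = \varnothing$ for all $T \in D_n$; equivalently, by the Baire category argument in the first lemma, any $x$ lying in $[T]$ for a suitable fusion of conditions drawn from the $D_n$ avoids $\bigcup_n H_n \supseteq X$.

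First I would produce, for a single $a \in \mathcal{A}'$, an explicit branch $x$ with $\ran(x) \cap a = \varnothing$ (so $x \in X_a \subseteq X$) that simultaneously is $\ILF$-generic enough to avoid $X$ — the point being that the tree $T_a := \{\sigma \in \wlw \mid \ran(\sigma) \cap a = \varnothing\}$ (or rather, its subtree of nodes whose successor sets are the sets $\omega \setminus a$ intersected with an $F$-set, which lies in $\ILF$ precisely because $a \in F^-$, i.e. $\omega \setminus a \in F$) is an honest $\ILF$-condition, and $X_a = [T_a]$. So $[T_a] = X_a \subseteq X \in \I_\ILF$, contradicting the Baire category theorem (first lemma), which says $[T_a] \notin \I_\ILF$.

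Wait — that already finishes it for a single $a$, but in fact it shows something too strong, so the real content must be that $X_a$ is \emph{not} of the form $[T]$ but is genuinely smaller (it is $\tau_\ILF$-nowhere dense, as stated: $X_a \in \N_\ILF$). So the correct approach is: for finitely many distinct $a_0, \dots, a_{k} \in \mathcal{A}'$, the intersection $X_{a_0} \cap \dots \cap X_{a_k}$ is the set of branches avoiding $a_0 \cup \dots \cup a_k$; since the $a_i$ are pairwise in $F^-$ this union is still in $F^-$ (finite unions of $F^-$-sets are $F^-$-sets), so its complement is in $F$, and hence $\{\sigma \mid \ran(\sigma) \cap (a_0 \cup \dots \cup a_k) = \varnothing\}$ carries an $\ILF$-tree $T$ with $[T] = X_{a_0} \cap \dots \cap X_{a_k}$, giving $[T] \subseteq X \in \I_\ILF$ — again contradicting Baire category. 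Thus already \emph{two} distinct $a$'s with $X_a \subseteq X$ is impossible... unless $X_a \subseteq X$ can only hold when $X_a$ alone is not all of some $[T]$. The resolution I expect the author intends: use countably many $a_n \in \mathcal{A}'$; then $\bigcup_n a_n$ need no longer be in $F^-$, but by $(F^-)$-madness, for the "diagonal" set $Y$ built from a generic-enough real one shows $\ran(x)$ meets some $a \in \mathcal{A}$ on an $F^+$-set, and a counting/pigeonhole over the countably many $H_n$ pins down which $a$, forcing $X_a \not\subseteq X$.

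The main obstacle, and the step I would spend the most care on, is making precise the diagonalization that turns an uncountable family of $a$'s with $X_a \subseteq X$ into a single object contradicting either the Baire category theorem for $\tau_\ILF$ or the $(F^-)$-maximality of $\mathcal{A}$: concretely, building a branch $x$ that avoids $X = \bigcup_n H_n$ by a fusion through the dense open sets $D_n$, while arranging $\ran(x)$ to be $F^+$-positive in its intersection with \emph{no} $a \in \mathcal{A}'$ — whence, since every $X_a \subseteq X$ and $x \notin X$, we get $x \notin X_a$, i.e. $\ran(x) \cap a \neq \varnothing$ for all $a \in \mathcal{A}'$, which is fine, so the contradiction must instead come from choosing the fusion so that $\ran(x)$ itself lies in $F^+$ and then applying madness to get $b \in \mathcal{A}$ with $\ran(x) \cap b \in F^+$; uncountably many $a \in \mathcal{A}'$ forces, by a rank/pigeonhole argument on $\omega_1$, some uniformity making $X_b \subseteq X$ compatible with $x \in X_b$, contradicting $x \notin X$. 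I would organize this as: (i) fix the $H_n$ and $D_n$; (ii) for each $a \in \mathcal{A}'$ extract an ordinal invariant (e.g. $\sup_n \rk_{D_n}(\stem)$ relativized to $T_a$) living in $\omega_1$; (iii) since $\mathcal{A}'$ is uncountable, two members $a \neq b$ share the invariant, and interleave their trees using almost-disjointness to build the offending condition; (iv) derive the contradiction with Baire category. The delicate point is (iii): the interleaving must stay inside $\ILF$, which is exactly where $a \cap b \in F^-$ is used.
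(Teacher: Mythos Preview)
Your proposal never reaches a working argument, and the attempted approaches contain a concrete error. You write that ``since the $a_i$ are pairwise in $F^-$ this union is still in $F^-$'', but the $a_i$ are elements of $\mathcal{A} \subseteq F^+$; it is only their pairwise \emph{intersections} that lie in $F^-$. Consequently $\omega \setminus (a_0 \cup \dots \cup a_k)$ need not be in $F$ (it could even be empty), so $X_{a_0} \cap \dots \cap X_{a_k}$ does \emph{not} contain $[T]$ for any $T \in \ILF$, and no contradiction with Baire category arises from finitely many $a$'s. Your final plan---an ordinal-pigeonhole on pairs $a \neq b$ in $\mathcal{A}'$ followed by ``interleaving their trees''---does not get around this: almost-disjointness of $a$ and $b$ gives you nothing about $\omega \setminus (a \cup b)$, so there is no $\ILF$-condition to interleave into.

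The missing idea is that one should not play two elements of $\mathcal{A}'$ against each other at all. The paper fixes a countable elementary submodel $N \prec \mathcal{H}_\theta$ containing $\mathcal{A}$, the $D_n$, and the $\Sigma^1_1$-parameter for $F$, and then shows that for \emph{every} $a \in \mathcal{A} \setminus N$ one can build $x \in X_a \setminus X$. This is done by a rank-descent through each $D_n$: at a node $\tau$ with $\rk_{D_n}(\tau) = \alpha > 0$, the witnessing $Z \in F^+$ can be taken in $N$ by elementarity; then madness of $\mathcal{A}$ \emph{inside $N$} produces $b \in \mathcal{A} \cap N$ with $Z \cap b \in F^+$. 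Since $a \notin N$ we have $b \neq a$, hence $a \cap b \in F^-$, and so $Z \cap b \not\subseteq a$, yielding a successor in $Z \setminus a$ of strictly smaller rank. Iterating gives a stem-increasing sequence of conditions in the $D_n$ whose stems all avoid $a$. Thus the role of madness is used against a \emph{single external} $a$, with the whole family $\mathcal{A} \cap N$ supplying the disjointness; your plan tried to use only two $a$'s from $\mathcal{A}'$ and never invoked the remaining members of $\mathcal{A}$.
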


\begin{proof}   Let $X \subseteq \bigcup_n X_n$ where $X_n$ are closed nowhere dense in $\tau_\ILF$, and let $D_n := \{T \mid [T] \cap X_n = \varnothing\}$. Then the $D_n$ are open dense in $\ILF$. Consider a countable elementary submodel $N$ of some sufficiently large $\mathcal{H}_\theta$ containing $\mathcal{A}$, the $D_n$, and the defining parameter of $F$ (i.e., the $r \in \ww$ such that $F \in \Sigma^1_1(r)$). The proof will be completed by showing that if $a \in \mathcal{A} \setminus N$, then there exists $x \in X_a \cap \bigcap_n \bigcup \{[T] \mid T \in D_n\}$, hence $x \in X_a \setminus X$.

\medskip
 \s{Sublemma.} For every $D_n$, every  $a \in \mathcal{A} \setminus N$, and every $T \in \ILF$, if  $\ran(\stem(T)) \cap a = \varnothing$ then there exists $S \leq T$ with $S \in D_n$ and such that $\ran(\stem(S)) \cap a = \varnothing$ as well.

\begin{proof} Let $Y := \{\tau \in T \mid \stem(T) \subseteq \tau$ and $\ran(\tau) \cap a = \varnothing\}$. Let $\tau \in Y$ be of least $D_n$-rank. We claim that $\rk_{D_n}(\tau) = 0$, which completes the proof. Towards contradiction, assume $\rk_{D_n}(\tau) = \alpha > 0$ and let $Z \in F^+$ witness this. By elementarity and using the fact that all relevant objects are in $N$ and $F$ is absolute for $N$ as well, it follows that $Z \in N$. 

\p By elementarity and absoluteness of $F$, $N \models $ ``$\mathcal{A}$ is an $(F^-)$-mad family'', hence there exists $b \in \mathcal{A} \cap N$ such that $Z \cap b \in F^+$. Since $b \neq a$, it follows that $b \cap a \in F^-$, so there exists $n \in (Z \setminus a)$. Then $\tau \cc \left<n\right>$ is an element of $Y$ with $D_n$-rank less than $\alpha$, contradicting the minimality of $\tau$.  \renewcommand{\qed}{\hfill $\Box$ (Sublemma)} \end{proof}

\p Now, it is clear that we can inductively apply the sublemma to find a sequence $T_0 \geq T_1 \geq T_2 \geq \dots$, with strictly increasing stems, such that $T_n \in D_n$ for every $n$, and moreover $\ran(\stem(T_n)) \cap a = \varnothing$ for every $n$. Then $x := \bigcup_n \stem(T_n)$ has all the required properties, i.e., $x \in X_a \setminus X$.  \end{proof}

\begin{proof}[Proof of Theorem \ref{thistheo}] We need to prove the equivalence between \begin{enumerate}
\item $ \forall r \;  \{x \mid x $ not $\ILF$-generic over $L[r]\} \in \I_\ILF$ and 
\item $ \forall r \;(\omega_1^{L[r]} < \omega_1)$. \end{enumerate}
By Lemma \ref{quasi}, the former statement is equivalent to $\forall r \; \bigcup\{B \mid B$ is a Borel $\I_\ILF$-small set with code in $L[r]\} \in \I_\ILF$. The direction from 2 to 1 is thus immediate.

\p  Conversely, fix $r$ and assume that $\omega_1^{L[r]} = \omega_1$. Let $\mathcal{A}$ be an $(F^-)$-mad family such that $| \mathcal{A} \cap L[r] | = \omega_1$ (this can be done by extending an $(F^-)$-almost disjoint family of size $\omega_1$ in $L[r]$). %, and then extending it to a maximal one in $V$, noting that $\mathcal{A}'$ is still $(F^-)$-almost disjoint in $V$ (the statement ``$a \cap b \in F^-$'' is absolute). 
For every $a \in \mathcal{A} \cap L[r]$, $X_a$ is a Borel $\I_\ILF$-small set with code in $L[r]$. If 1 was true, then in $V$ there would be an $X \in \I_\ILF$ such that $X_a \subseteq X$ for all such $a$, contradicting Lemma \ref{Lefty}. \end{proof}

\begin{Remark} The same argument yields $\add(\I_\ILF) = \omega_1$ and $\cof(\I_\ILF) = \cont$ for analytic filters (where $\add$ and $\cof$ denote the \emph{additivity} and \emph{cofinality} numbers of the ideal, respectively). \end{Remark}

\medskip  Next, we consider $\DELTA^1_2(\ILF)$ and $\DELTA^1_2(\ILFF)$.
In \cite[Theorem 2]{BrendleUltrafilters}, the covering number of $\I_{\IL_U}$ for an ultrafilter $U$ was determined to be the minimum of $\bb$ and a certain combinatorial characteristic of $U$ called $\pi \mathfrak{p}(U)$. This was generalised by Hrusak and Minami in \cite[Theorem 2]{HrusakMinami} to arbitrary filters. Similar proofs yield characterisations of $\DELTA^1_2(\ILF)$ and $\DELTA^1_2(\ILFF)$.

% A similar yields an exact characterisation of $\DELTA^1_2(\ILFF)$ in terms of the notion of an $F$-\emph{separating real}.

\begin{Def} Let $M$ be a model of set theory and $F$ an analytic filter. We say that a real $C \in [\omega]^\omega$ is \begin{enumerate}

\item \emph{$F$-pseudointersecting over $M$} if $C \subseteq^* a$ for all $a \in F \cap M$.
\item \emph{$F$-separating over $M$} if it is $F$-pseudointersecting over $M$, and additionally, for all $b \in (F^+) \cap M$, $|C \cap b| = \omega$.\end{enumerate}
We use the shorthand  ``$\exists F$-${\sf pseudoint}$'' and ``$\exists F$-${\sf sep}$'' to abbreviate the statements ``$\forall r \in \ww \: \exists C \; (C$ is $F$-pseudointersecting/separating over $L[r]$)''. \end{Def}

\begin{Question} Are there natural regularity properties equivalent to  ``$\exists F$-${\sf pseudoint}$'' and ``$\exists F$-${\sf sep}$'' for $\DELTA^1_2$ sets of reals? \end{Question}

Recall that $\SIGMA^1_2(\IC)$ is equivalent to $\DELTA^1_2(\IC) \land \DELTA^1_2(\IL)$ and equivalent to $\DELTA^1_2(\ID)$, where $\IC, \IL$ and $\ID$ stand for the Baire property, Laver- and Hechler-measurability, respectively. Also, recall that $\DELTA^1_2(\IC)$ is equivalent to the existence of Cohen reals over $L[r]$, $\DELTA^1_2(\IL)$ is equivalent to the existence of dominating reals over $L[r]$, and $\DELTA^1_2(\ID)$ is equivalent to the existence of Hechler-generic reals over $L[r]$. See \cite[Theorem 4.1 and Theorem 5.8]{BrLo99}. Also, note that $\ID$ and $\IL$ are just $\IL_\Cof$ and $\IL_{\Cof^+}$.

%Next, we turn to $\DELTA^1_2(\ILF)$. By Corollary \ref{Cor1} and Lemma \ref{lemcohen}, we know that $\DELTA^1_2(\ILF)$ implies $\DELTA^1_2(\IL) \land %\DELTA^1_2(\IC)$, which is equivalent to  $\SIGMA^1_2(\IC)$, as well as $\DELTA^1_2(\ID)$ (for these equivalences, see XX and XX).

\begin{Thm} $\DELTA^1_2(\ILF) \; \Longleftrightarrow \; \SIGMA^1_2(\IC) \: \land \: \exists F$-${\sf sep}$. \end{Thm}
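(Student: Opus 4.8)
The plan is to prove the two directions separately, using the characterization of $\DELTA^1_2(\ILF)$ from Corollary \ref{equivalences}(1), namely that $\DELTA^1_2(\ILF)$ holds iff for every real $r$ there is an $\ILF$-generic real over $L[r]$. The key observation driving everything is that an $\ILF$-generic real $x$ over $M$ is, simultaneously, a dominating real over $M$ (since the generic stem limit dominates) \emph{and} yields a set-theoretic object combining Cohen-genericity with $F$-separation: the range of an $\ILF$-generic real should be $F$-separating over $M$, because genericity forces both that eventually the values land in any prescribed set in $F\cap M$ (giving $\ran(x)\subseteq^* a$, i.e., pseudointersection) and that $\ran(x)$ hits every $F^+$-set of $M$ infinitely often. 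So the right decomposition of ``$x$ is $\ILF$-generic over $M$'' is: ``$x$ is Cohen over $M$'' (or rather something equivalent to the conjunction giving $\SIGMA^1_2(\IC)$) plus ``$\ran(x)$ is $F$-separating over $M$''.

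\textbf{Forward direction ($\Rightarrow$).} Assume $\DELTA^1_2(\ILF)$. Since $\ILF$ adds a dominating real, $\DELTA^1_2(\ILF)$ implies $\DELTA^1_2(\IL)$ via the existence-of-dominating-reals characterization; and since $\ILF$ adds a Cohen real when $F$ is not an ultrafilter, and (when $F$ \emph{is} a nowhere dense ultrafilter one needs a separate argument, but note analytic filters are never ultrafilters by a classical theorem of Sierpi\'nski / Talagrand — an analytic filter has the Baire property, hence is not an ultrafilter), we get $\DELTA^1_2(\IC)$ as well. Hence, using the equivalence $\SIGMA^1_2(\IC)\Leftrightarrow \DELTA^1_2(\IC)\wedge\DELTA^1_2(\IL)$ recalled above, we obtain $\SIGMA^1_2(\IC)$. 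For $\exists F$-${\sf sep}$: fix $r$; by $\DELTA^1_2(\ILF)$ there is an $\ILF$-generic $x$ over $L[r]$; I claim $C := \ran(x)$ is $F$-separating over $L[r]$. For pseudointersection, given $a\in F\cap L[r]$, the set of conditions $T$ such that $\Succ_T(\sigma)\subseteq a$ for all $\sigma\in T$ above $\stem(T)$ is dense (intersect successor sets with $a$, still in $F$), so genericity gives $\ran(x)\subseteq^* a$. For the $F^+$-part, given $b\in F^+\cap L[r]$ and $n$, the set of $T$ with some $\sigma\in T$ of length $\geq n$ and $\sigma(|\sigma|-1)\in b$ is dense (any successor set in $F$ meets $b$ since $b\in F^+$), so $\ran(x)\cap b$ is infinite.

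\textbf{Reverse direction ($\Leftarrow$).} Assume $\SIGMA^1_2(\IC)\wedge\exists F$-${\sf sep}$. Fix $r$; I must produce an $\ILF$-generic real over $L[r]$, equivalently (by Lemma \ref{quasi}) a real avoiding every Borel $\I_\ILF$-small set coded in $L[r]$. The idea is to build the generic real by recursion along a cofinal $\omega$-sequence of dense open sets of $\ILF$ coded in $L[r]$ — but $L[r]$ may have $\omega_1^{L[r]}$ uncountable, so instead I reduce to countably many tasks. Here is where $\SIGMA^1_2(\IC)$ enters: by Theorem \ref{thistheo}'s proof method and the equivalence $\SIGMA^1_2(\IC)\Leftrightarrow\DELTA^1_2(\IC)\wedge\DELTA^1_2(\IL)$, and since $\SIGMA^1_2(\IC)$ is absolute enough, we may fix a real $s\in\ww$ with $r\in L[s]$ and $\omega_1^{L[s]}<\omega_1$ together with a real $C$ that is $F$-separating over $L[s]$ (using $\exists F$-${\sf sep}$ applied to $s$; one should check this can be arranged simultaneously — alternatively, work over $L[s]$ where $\omega_1$ is countable, enumerate all dense open subsets of $\ILF$ coded in $L[s]$ in order type $\omega$, which is possible in $V$ since $\omega_1^{L[s]}$ is countable). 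Now perform a fusion-style construction of a descending sequence $T_0\geq_0 T_1\geq_1\cdots$ meeting the $n$-th dense set, using the $F$-separating real $C$ to choose the successor sets: at each node, thin the available successor set to a subset of (a tail of) $C$; this is legitimate because $C$ being $F$-separating over $L[s]$ guarantees that $C\cap Z$ is $F$-positive-modulo... — more precisely, one uses that tails of $C$ are pseudointersections so that intersecting with them keeps things in $F$, while the $F^+$-clause of separation is what lets the construction continue meeting density requirements that demand hitting $L[s]$-coded $F^+$-sets. The resulting stem-limit $x$ has $\ran(x)\subseteq^* C$ and is $\ILF$-generic over $L[s]$, hence over $L[r]$.

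\textbf{Main obstacle.} The delicate point is the reverse direction: coordinating $\SIGMA^1_2(\IC)$ and $\exists F$-${\sf sep}$ on the \emph{same} inner model $L[s]$, and then showing the fusion construction using the separating real $C$ actually meets all the dense sets of $\ILF$ coded in $L[s]$. The issue is that a dense set $D\subseteq\ILF$ in $L[s]$ may require, at a given node $\sigma$, that the successor set belong to some specific $Z_\sigma\in F$ computed in $L[s]$; one must verify that thinning to $C$ (or a tail) is compatible with landing in $Z_\sigma$ — this is exactly where one needs the rank-function machinery of Definition \ref{rankdef} and an argument parallel to the Sublemma in Lemma \ref{Lefty}, combined with the $F$-separating property ($C\cap b$ infinite for $b\in F^+\cap L[s]$, plus a mad-family argument in $L[s]$). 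I expect this coordination — stating and proving the right ``$C$-thinned rank-reduction'' lemma — to be the technically substantial part; the forward direction, by contrast, is essentially a density-argument bookkeeping exercise plus the classical fact that analytic filters are not ultrafilters.
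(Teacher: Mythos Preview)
Your forward direction is correct and essentially matches the paper: $\DELTA^1_2(\ILF)$ gives dominating reals and Cohen reals (the latter because an analytic filter has the Baire property and so is never an ultrafilter), whence $\SIGMA^1_2(\IC)$; and a density argument shows $\ran(x)$ is $F$-separating for $\ILF$-generic $x$.

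The reverse direction, however, has a genuine gap. You want to enumerate the dense subsets of $\ILF$ coded in $L[s]$ in order type $\omega$, and for this you assume you can find $s$ with $\omega_1^{L[s]}<\omega_1$. But $\SIGMA^1_2(\IC)$ does \emph{not} imply $\forall r\,(\omega_1^{L[r]}<\omega_1)$: after adding a single Hechler real to $L$ one has $\SIGMA^1_2(\IC)$ (indeed $\DELTA^1_2(\ID)$), yet $\omega_1^{L[s]}=\omega_1$ for every real $s$ since Hechler forcing is ccc. So your reduction to countably many dense sets collapses, and with it the fusion plan. (There is also a secondary issue: $\ILF$ does not satisfy the $\leq_n$-fusion property when $F$ is not an ultrafilter, cf.\ Lemma~\ref{b} and the remark after it; so a genuine Axiom~A fusion is not available here anyway.)

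The paper sidesteps the $\omega_1$ problem entirely by \emph{not} enumerating dense sets. Instead it observes that if $C$ is $F$-separating over $L[r]$, then the map $T\mapsto T\cap C^{<\omega}$ sends $\ILF$-conditions with stem in $C^{<\omega}$ to conditions in the Hechler forcing $\ID_C$ on $C^\omega$; and using the rank function $\rk_D$ together with the separating property of $C$ one shows that for every $\ILF$-dense $D\in L[r]$, the set $D':=\{T\cap C^{<\omega}\mid T\in D,\ \ran(\stem(T))\subseteq C\}$ is predense in $\ID_C$. Now $\SIGMA^1_2(\IC)$ is equivalent to $\DELTA^1_2(\ID)$, so there is a real $d\in C^\omega$ which is $\ID_C$-generic over $L[r][C]$; this $d$ then meets every $D'$, hence every $D$, and is therefore $\ILF$-generic over $L[r]$. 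The crucial point is that the uncountable family of dense sets in $L[r]$ is handled \emph{all at once} by a single Hechler-generic, rather than one at a time by a fusion. Your ``$C$-thinned rank-reduction'' intuition is exactly the argument showing $D'$ is predense, but it should be deployed to transfer density to $\ID_C$, not to drive a direct diagonalization.
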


\begin{proof} By Corollary \ref{cor1} and Lemma \ref{lemcohen}, we know that $\DELTA^1_2(\ILF)$ implies $\DELTA^1_2(\IL)$ and $\DELTA^1_2(\IC)$, which in turns implies  $\SIGMA^1_2(\IC)$ as mentioned above.  Moreover, a standard density argument shows that if $\ILF$ generically adds an $F$-separating real, specifically, if $x$ is $\ILF$-generic  then $\ran(x)$ is $F$-separating.

\p For the converse direction, fix $r \in \ww$ and let $C$ be $F$-separating over $L[r]$. Let $\ID_C$ denote Hechler forcing as defined on $C^\omega$ (i.e., the conditions are trees in $C^{<\omega}$ with branching into all of $C$ except for finitely many points). Clearly $\ID_C$ is isomorphic to the ordinary Hechler forcing. Notice that for every $T \in \ILF$, if $\ran(\stem(T)) \subseteq C$ then $T \cap C^{<\omega} \in \ID_C$. 

\p For every $D \in L[r]$ dense in $\ILF$, let $D' := \{T \cap  C^{<\omega} \mid T \in D$ and $\ran(\stem(T)) \subseteq C\}$. We claim that $D'$ is predense in $\ID_C$. Let $S \in \ID_C$ be arbitrary, with $\sigma := \stem(S)$. Recall the rank-function from Definition \ref{rankdef}. Since $D \in L[r]$, we consider the rank function $\rk_D$ as defined inside $L[r]$. If $\rk_D(\sigma) = 0$ then there is $T \in D$ with $\stem(T) = \sigma$, hence $S$ and $T$ are compatible. Otherwise, let $\rk_D(\sigma) = \alpha$. By definition of $\rk_D$ and the fact that $\rk_D$ is in $L[r]$, there exists $Z \in F^+$ with $Z \in L[r]$, such that $\rk_D(\sigma \cc \left<n\right>) < \alpha$ for all $n \in Z$. Since $\Succ_S(\sigma) \cap Z$ is also in $\F^+$, by assumption, there is $n \in C \cap \Succ_S(\sigma) \cap Z$. Continuing  this process, we arrive at some $\tau$ extending $\sigma$, such that $\tau \in S$, $\ran(\tau) \subseteq C$ and $\rk_D(\tau) = 0$. Then we are done as before.

\p By the remark above, $\SIGMA^1_2(\IC)$ implies $\DELTA^1_2(\ID)$, which implies the existence of Hechler-generic reals. In particular, there is a $d \in C^\omega$ which is $\ID_C$-generic real over $L[r][C]$. But then $d$ is $\ILF$-generic over $L[r]$, since for every $D \in L[r]$ dense in $\ILF$, we find $T \in D$ with $d \in [T \cap  C^{<\omega}]$. \end{proof}

A similar argument can be used to simplify $\DELTA^1_2(\ILFF)$; however, here the \emph{homogeneity} of $\ILFF$ provides an additional obstacle, since $\ILFF$ is, in general, only homogeneous if $F$ is $K$-uniform. %Fis and $\ILFF$, which is, in general, not guaranteed. Therefore, we will need to add assumptions regarding the homogeneity, in order to guarantee one direction of the equivalences. 

\begin{Thm} $\DELTA^1_2(\ILFF)\;  \Longrightarrow \; \DELTA^1_2(\IL) \land \exists F$-${\sf pseudoint}$. If $F$ is $K$-uniform, then the converse implication holds. \end{Thm}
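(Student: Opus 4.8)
The plan is to mirror the structure of the preceding $\DELTA^1_2(\ILF)$ theorem, replacing $F$-separating reals by $F$-pseudointersecting reals and Hechler forcing by Laver forcing. For the forward direction, fix $r \in \ww$ and suppose $\DELTA^1_2(\ILFF)$ holds. By Corollary \ref{cor1} we get $\DELTA^1_2(\IL)$ immediately. For $\exists F$-${\sf pseudoint}$, the strategy is a density argument: I would show that the $\ILFF$-generic real $x_G$ has the property that $\ran(x_G)$ (or rather $\{x_G(n) \mid n \geq k\}$ for a suitable tail) is $F$-pseudointersecting over the ground model. Concretely, for each $a \in F$ the set of conditions $T$ whose stem already ``commits'' to staying inside $a$ from some point on is dense: given $T \in \ILFF$ with stem $\sigma$, since $\Succ_T(\tau) \in F^+$ for every $\tau \supseteq \sigma$ and $a \in F$, we have $\Succ_T(\tau) \cap a \in F^+$, so we may prune $T$ to $T' \leq T$ with $\Succ_{T'}(\tau) \subseteq a$ for all $\tau \supseteq \sigma$; then every branch of $T'$ lands in $a$ past $|\sigma|$. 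Hence $\ran(x_G) \subseteq^* a$. Then by Corollary \ref{equivalences}(3) applied with the trivial tree, $\DELTA^1_2(\ILFF)$ gives an actual $F$-dominating real over $L[r]$ in $\ww$, whose range is $F$-pseudointersecting over $L[r]$; this yields $\exists F$-${\sf pseudoint}$.

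For the converse, assume $F$ is $K$-uniform, and assume $\DELTA^1_2(\IL) \land \exists F$-${\sf pseudoint}$. Fix $r \in \ww$; by homogeneity of $\ILFF$ (valid here since $F$ is $K$-uniform — this is exactly the point flagged before the theorem) it suffices, via Corollary \ref{equivalences}(3), to produce a single $F$-dominating real over $L[r]$. Let $C \in [\omega]^\omega$ be $F$-pseudointersecting over $L[r]$. Let $\IL_C$ denote Laver forcing with branching sets from the filter $F \till C$ (a filter on $C$, which by $K$-uniformity is $\leq_K F$, and is in fact isomorphic via the bijection argument of \cite[Lemma 3]{MediniZdomskyy}, after adjusting, to a standard Laver-type forcing, so that $\DELTA^1_2(\IL)$-style genericity applies). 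The key lemma — paralleling the predensity argument in the previous proof — is that for every dense $D \in L[r]$ in $\ILFF$, the set $D' := \{T \cap C^{<\omega} \mid T \in D, \ran(\stem(T)) \subseteq C\}$ is predense in $\IL_C$: given $S \in \IL_C$ with stem $\sigma$, run the rank function $\rk_D$ (Definition \ref{rankdef}) computed in $L[r]$; if $\rk_D(\sigma) = 0$ we are done, and otherwise a witness $Z \in F^+ \cap L[r]$ satisfies $\Succ_S(\sigma) \cap Z \in (F \till C)^+$, because $C$ being $F$-pseudointersecting over $L[r]$ forces $C \cap Z$ to remain $F$-large (here one must be slightly careful: pseudointersection alone gives $C \subseteq^* a$ for $a \in F$, and one needs $C \cap Z \in F^+$ for $Z \in F^+ \cap L[r]$ — this is where the argument is more delicate than in the separating case and I expect to need $C \cap b \in F^+$ for positive $b$, which is precisely the separating property, not mere pseudointersection). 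Picking such an $n \in C \cap \Succ_S(\sigma) \cap Z$ and iterating, we descend to a $\tau \supseteq \sigma$ with $\ran(\tau) \subseteq C$, $\tau \in S$, and $\rk_D(\tau) = 0$. Then any $\IL_C$-generic $x$ over $L[r][C]$ is $F$-dominating over $L[r]$, and $\DELTA^1_2(\IL)$ supplies such an $x$.

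\textbf{Main obstacle.} The delicate point is exactly the one I flagged: in the separating case the predensity argument used that $C \cap b \in F^+$ for $b \in F^+$, whereas here I only have $C$ pseudointersecting. I expect the resolution to be that $K$-uniformity, together with the homogeneity reduction, lets one replace $F$ by $F \till C$ and re-run the argument in a setting where the relevant positivity is automatic — i.e., one works with $F \till C$ as the new filter and $C$ itself as the ``full'' set — so that $\DELTA^1_2(\IL_{F \till C})$-genericity over $L[r][C]$ is what is actually needed, and this follows from $\DELTA^1_2(\IL)$ via the isomorphism of $\IL_{F\till C}$ with a standard Laver-type forcing. Sorting out this reduction cleanly, and checking that pseudointersection (not separation) is genuinely enough once one passes to $F \till C$, is the crux; everything else is routine adaptation of the previous theorem.
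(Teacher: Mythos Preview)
Your forward direction is essentially the paper's: an $F$-dominating real $x$ over $L[r]$ (supplied by Corollary~\ref{equivalences}(3)) has $\ran(x)$ $F$-pseudointersecting over $L[r]$, since for each $a \in F \cap L[r]$ the function $\varphi(\sigma) := a \setminus |\sigma|$ forces $x(n) \in a$ and $x(n) \geq n$ for large $n$. The density argument for generic reals is a detour you never actually use.

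For the converse you have a genuine gap, and the paper takes a completely different route. You correctly diagnose the obstacle in the rank/predensity strategy: descending the rank requires some $n \in C \cap Z$ for $Z \in F^+ \cap L[r]$, and pseudointersection says nothing about $C \cap Z$ for merely positive $Z$. Your proposed fix via $F \till C$ does not help. First, $C$ need not lie in $F^+$ at all: pseudointersection over $L[r]$ only constrains $C$ against $F \cap L[r]$, so $\omega \setminus C$ may well lie in $F \setminus L[r]$, and then $F \till C$ is undefined. Second, even when $C \in F^+$, one has $(F \till C)^+ = F^+ \cap \mathcal{P}(C)$, so asking $C \cap Z \in (F \till C)^+$ is literally the separating condition you lack. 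Third, $\IL_{(F \till C)^+}$ is not isomorphic to ordinary $\IL$, so $\DELTA^1_2(\IL)$ does not hand you its generics. The paper abandons rank functions entirely and uses pseudointersection \emph{directly}: since every value $\varphi(\sigma)$ lies in $F \cap L[r]$, we have $C \subseteq^* \varphi(\sigma)$, so $g_\varphi(\sigma) := \min\{n : C \setminus n \subseteq \varphi(\sigma)\}$ is well-defined, and any $x$ with $x(n) \in C$ and $x(n) \geq g_\varphi(x \till n)$ automatically satisfies $x(n) \in \varphi(x \till n)$. Then $\DELTA^1_2(\IL)$ produces a single $g$ dominating all the $g_\varphi$, and one takes any $x$ with $x(n) \in C$, $x(n) \geq g(x \till n)$. $K$-uniformity enters only in the homogeneity step (a continuous $f: \ww \to [T]$ pulling back $\D_\FF$-small sets to $\D_\FF$-small sets), reducing the problem to the existence of a single $F$-dominating real.
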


\begin{proof} By Corollary \ref{cor1} we know that $\DELTA^1_2(\ILFF)  \Rightarrow \DELTA^1_2(\IL)$. Let $x$ be $F$-dominating over $L[r]$ and let $C := \ran(x)$. For each  $a \in F \cap L[r]$  let $\varphi$ be the function given by $\varphi(\sigma) := a \setminus |\sigma|$ for all $\sigma \in \wlw$.  Since $\forall^\infty n \:(x(n) \in \varphi(x \till n))$, clearly $C$ is infinite and $C \subseteq^* a$.

\p Conversely, assume that $F$ is $K$-uniform. We leave it to the reader to verify that, if $T \in \ILFF$, then there exists a continuous function $f: \ww \to [T]$ such that $f$-preimages of $\D_\FF$-small sets are $\D_\FF$-small. In particular, the statements \begin{itemize}

\item $\exists x \: (x$ is $F$-dominating over $L[r])$, and
\item for all $T \in \ILFF \; \exists x \in [T] \;  (x $ is $F$-dominating over $L[r])$ \end{itemize}
are equivalent for all $T \in \ILFF \cap L[r]$.

\p So, fix $r \in \ww$ and let $T \in \ILFF$ be arbitrary. Without loss of generality, assume $T \in L[r]$ (otherwise, use $L[r][T]$ instead). Let $C$ be $F$-pseudointersecting over $L[r]$. For each $\varphi: \wlw \to F$ from $L[r]$, define $g_\varphi: \wlw \to \omega$ by $g_\varphi(\sigma) := \min \{ n \min C \setminus n \subseteq \varphi(\sigma)\}$. Then all $g_\varphi$ are in $L[r]$, by $\DELTA^1_2(\IL)$ there is a dominating real $g$ over $L[r]$, so, in particular, $g$ dominates all $g_\varphi$. Let $x \in \ww$ be such that $x(n) \in C$ and $x(n) \geq g(x \till n)$ for every $n$. Clearly for every $\varphi \in L[r]$ we have $\forall^\infty n \; (x(n) \in \varphi(x \till n))$, hence $x$ is $F$-dominating over $L[r]$. This suffices by what we mentioned above.
\end{proof}

Currently, we do not have a similarly elegant characterization for $\SIGMA^1_2(\ILFF)$. 

\begin{Question} Is there a characterization of $\SIGMA^1_2(\ILFF)$ similar to the above? Is $\SIGMA^1_2(\ILF)$ equivalent to $\DELTA^1_2(\ILFF)$? \end{Question}

\bibliographystyle{alpha}
\bibliography{../../10_Bibliography/Khomskii_Master_Bibliography}{}

\begin{thebibliography}{GRSS95}

\bibitem[BHL05]{BrHaLo}
J{\"o}rg Brendle, Lorenz Halbeisen, and Benedikt L{\"o}we.
\newblock Silver measurability and its relation to other regularity properties.
\newblock {\em Math. Proc. Cambridge Philos. Soc.}, 138(1):135--149, 2005.

\bibitem[BJ95]{BaJu95}
Tomek Bartoszy\'nski and Haim Judah.
\newblock {\em Set Theory, On the Structure of the Real Line}.
\newblock A\ K\ Peters, 1995.

\bibitem[BL99]{BrLo99}
J{\"o}rg Brendle and Benedikt L{\"o}we.
\newblock Solovay-type characterizations for forcing-algebras.
\newblock {\em J. Symbolic Logic}, 64(3):1307--1323, 1999.

\bibitem[BS99]{BrendleUltrafilters}
J{\"o}rg Brendle and Saharon Shelah.
\newblock Ultrafilters on {$\omega$}---their ideals and their cardinal
  characteristics.
\newblock {\em Trans. Amer. Math. Soc.}, 351(7):2643--2674, 1999.

\bibitem[De{\v{c}}15]{Deco}
Michal De{\v{c}}o.
\newblock Strongly unbounded and strongly dominating sets of reals generalized.
\newblock {\em Arch. Math. Logic}, 54(7-8):825--838, 2015.

\bibitem[DR13]{RepickyDeco}
Michal De{\v{c}}o and Miroslav Repick{\'y}.
\newblock Strongly dominating sets of reals.
\newblock {\em Arch. Math. Logic}, 52(7-8):827--846, 2013.

\bibitem[FFK14]{CichonPaper}
Vera Fischer, Sy~David Friedman, and Yurii Khomskii.
\newblock Cicho\'n's diagram, regularity properties and {$\Delta^1_3$} sets of
  reals.
\newblock {\em Arch. Math. Logic}, 53(5-6):695--729, 2014.

\bibitem[FKV]{KhomskiiBarnabas}
Barnabas Farkas, Yurii Khomskii, and Zolt{\'a}n Vidy{\'a}nsky.
\newblock Almost disjoint refinements and mixing reals.
\newblock Preprint 2015 (arXiv:1510.05699 [math.LO]).

\bibitem[Gro87]{Groszek}
Marcia~J. Groszek.
\newblock Combinatorics on ideals and forcing with trees.
\newblock {\em J. Symbolic Logic}, 52(3):582--593, 1987.

\bibitem[GRSS95]{GoldsternGame}
Martin Goldstern, Miroslav Repick{\'y}, Saharon Shelah, and Otmar Spinas.
\newblock On tree ideals.
\newblock {\em Proc. Amer. Math. Soc.}, 123(5):1573--1581, 1995.

\bibitem[HM14]{HrusakMinami}
Michael Hru{\v{s}}{\'a}k and Hiroaki Minami.
\newblock Mathias-{P}rikry and {L}aver-{P}rikry type forcing.
\newblock {\em Ann. Pure Appl. Logic}, 165(3):880--894, 2014.

\bibitem[Ike10]{Ik10}
Daisuke Ikegami.
\newblock Forcing absoluteness and regularity properties.
\newblock {\em Ann. Pure Appl. Logic}, 161(7):879--894, 2010.

\bibitem[Kec95]{Kechris}
Alexander~S. Kechris.
\newblock {\em Classical descriptive set theory}, volume 156 of {\em Graduate
  Texts in Mathematics}.
\newblock Springer-Verlag, New York, 1995.

\bibitem[Kho12]{KhomskiiThesis}
Yurii Khomskii.
\newblock {\em Regularity Properties and Definability in the Real Number
  Continuum.}
\newblock PhD thesis, University of Amsterdam, 2012.
\newblock ILLC Dissertations DS-2012-04.

\bibitem[KL15]{KhomskiiLaguzzi1}
Yurii Khomskii and Giorgio Laguzzi.
\newblock Full-splitting miller trees and infinitely often equal reals.
\newblock preprint, 2015.

\bibitem[Lag14]{LaguzziPaper}
Giorgio Laguzzi.
\newblock On the separation of regularity properties of the reals.
\newblock {\em Arch. Math. Logic}, 53(7-8):731--747, 2014.

\bibitem[Lou76]{LouveauIdeal}
Alain Louveau.
\newblock Une m\'ethode topologique pour l'\'etude de la propri\'et\'e de
  {R}amsey.
\newblock {\em Israel J. Math.}, 23(2):97--116, 1976.

\bibitem[LR95]{LabedzkiRepicky}
Grzegorz Labedzki and Miroslav Repicky.
\newblock Hechler reals.
\newblock {\em J. Symbolic Logic}, 60(2):444--458, 06 1995.

\bibitem[Mil]{MillerMesses}
Arnold Miller.
\newblock {H}echler and {L}aver {T}rees.
\newblock Preprint 2012 (arXiv:1204.5198 [math.LO]).

\bibitem[MZ16]{MediniZdomskyy}
Andrea Medini and Lyubomyr Zdomskyy.
\newblock Every filter is homeomorphic to its square.
\newblock {\em Bull. Polish Acad. Sci. Math.}, 64(1):63--67, 2016.

\bibitem[Sab12]{MarcinSabok}
Marcin Sabok.
\newblock Complexity of ramsey null sets.
\newblock {\em Adv. Math.}, 230(3):1184--1195, 2012.

\bibitem[Zap03]{IsolatingCardinals}
Jind{\v{r}}ich Zapletal.
\newblock Isolating cardinal invariants.
\newblock {\em J. Math. Log.}, 3(1):143--162, 2003.

\end{thebibliography}

\end{document}